\documentclass[11pt]{article}
\usepackage{geometry}
\usepackage{moreverb}
\usepackage{graphicx}
\usepackage{caption,subcaption}
\usepackage{float}
\usepackage{amsfonts}
\usepackage{amsmath}
\usepackage{amsthm}
\usepackage{dsfont}
\usepackage{amssymb}
\usepackage{bbm}
\usepackage{booktabs}
\usepackage{threeparttable}
\usepackage[numbers,sort&compress]{natbib}
\usepackage{mathrsfs}
\usepackage{epstopdf}
\usepackage{mathtools}
\usepackage{bm}
\usepackage{relsize}
\usepackage{hyperref}
\hypersetup{
	colorlinks=true,
	citecolor=blue,
	linkcolor=blue,
}

\usepackage{authblk}
\usepackage{tikz}
\usetikzlibrary{arrows.meta, decorations.markings, calc}

\usepackage{booktabs} 
\usepackage{multirow}

\newcommand\keywords[1]{\textit{Keywords}: #1}
\newcommand\AMS[1]{\textit{MSC}: #1}

\theoremstyle{plain}
\newtheorem{theorem}{Theorem}[section]
\newtheorem{prop}{Proposition}[section]

\newtheorem{lemma}{Lemma}[section]

\theoremstyle{definition}

\newtheorem*{notations}{Notations}
\newtheorem{remark}{Remark}[section]

\theoremstyle{remark}

\numberwithin{equation}{section} 

\allowdisplaybreaks

\newcommand{\R}{\mathbb{R}}

\renewcommand{\d}{\mathrm{d}} 

\begin{document}
	
	\title{Strong ill-posedness for the MHD system in the supercritical regime: inviscid and viscous}
	\author[a]{Zhenyu Wan}
	\author[b]{Weikui Ye}
	\author[a]{Zhaoyang Yin}
	\affil[a]{School of Science, Shenzhen Campus of Sun Yat-Sen University, Shenzhen, 518107, China}
	\affil[b]{School of Mathematical Sciences, Shenzhen University, Shenzhen, 518060, China}

	\renewcommand*{\Affilfont}{\small\it} 
	\renewcommand\Authands{, }
	\date{}
	\maketitle

    \begingroup
    \renewcommand\thefootnote{}
    \footnotetext{Email addresses:
	wanzhy9@mail2.sysu.edu.cn (Z. Wan),
	904817751@qq.com (W. Ye),
	mcsyzy@mail.sysu.edu.cn (Z. Yin).}%
    \addtocounter{footnote}{-1}
    \endgroup
	
	\begin{abstract}
	This paper is concerned with the Cauchy problem for the 3D incompressible magnetohydrodynamic (MHD) equations in supercritical Sobolev spaces. It is well known that the system is locally well-posed in subcritical Sobolev spaces, whereas the supercritical regime remains largely open. In this work, we establish norm inflation for the incompressible MHD equations, both with and without Laplacian dissipation, in supercritical Sobolev spaces, thereby revealing strong ill-posedness of the system at this regularity level. A distinctive feature of our approach is the introduction of a novel geometric construction, termed the ``Magnetic-solo ansatz'', through which, for the ideal MHD system, norm inflation occurs exclusively in the magnetic field $b$ in $H^s$ with $0<s<\frac{5}{2}$, while the $H^s$-norm of the velocity field $u$ remains uniformly bounded. This asymmetric behavior shows that supercritical ill-posedness can be driven exclusively by the magnetic field, highlighting its essential role in the breakdown of well-posedness. Our findings fill a significant gap in the supercritical regularity theory for incompressible MHD and shed light on the distinct mechanisms governing the fluid and magnetic dynamics.
	\end{abstract}

	\AMS{35Q35, 35B44, 35R25}
	
	\keywords{MHD equations; Ill-posedness; Magnetic-solo ansatz}

	\section{Introduction}
	The dynamics of electrically conducting fluids interacting with magnetic fields are governed by the incompressible magnetohydrodynamics (MHD) equations. This system is of fundamental physical importance and arises in a wide range of applications, from magnetic reconnection in astrophysical plasmas to plasma confinement in thermonuclear fusion devices (see, e.g., \cite{biskamp1993nonlinear,priest2000magnetic}). 
	
	In this paper, we study the following incompressible MHD equations in $\R^3$:
	\begin{equation}\label{mhd-4}
	\begin{cases}
	\partial_t u -\varepsilon_1 \Delta u + u \cdot \nabla u + \nabla p = b \cdot \nabla b, \\
	\partial_t b - \varepsilon_2 \Delta b+ u \cdot \nabla b = b \cdot \nabla u, \\
	\text{div } u = \text{div } b = 0,
	\end{cases}
	\end{equation}
	where $u=(u_1,u_2,u_3)$ denotes the velocity field, $b=(b_1,b_2,b_3)$ represents the magnetic field, and $p$ is the scalar pressure. The parameters $\varepsilon_1\ge 0$ and $\varepsilon_2\ge 0$ represent the  viscosity coefficient and magnetic diffusivity, respectively.
	
	According to whether the viscosity and magnetic diffusion are present, system \eqref{mhd-4} reduces to four distinct cases. More precisely, we will systematically investigate the following four systems:
	
	\vspace{2mm} 
	\noindent \textbf{(I) The ideal MHD equations} ($\varepsilon_1 = \varepsilon_2= 0$):
    \begin{equation}\begin{cases}\label{ideal mhd}
    \partial_{t}u +u\cdot \nabla u+ \nabla p = b\cdot \nabla b ,\\
    \partial_{t}b + u\cdot \nabla b = b\cdot \nabla u,\\
    \operatorname{div}u = \operatorname{div}b = 0.
    \end{cases}
    \end{equation}
    
    \noindent \textbf{(II) The non-resistive MHD equations} ($\varepsilon_1=1, \varepsilon_2=0$):
    \begin{equation}\begin{cases}\label{non-resistive mhd}
    \partial_{t}u -\Delta u+ u\cdot \nabla u+ \nabla p = b\cdot \nabla b ,\\
    \partial_{t}b + u\cdot \nabla b = b\cdot \nabla u,\\
    \operatorname{div}u = \operatorname{div}b = 0.
    \end{cases}
    \end{equation}
    
    \noindent \textbf{(III) The non-viscous MHD equations} ($\varepsilon_1= 0, \varepsilon_2=1$):
    \begin{equation}\begin{cases}\label{non-viscous mhd}
    \partial_{t}u + u\cdot \nabla u+ \nabla p = b\cdot \nabla b ,\\
    \partial_{t}b -\Delta b+ u\cdot \nabla b = b\cdot \nabla u,\\
    \operatorname{div}u = \operatorname{div}b = 0.
    \end{cases}
    \end{equation}

    \noindent \textbf{(IV) The viscous and resistive MHD equations} ($\varepsilon_1= \varepsilon_2=1$):
    \begin{equation}\begin{cases}\label{viscous-resistive mhd}
    \partial_{t}u -\Delta u+ u\cdot \nabla u+ \nabla p = b\cdot \nabla b ,\\
    \partial_{t}b -\Delta b+ u\cdot \nabla b = b\cdot \nabla u,\\
    \operatorname{div}u = \operatorname{div}b = 0.
    \end{cases}
    \end{equation}
    
    	\subsection{Background: The ideal MHD equations}
    The well-posedness theory and blow-up criteria for the ideal MHD equations have been extensively studied. For instance, Beale--Kato--Majda type blow-up criteria for smooth solutions have been established via losing estimates \cite{cannone2007losing}. Regarding the well-posedness threshold, Chen, Miao, and Zhang \cite{chen2010well} established local well-posedness and blow-up criteria in the Triebel--Lizorkin spaces $F^s_{p,q}(\R^d)$ for $s>\frac{d}{p}+1$. By the identification $F^s_{2,2}(\R^3)=H^s(\R^3)$, their result yields classical local well-posedness in $H^s(\R^3)\times H^s(\R^3)$ for $s>\frac{5}{2}$.
    
    When the magnetic field vanishes $(b=0)$, the system reduces to the incompressible Euler equations. For the Euler equations, strong ill-posedness in critical Sobolev spaces was established by Bourgain and Li \cite{bourgain2015strong}. Recently, Luo \cite{luo2024illposedness} established supercritical ill-posedness for the 3D Euler equations in $0<s<\frac{5}{2}$ utilizing a vortex ring.
    
    However, for the ideal MHD system, the presence of the magnetic field gives rise to significantly more intricate nonlinear coupling effects. While the global well-posedness for the ideal MHD equations has been widely investigated under specific background magnetic fields \cite{cai2018global,he2018global,wei2017global,wei2018global}, the behavior of strong solutions below the classical Sobolev threshold remains unresolved. \textbf{In view of the local well-posedness result of Chen, Miao, and Zhang \cite{chen2010well}, it has been an open problem whether the ideal MHD equations remain well-posed in the supercritical Sobolev regime below $s=\frac{5}{2}$. In this paper, we give a negative answer to this question by proving strong ill-posedness in $H^s(\R^3) \times H^s(\R^3)$ for $0<s<\frac{5}{2}$.}

    \subsection{Background: The dissipative variants of MHD equations}
    The study of the MHD equations under various combinations of kinematic viscosity and magnetic diffusivity has been the subject of extensive research.
    \subsubsection{Non-resistive MHD}
   When the magnetic field vanishes $(b=0)$, system
   \eqref{non-resistive mhd} reduces to the incompressible
   Navier--Stokes equations. Bourgain and Pavlovi\'{c} \cite{bourgain2008ill} established norm inflation in the largest critical space $\dot{B}^{-1}_{\infty,\infty}$. More recently, Luo \cite{luo2024illposedness,luo2025sharp} established strong ill-posedness for the 3D Navier--Stokes equations in $H^s$ for $0<s<\frac12$ and in almost all supercritical homogeneous Besov spaces $\dot{B}^s_{p,q}$ near the scaling-critical line. In a related recent work, Chen and Xie \cite{chen2026norm} established norm inflation for the two-dimensional inviscid and fully dissipative Boussinesq systems in supercritical Besov spaces.  When the magnetic field is coupled, the lack of magnetic diffusion makes the non-resistive MHD system essentially hyperbolic regarding the magnetic field, introducing severe analytical challenges. The well-posedness theory of \eqref{non-resistive mhd} has been extensively studied. For instance, global classical solutions in 3D for small initial data were established in \cite{xu2015global,abidi2017global}. Concerning local well-posedness, Fefferman et al. \cite{fefferman2014higher,fefferman2017local} proved local existence and uniqueness. Indeed, via time-space mixed Besov spaces, one can generalize the well-posedness result to $H^s(\R^3)\times H^{s+1}(\R^3)$ for $s>\frac{1}{2}$. This result was further extended to the homogeneous Besov spaces $\dot{B}^{\frac{d}{p}-1}_{p,1}(\R^d) \times \dot{B}^{\frac{d}{p}}_{p,1}(\R^d)$ for $1\le p\le2d$ and $d\ge 2$ by Li, Tan, and Yin \cite{li2017local}. Moreover, for $d=2,3$, Li, Yin, and Zhu \cite{li2023non} showed that the data-to-solution map in $H^s(\R^d)\times H^s(\R^d)$ for $s>\frac d2$, is continuous but not uniformly continuous. Very recently, Chen, Nie, and Ye \cite{chen2024sharp} established the sharp ill-posedness in critical Sobolev spaces.

    Among the above works, the nearly optimal local well-posedness result of Fefferman et al. \cite{fefferman2017local} is particularly representative. \textbf{In the conclusion of their paper, they wrote that \textit{``it would be interesting to find a simpler model problem in which it is possible to demonstrate the failure of local existence for $b_0\in H^{s+1}$ and $u_0\in H^s$.''} In this paper, we provide a partial answer to the problem posed by Fefferman et al. in the supercritical regime by proving strong ill-posedness via norm inflation in $H^s(\R^3)\times H^{s+1}(\R^3)$ for $0<s<\frac12$.}

    \subsubsection{Non-viscous MHD}
    This system models resistive plasmas where fluid viscosity is negligible. In two dimensions, Wei and Zhang \cite{wei2020global} established the first global well-posedness result for the non-viscous MHD equations \eqref{non-viscous mhd} with small initial data. For the 3D case, Hassainia \cite{hassainia2022global} remarkably proved the global well-posedness for the MHD equations with axisymmetric initial data. It is also well known that the system is locally well posed in $H^{s}(\R^3)\times H^{s-1}(\R^3)$ for $s>\frac{5}{2}$, which can be established by adapting the argument used in Proposition 1.3 of Wu and Zhao \cite{wu2023mild}.

    Despite these positive results, the behavior of solutions below this regularity threshold remains largely open. \textbf{Recently, Wu and Zhao \cite{wu2023mild} established mild ill-posedness in two dimensions. However, in three dimensions, it has been an open problem whether the system is well-posed in supercritical Sobolev spaces. In this paper, we give a negative answer by proving strong ill-posedness in $H^s(\R^3)\times H^{s-1}(\R^3)$ for $1<s<\frac52$.}

    \subsubsection{Viscous and resistive MHD}
    In this system, both kinematic viscosity and magnetic diffusion are present. The local and global well-posedness theory has been extensively developed since the pioneering works of Duvaut and Lions \cite{duvaut1972inequations} and Sermange and Temam \cite{sermange1983some}. In particular, local well-posedness is classically known to hold in $H^s(\R^3)\times H^s(\R^3)$ for $s\ge\frac{1}{2}$. Furthermore, the existence of Leray-Hopf weak solutions to the MHD equations was proved by Wu \cite{wu2003generalized}. For strong solutions, Beale--Kato--Majda type blow-up criteria were subsequently established by Chen, Miao, and Zhang \cite{chen2007beale} via the velocity vorticity only.

    On the other hand, the non-uniqueness of weak solutions to this system has been intensely studied via the convex integration method. Remarkably, Li, Zeng, and Zhang \cite{li2022non} established the non-uniqueness of weak solutions for the 3D viscous and resistive MHD equations, including the hyper viscous and resistive case up to the Lions exponent $\frac{5}{4}$. Subsequently, the authors \cite{li2024sharp} extended this non-uniqueness result to the hyper viscous and resistive MHD beyond the Lions exponent.
   
    While these works reveal ill-posedness through the non-uniqueness of weak solutions, the occurrence of norm inflation for strong solutions in supercritical Sobolev spaces reflects a fundamentally different mechanism. The classical well-posedness and weak-solution theory was developed by Duvaut and Lions \cite{duvaut1972inequations}, Sermange and Temam \cite{sermange1983some}, and Wu \cite{wu2003generalized}. \textbf{However, it has remained unsolved whether the system \eqref{viscous-resistive mhd} is well-posed in supercritical Sobolev spaces. In this paper, we also give a negative answer by proving strong ill-posedness in $H^s(\R^3)\times H^s(\R^3)$ for $0<s<\frac12$.}

     The presence or absence of the dissipative terms $\Delta u$ and $\Delta b$ leads to different Sobolev regularity thresholds for local well-posedness. We summarize the corresponding indices $(s_u,s_b)$ for the four 3D MHD systems in Table \ref{tab:mhd_critical_indices}.
     
     \begin{table}[H]
     	\centering
     	\renewcommand{\arraystretch}{1.5} 
     	\begin{tabular}{@{}l c c c c@{}}
     		\toprule
     		\multirow{2}{*}{\textbf{MHD System}} & \multicolumn{2}{c}{\textbf{Dissipative Terms}} & \multicolumn{2}{c}{\textbf{Critical Indices}} \\
     		\cmidrule(lr){2-3} \cmidrule(l){4-5} 
     		& \textbf{Velocity} & \textbf{Magnetic} & $\mathbf{s_u}$ & $\mathbf{s_b}$ \\
     		\midrule
     		Ideal MHD             & $0$        & $0$        & $\frac{5}{2}$ & $\frac{5}{2}$ \\
     		Non-resistive MHD     & $\Delta u$ & $0$        & $\frac{1}{2}$ & $\frac{3}{2}$ \\
     		Non-viscous  MHD& $0$        & $\Delta b$ & $\frac{5}{2}$ & $\frac{3}{2}$ \\
     		Viscous and resistive MHD & $\Delta u$ & $\Delta b$ & $\frac{1}{2}$ & $\frac{1}{2}$ \\
     		\bottomrule
     	\end{tabular}
     	\vspace{0.2cm}
     	\caption{Critical Sobolev indices $(s_u, s_b)$ for the local well-posedness of the 3D MHD systems in $H^{s_u}(\R^3)\times H^{s_b}(\R^3)$ under different combinations of dissipative terms.}
     	\label{tab:mhd_critical_indices}
     \end{table}

     \subsection{Main results}
 
     As shown in Table \ref{tab:mhd_critical_indices}, the equations are locally well-posed above their respective critical thresholds. However, it has remained an open problem whether the 3D incompressible MHD equations are still well-posed in the supercritical Sobolev regimes below these thresholds.

     Our main results provide a negative answer to this question by establishing strong ill-posedness in supercritical Sobolev spaces for both the ideal and dissipative MHD equations. We first state the main result for the ideal case.
     \begin{theorem}\label{th-ill-ideal}
    	The 3D ideal MHD equation \eqref{ideal mhd} is strongly ill-posed in $H^{s}(\R^3)\times H^{s}(\R^3)$ for any $0<s<\frac{5}{2}$ in the following sense. 
    	
    	For any $\varepsilon>0$, there exists a family of initial data  $(u_{0,\varepsilon},b_{0,\varepsilon})\in C_c^\infty(\R^3)\times C_c^\infty(\R^3)$ such that the following hold.
    \begin{enumerate}
   		\item[(1)] $u_{0,\varepsilon}, b_{0,\varepsilon}$ are divergence-free and \[\left\|u_{0,\varepsilon}\right\|_{H^{s}(\R^3)}+\left\|b_{0,\varepsilon}\right\|_{H^{s}(\R^3)}\le\varepsilon.\]
    	\item[(2)] There exists a time $0<t^*\le\varepsilon$ such that $(u_{\varepsilon},b_{\varepsilon})\in C([0,t^*];H^{\infty}(\R^3)\times H^{\infty}(\R^3))$ and \[\left\|b_{\varepsilon}(t^*)\right\|_{H^{s}(\R^3)}\ge\frac{1}{\varepsilon}.\]
    	 \item[(3)] Simultaneously, the velocity field remains uniformly bounded without any inflation
    	\begin{equation*}
    	\|u_\varepsilon\|_{L^\infty([0, t^*]; H^s(\R^3))} \le \varepsilon.
    	\end{equation*}
    \end{enumerate}
    \end{theorem}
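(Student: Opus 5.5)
A natural route to ``magnetic-solo'' inflation is an ansatz in which $b$ is passively stretched by a fixed, small background velocity while $u$ feels no back-reaction. One device comes to mind first: take $b$ to be a unidirectional shear, e.g. $b=(0,0,\beta(x_1,x_2))$. Then $b\cdot\nabla b=\beta\partial_3 b=0$, and this is compatible with the induction equation when the velocity is a planar 2D flow independent of $x_3$, $u=(u_1(x_1,x_2),u_2(x_1,x_2),0)$. In that case $b\cdot\nabla u=\beta\partial_3 u=0$, and $b_3=\beta$ is simply transported: $\partial_t\beta+u\cdot\nabla\beta=0$. The Lorentz force vanishes, so $u$ solves the 2D Euler equations. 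Such data are not in $H^s(\R^3)$, however. I would therefore localize in $x_3$ with a slowly varying cutoff $\chi(x_3/L)$ and absorb the resulting errors, or better, use an honestly 3D compactly supported geometry. My concrete plan uses the axisymmetric swirl-free analogue. I take $u=u^r e_r+u^z e_z$, a vortex-ring-type flow as in Luo \cite{luo2024illposedness}, and $b=b^\theta(r,z)e_\theta$ purely toroidal. Then $b\cdot\nabla b=-(b^\theta)^2 e_r/r$ is a gradient-free but small force, and $b^\theta/r$ is transported by $u$.

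\emph{Step 1 (exact reduction).} For swirl-free axisymmetric $u$ and toroidal $b$, the system reduces to $\partial_t(b^\theta/r)+u\cdot\nabla(b^\theta/r)=0$ together with the vorticity equation $\partial_t(\omega^\theta/r)+u\cdot\nabla(\omega^\theta/r)=-\partial_z\big((b^\theta)^2/r^2\big)$. Smooth compactly supported data give a smooth solution on a short time interval $[0,T_\varepsilon]$, using the subcritical theory \cite{chen2010well}, and the axisymmetric structure persists.

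\emph{Step 2 (data).} I choose $u_0$ to be a fixed smooth hyperbolic-type axisymmetric flow near a point away from the axis: a vortex ring whose strain stretches in one direction, scaled so that $\|u_0\|_{H^s}\le\varepsilon/2$. The strain rate is then $\sim\lambda$, obtained by concentration at scale $\lambda^{-1}$ with amplitude chosen so that $\lambda^{s-5/2}\cdot\lambda\,(\text{amplitude})$ is small; this is where $s<5/2$ enters. I take $b_0$ to be a toroidal field of amplitude $\delta$ oscillating at frequency $N$ and localized in the hyperbolic region, with $\delta N^s\le\varepsilon/2$.

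\emph{Step 3 (magnetic growth).} Since $b^\theta/r$ is transported, its frequency content is compressed or expanded by the flow map. Over a time $t^*\sim \lambda^{-1}\log(\cdot)\le\varepsilon$, the hyperbolic flow elongates the level sets by a factor $e^{c\lambda t}$. This gives $\|b(t^*)\|_{\dot H^s}\gtrsim e^{cs\lambda t^*}\|b_0\|_{\dot H^s}$ (a Lagrangian lower bound on $\|\nabla\Phi\|$), which I make at least $1/\varepsilon$ by choosing parameters. The case of small $s$ needs the growth to be estimated via the norm of a composition, bounded below by testing against the stretched profile.

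\emph{Step 4 (velocity stays small).} I then control $u$ on $[0,t^*]$. The forcing $\partial_z((b^\theta)^2/r^2)$ is quadratic in $\delta$, so choosing $\delta$ tiny makes its effect on $\omega^\theta/r$ negligible. With no forcing, $u$ would stay close to the Euler evolution of $u_0$, whose $H^s$ norm is $O(\varepsilon)$ over the short time. This needs $\|b\|_{L^\infty}\le\delta$, which is preserved because $b^\theta/r$ is transported and $r\approx r_0$. It also needs a Gronwall estimate in $H^{s'}$ with $s'>5/2$ for the difference from the unforced flow, using the fact that $N$ and $\lambda$ are finite for fixed $\varepsilon$.

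I expect the main obstacle to be this coupling of scales: achieving Step 3's inflation within time $\varepsilon$ while keeping Step 4's velocity bound uniform, and in particular keeping the Euler evolution of $u_0$ from itself inflating. I would try first to make $u_0$ a steady or near-steady axisymmetric profile, or to stop at a time before Luo-type inflation of $u$ occurs, exploiting that $b$ inflates through amplitude-free stretching.
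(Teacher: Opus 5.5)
There is a genuine gap, and it sits where you put the ``main obstacle''. Your architecture matches the paper's Magnetic-solo ansatz: swirl-free poloidal $u$, toroidal $b$, $b_\theta/r$ transported, and the velocity acting as a passive stirrer. But the two ideas that make it close are missing.

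\textbf{The missing velocity profile.} You never produce a velocity field that is small in $H^s$, stays close to an explicitly known flow over the whole inflation time, and stretches $b_\theta/r$ by a computable amount. A ``vortex ring whose strain stretches in one direction'' is not a stationary object: classical rings translate and deform, and you give no compactly supported (near-)steady profile with a persistent hyperbolic point. A non-steady profile reorganizes itself on its turnover time $\|\nabla u_0\|_{L^\infty}^{-1}$, which is exactly the time scale of your stretching, and its $\omega_\theta/r$ is stirred by the same flow as $b_\theta/r$. So nothing guarantees any of the following:
\begin{itemize}
\item that the hyperbolic structure survives the roughly $\log(1/\varepsilon)$ turnover times you need;
\item that $\|u(t)\|_{H^s}$ stays $O(\varepsilon)$;
\item the lower bound of Step 3, which in any case does not follow from a lower bound on $\|\nabla\Phi\|$ alone.
\end{itemize}
The paper avoids hyperbolic stretching altogether. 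Its $u_0$ is a thin torus whose cross-section carries a radial vortex in the meridional $(r,z)$-plane. This is an exact stationary 2D Euler solution with compactly supported stream function, and on the support of $g_b$ it satisfies $u_{0,r}\partial_r+u_{0,z}\partial_z=A\rho^{-1}\partial_\phi$, with $A=\varepsilon^2\mu^{1-s}\nu^{\frac12}$. The differential rotation then shears $b_\theta$ explicitly: $\bar b_\theta=Ag_b(\mu\rho)\sin(\phi-tA/\rho)$. Gradients grow only linearly in time. That suffices because $t^*=\varepsilon^{-N-2}\mu^{s-2}\nu^{-\frac12}$ equals $\varepsilon^{-N}$ turnover times yet still tends to $0$ as $\mu\to\infty$. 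The explicit formula gives $\|\bar b(t^*)\|_{\dot H^1}\gtrsim\varepsilon^{2-N}\mu^{1-s}$. Interpolating against $L^2$ (if $s\ge1$) or $\dot H^2$ (if $s<1$) then gives the $\dot H^s$ lower bound for every $s>0$.

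\textbf{The missing source of smallness for the 3D terms.} The paper uses two scales: cross-section $\mu^{-1}$ and distance $\nu^{-1}$ from the axis, with $\nu=\mu^{1-\gamma}$. Since $1/r\sim\nu\ll\mu$, three terms are smaller than the leading ones by $\mu^{-1}\nu=\mu^{-\gamma}$: the Lorentz force $\bar b_\theta^2/r$, the term $u_{0,r}\bar b_\theta/r$, and the corrector $u_c$. This holds even though $b_0$ has the \emph{same} amplitude as $u_0$. Every $\varepsilon$-dependent constant, including the Gronwall factor $e^{C\varepsilon^{-N}}$, is then beaten by sending $\mu\to\infty$, via $Y_0\lesssim_\varepsilon\mu^{-s-\gamma}$, $Y_k\lesssim_\varepsilon\mu^{k-s}$ and interpolation. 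Taking $\nu$ close to $\mu$ is what reaches all $s<\frac52$.

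Your normalization $\lambda^{s-\frac52}$ is that of isotropic concentration, i.e.\ aspect ratio of order one, where none of these terms is small. A ring at fixed distance from the axis would only reach $s<2$.

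Your substitute, taking $\delta$ tiny, does not close the $H^{s'}$ Gronwall estimate you propose. The forcing $\partial_z(b_\theta^2/r^2)$ involves $\nabla b$, which is the very quantity being inflated.
\begin{itemize}
\item If $\delta\to0$ with $N$ fixed, then $\|b_0\|_{H^s}\to0$ and the stretching factor $S$ you need grows like $\delta^{-1/s}$. The forcing, of size $\|b\|_{L^\infty}\|b\|_{H^{s'}}\sim\delta^2(NS)^{s'}$, then behaves like $\delta^{2-s'/s}$, before the Gronwall factor is even counted.
\item If instead $\delta\sim\varepsilon N^{-s}$ with $N\to\infty$, the same forcing behaves like $N^{s'-2s}$.
\end{itemize}
Since $s'>\frac52$, both blow up for $s<\frac54$.
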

	
	\begin{remark}
		Unlike the recent norm inflation results for the Euler and Navier--Stokes equations \cite{luo2024illposedness}, where the inflation occurs in the velocity field, our construction for the MHD system exhibits a fundamentally different mechanism: the velocity field remains uniformly bounded, while the magnetic field undergoes norm inflation. This highlights the distinct role of the velocity field, which acts primarily as a steady transport flow driving the magnetic amplification.
	\end{remark}

Remarkably, our method of proving Theorem \ref{th-ill-ideal} is robust enough that only minor modifications are needed to handle the dissipative cases. We show that this strong ill-posedness persists across the three variants \eqref{non-resistive mhd}-\eqref{viscous-resistive mhd}, providing a unified treatment of the ill-posedness theory for the entire family of 3D incompressible MHD equations in the corresponding supercritical Sobolev spaces.
	\begin{theorem}\label{th-ill-mhd-unified}
		The strong ill-posedness persists for the other three variants of the 3D MHD equations in $H^{s_u}(\R^3) \times H^{s_b}(\R^3)$. Specifically, we define the Sobolev indices $(s_u, s_b)$ for the respective systems as follows:
		\begin{itemize}
			\item For the non-resistive MHD equation \eqref{non-resistive mhd}: $(s_u, s_b) = (s, s+1)$ with $0 < s < \frac{1}{2}$;
			\item For the non-viscous MHD equation \eqref{non-viscous mhd}: $(s_u, s_b) = (s, s-1)$ with $1 < s < \frac{5}{2}$;
			\item For the viscous and resistive MHD equation \eqref{viscous-resistive mhd}: $(s_u, s_b) = (s, s)$ with $0 < s < \frac{1}{2}$.
		\end{itemize}
		
		For each of the cases above, the corresponding equation is strongly ill-posed in the following sense: for any $\varepsilon>0$, there exists a family of initial data $(u_{0,\varepsilon}, b_{0,\varepsilon}) \in C_c^\infty(\R^3) \times C_c^\infty(\R^3)$ such that the following hold.
		\begin{enumerate}
			\item[(1)] $u_{0,\varepsilon}$ and $b_{0,\varepsilon}$ are divergence-free and 
			\begin{equation*}
			\left\|u_{0,\varepsilon}\right\|_{H^{s_u}(\R^3)} + \left\|b_{0,\varepsilon}\right\|_{H^{s_b}(\R^3)} \le \varepsilon.
			\end{equation*}
			\item[(2)] There exists a time $0 < t^* \le \varepsilon$ such that $(u_{\varepsilon}, b_{\varepsilon}) \in C([0,t^*]; H^{\infty}(\R^3) \times H^{\infty}(\R^3))$ and 
			\begin{equation*}
			\left\|u_{\varepsilon}(t^*)\right\|_{H^{s_u}(\R^3)} +\left\|b_{\varepsilon}(t^*)\right\|_{H^{s_b}(\R^3)} \ge \frac{1}{\varepsilon}.
			\end{equation*}
		\end{enumerate}
	\end{theorem}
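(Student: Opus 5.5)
The plan is to reuse the ``Magnetic-solo ansatz'' behind Theorem \ref{th-ill-ideal} and to treat the dissipative terms as small perturbations on a very short time interval. The background velocity is a smooth, compactly supported, divergence-free shear/vortex-ring type flow $\bar u$ with fixed profile and small amplitude. On top of it sits a high-frequency magnetic perturbation $b_0 = A N^{-\sigma}\,\Psi(x)\,\nabla\times(\phi(x)\cos(Nx\cdot\xi))$, where $\sigma$ is chosen so that $\|b_0\|_{H^{s_b}}\lesssim\varepsilon$. The transport--stretching equation $\partial_t b+\bar u\cdot\nabla b=b\cdot\nabla\bar u$ then produces growth of the form $\|b(t)\|_{\dot H^{s_b}}\gtrsim e^{ct}\|b_0\|_{\dot H^{s_b}}$, together with frequency growth through shearing: the transport by $\bar u$ tilts the wavevector $N\xi$ into $N\xi(1+t|\nabla\bar u|)$. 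Choosing the amplitude of $\bar u$ large in $L^\infty$-gradient but small in $H^{s_u}$ is exactly where supercriticality enters. Concretely, I rescale $\bar u_\lambda(x)=\lambda^{\alpha}\bar u(\lambda x)$ so that $\|\nabla \bar u_\lambda\|_{L^\infty}=\lambda^{\alpha+1}\to\infty$ while $\|\bar u_\lambda\|_{H^{s_u}}\sim\lambda^{\alpha+s_u-3/2}\to0$. This is possible precisely when $s_u<\frac52$, and, in the viscous cases where the relevant time scale must beat diffusion, when $s_u<\frac12$.

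The key steps, in order, are as follows.

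\textbf{Step 1: approximate solution.} Define $(\bar u,\bar b)$, where $\bar u$ solves the dissipative equation with $b=0$ (Navier--Stokes or Euler) from $\bar u_\lambda$, and $\bar b$ solves the linear induction equation (with $-\Delta b$ when $\varepsilon_2=1$) driven by $\bar u$. On the time scale $t\sim T\lambda^{-(\alpha+1)}$, the profile $\bar u$ stays close to $\bar u_\lambda$. In the viscous cases the diffusion over this time scale is of size $t\lambda^2=T\lambda^{1-\alpha}$, which stays small as long as $\alpha>1$; combining this with $\alpha+s_u-\frac32<0$ gives the constraint $s_u<\frac12$. For $\bar b$, I compute the growth explicitly by the Lagrangian/Cauchy formula $\bar b(X(t,a),t)=\nabla_aX\,b_0(a)$ in the ideal-magnetic case. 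In the resistive case I instead use a Fourier/WKB ansatz, where the resistive damping $e^{-N^2|\xi(t)|^2t}$ is kept harmless by taking $N^2 t\ll1$.

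\textbf{Step 2: index bookkeeping per case.} For the non-resistive case $(s,s+1)$, $b$ is hyperbolic, so I inflate $\|b\|_{H^{s+1}}$ via gradient growth of the transported field, with $\varepsilon$-smallness of $u_0$ in $H^s$, $s<\frac12$. For the non-viscous case $(s,s-1)$, Euler-type scaling for $u$ allows $s<\frac52$, and I place the inflation in $b$ in $H^{s-1}$. The resistive smoothing is beaten by keeping $N$ moderate and letting the growth come from $\lambda$ instead; the condition $s>1$ ensures $s-1>0$ is a positive-regularity norm in which the $\lambda$-scaled stretching produces growth. For the fully viscous case $(s,s)$, I combine the two previous constraints. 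In each case I then fix parameters $\lambda,N,A,T$ as powers of a single large parameter.

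\textbf{Step 3: stability.} Let $w=u-\bar u$ and $h=b-\bar b$. I perform $L^2$ and $H^{k}$ energy estimates with $k$ large, using that the Lorentz force $\bar b\cdot\nabla\bar b$ is quadratically small because $\|\bar b\|$ is tiny in high norms relative to the time scale. Gronwall with $\int_0^{t^*}\|\nabla\bar u\|_{L^\infty}\lesssim T$ then gives the bound on $\|h\|$. Interpolating this error bound shows that it is much smaller than the inflated norm of $\bar b$ in $H^{s_b}$, and smoothness of the solution on $[0,t^*]$ follows from the same estimates.

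The main obstacle is Step 3 in the non-resistive and ideal-magnetic settings. There the magnetic error equation loses a derivative, so I would estimate $h$ in a norm one order lower and interpolate. The backreaction $b\cdot\nabla b$ must be shown negligible over a time $t^*$ during which $\nabla\bar u$ is huge; I would try first to choose $A$ small enough that $\|\bar b\|_{W^{1,\infty}}^2t^*$ is $o(1)$ while the inflation factor still exceeds $\varepsilon^{-2}$.
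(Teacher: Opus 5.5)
\textbf{The gap is the non-viscous case} $(s_u,s_b)=(s,s-1)$, $1<s<\frac52$. There you keep $u$ within a small $H^s$-perturbation of $\bar u_\lambda$ and try to inflate $b$ in $H^{s-1}$. In this system that cannot happen. Write $(b\cdot\nabla u-u\cdot\nabla b)_i=\partial_j(b_ju_i-u_jb_i)$. Lemma \ref{lem:fractional_leibniz} with the pairings $L^6\times L^3$, together with $\dot H^1\subset L^6$ and $H^{1/2}\subset L^3$, gives for any smooth solution
\[
\frac{\d}{\d t}\|b\|_{\dot H^{s-1}}^2+2\|b\|_{\dot H^{s}}^2\le C\|u\|_{H^s}\bigl(\|b\|_{L^2}+\|b\|_{\dot H^s}\bigr)\|b\|_{\dot H^s},\qquad \frac{\d}{\d t}\|b\|_{L^2}^2+2\|\nabla b\|_{L^2}^2\le C\|u\|_{L^3}\|\nabla b\|_{L^2}^2 .
\]
Suppose $\sup_{[0,t]}\|u\|_{H^s}\le c_0$ for a small absolute $c_0$, and $t\le 1$. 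Then $\|b(t)\|_{L^2}\le\|b_0\|_{L^2}$ and $\|b(t)\|_{\dot H^{s-1}}^2\le\|b_0\|_{\dot H^{s-1}}^2+Cc_0^2t\|b_0\|_{L^2}^2$. So magnetic inflation in $H^{s-1}$ forces $\|u\|_{H^s}$ to become of order one, which your Steps 1 and 3 explicitly rule out. Your heuristic (moderate $N$, growth from $\lambda$) misses where the stretched field lives: at the velocity scale $\lambda^{-1}$. At that scale the resistive rate $\lambda^2$ beats the stretching rate $\lambda^{\alpha+1}$, since $\alpha+1<\frac52-s<\frac32$. This is why the paper abandons the Magnetic-solo ansatz for this system.

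\textbf{What is missing here} is to put the inflation in the velocity, which conclusion (2) allows. The paper adds a swirl $u_{0,\theta}=\varepsilon^2\mu^{1-s}\nu^{1/2}f_u(\mu\rho)\sin\phi$. At leading order it is transported by the steady poloidal vortex ring, just as $\bar b_\theta$ was in the ideal case. The phase-mixing computation of Lemma \ref{lem:norm_inflation} then gives $\|\bar u(t^*)\|_{H^s}\ge\varepsilon^{-2}$, with no diffusion acting on $u$. The magnetic field is shrunk to amplitude $\mu^{-M}$, $M\ge 10$. This makes $\bar b$, the Lorentz force and the resistive error $-\Delta\bar b$ negligible, and keeps $b$ small in $L^\infty H^{s-1}\cap L^1H^{s+1}$.

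\textbf{The other two cases} follow the paper's Mechanism I: magnetic inflation by transport, with dissipation negligible because $t^*\mu^2\ll1$ exactly when $s<\frac12$. You do, however, leave implicit two things the paper's geometry supplies.
\begin{enumerate}
\item[(i)] The background must stay (nearly) steady over $\varepsilon^{-N-2}$ turnover times. A generic rescaled Euler or Navier--Stokes profile will not. The paper uses a radial 2D Euler vortex in the cross-section of a thin torus, whose 3D corrections have relative size $\mu^{-1}\nu=\mu^{-\gamma}$.
\item[(ii)] The Lorentz force must be negligible even at full magnetic amplitude. For an axisymmetric toroidal field, $\bar b\cdot\nabla\bar b=-r^{-1}\bar b_\theta^2\mathbf{e}_r$ carries no derivative of $\bar b_\theta$, so this holds automatically. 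Your generic high-frequency datum lacks this and forces you to shrink $A$.
\end{enumerate}
Finally, $\Psi\,\nabla\times(\cdots)$ is not divergence-free; the cutoff must go inside the curl.
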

	\begin{remark}
		We emphasize that the inflation mechanism differs between the three dissipative variants. For the non-resistive MHD equations \eqref{non-resistive mhd} and the viscous-resistive MHD equations \eqref{viscous-resistive mhd}, the norm inflation occurs in the magnetic field $b$, while the velocity field $u$ remains uniformly bounded, as in the ideal MHD case. In contrast, for the non-viscous MHD equations \eqref{non-viscous mhd}, the norm inflation occurs in the velocity field $u$, whereas the magnetic field remains uniformly bounded up to the critical time $t^*$.
	\end{remark}

    To provide a systematic overview of our results, we contrast the classical local well-posedness theories with our strong ill-posedness results in supercritical Sobolev spaces for the four 3D MHD systems in Table \ref{tab:summary_results}.

    \begin{table}[H] 
    	\centering
    	\renewcommand{\arraystretch}{1.5}
    	\resizebox{\linewidth}{!}{%
    		\begin{tabular}{l c r}
    			\toprule
    			\textbf{MHD System}
    			& \textbf{Local Well-posedness}
    			& \textbf{Ill-posedness in
    				$H^{s_u}(\R^3)\times H^{s_b}(\R^3)$} \\
    			\midrule
    			Ideal MHD 
    			& \cite{majda1984compressible,chen2010well} 
    			& $s_u=s_b=s,\ 0<s<\frac52$,
    			\quad Theorem \ref{th-ill-ideal} \\
    			
    			Non-resistive MHD 
    			& \cite{fefferman2017local} 
    			& $(s_u,s_b)=(s,s+1),\ 0<s<\frac12$,
    			\quad Theorem \ref{th-ill-mhd-unified} \\
    			
    			Non-viscous MHD 
    			& \cite{wu2023mild}
    			& $(s_u,s_b)=(s,s-1),\ 1<s<\frac52$,
    			\quad Theorem \ref{th-ill-mhd-unified} \\
    			
    			Viscous and resistive MHD 
    			& \cite{duvaut1972inequations,sermange1983some,wu2003generalized} 
    			& $s_u=s_b=s,\ 0<s<\frac12$,
    			\quad Theorem \ref{th-ill-mhd-unified} \\
    			\bottomrule
    		\end{tabular}%
    	}
    	\vspace{0.2cm}
    	\caption{Comparison of the local well-posedness and the
    		supercritical strong ill-posedness regimes in Sobolev spaces
    		proven in this paper.}
    	\label{tab:summary_results}
    \end{table}
	
   \begin{remark}
   	With suitable modifications of the Magnetic-solo ansatz to be introduced below and the construction developed by Luo in \cite{luo2025sharp}, we expect that our strong ill-posedness results for all four MHD systems can be extended to Besov spaces at the corresponding supercritical regimes, which is still an interesting problem.
   \end{remark}

	\subsection{Main strategy and construction}
	A central question addressed in this paper is: Why can a single unified framework induce norm inflation across four dynamically distinct MHD systems? 
	
	To overcome the intricate nonlinear couplings between the velocity and magnetic fields, we introduce a carefully designed anisotropically decoupled approximate evolution, which we call the \textbf{Magnetic-solo ansatz}.

	Let $(r,\theta,z)$ denote the standard cylindrical coordinates in $\R^3$. More precisely, as defined in \eqref{approx_sol}, the Magnetic-solo ansatz is given by an approximate solution $(\bar u,\bar b)$ consisting of a purely poloidal, swirl-free velocity field and a purely toroidal magnetic field:
	\begin{equation*}
	\bar{u}(t,x) = u_{0,r}\mathbf{e}_r + (u_{0,z} + u_c)\mathbf{e}_z, \quad \bar{b}(t,x) = \bar{b}_\theta\mathbf{e}_\theta.
	\end{equation*}
	Here, the toroidal component $\bar b_\theta$ is transported by the stationary poloidal velocity according to
	\begin{equation*}
	\partial_t\bar b_\theta
	+
	\bigl(u_{0,r}\partial_r+u_{0,z}\partial_z\bigr)\bar b_\theta
	=
	0,
	\end{equation*}
	as specified in \eqref{transport_b}. Under this specific geometry, the velocity field $\bar{u}$ acts as a stationary 2D incompressible Euler vortex ring and remains uniformly bounded. It serves purely as a steady shear flow that continuously drives the magnetic field, producing rapid norm inflation solely in the magnetic component $\bar{b}$. This geometric decoupling allows the pair $(\bar u,\bar b)$ to satisfy the approximate MHD system \eqref{error_eqs} with controlled residual errors. See Section \ref{sec-approx} for details.

	Our geometric reduction highlights a fundamental difference between the MHD system and purely hydrodynamic models. In the recent strong ill-posedness results for the Euler and Navier-Stokes equations by Luo \cite{luo2024illposedness}, norm inflation occurs in the velocity field. In contrast, under the Magnetic-solo ansatz, the velocity field remains uniformly bounded.

	This reduction enables us to isolate the leading-order dynamics from the complex nonlinear couplings. To establish norm inflation for all four systems, we discover two distinct mechanisms depending on the presence or absence of specific dissipative terms.

	\vspace{2mm}
	\noindent \textbf{Mechanism I:} Magnetic-solo inflation (for ideal, non-resistive, and viscous and resistive MHD)
	
	For equations \eqref{ideal mhd}, \eqref{non-resistive mhd}, and \eqref{viscous-resistive mhd}, the Magnetic-solo ansatz produces magnetic norm inflation through the linear transport-stretching mechanism. Driven by the steady velocity shear, the magnetic field undergoes rapid norm inflation in $H^{s_b}$. Crucially, since the critical inflation time $t^*\ll1$, the dissipative terms $-\Delta u$ and $-\Delta b$, whenever present, remain negligible at leading order and can be treated as perturbative errors.
	
	\vspace{2mm}
	\noindent \textbf{Mechanism II:} Velocity inflation via Euler dynamics (for non-viscous MHD)
	
	For the non-viscous MHD equations \eqref{non-viscous mhd}, the Magnetic-solo ansatz does not produce the desired magnetic inflation. We therefore reduce the amplitude of the magnetic field and regard it as a perturbation of the Euler dynamics, so that norm inflation occurs in the velocity field while the magnetic field remains uniformly bounded.
	
	\vspace{2mm}
	\noindent\textbf{Analytical Challenge: Excluding blow-up before $t^*$.}
	
	A fundamental difficulty in establishing supercritical norm inflation lies in the time scale. Classical local well-posedness theory guarantees a lifespan $T\sim\|\nabla(u_0,b_0)\|_{L^\infty}^{-1}$. In our construction, however, the critical time $t^*$ at which norm inflation occurs exceeds this classical lifespan by a factor of $\varepsilon^{-N}$. The main challenge is to prove that the exact solution $(u,b)$ excludes finite-time blow-up up to $t^*$ while remaining sufficiently close to the approximate solution $(\bar u,\bar b)$ in $H^s$. 
	
	To overcome this difficulty, we develop a direct energy estimate for the perturbation analysis. Working directly in fractional Sobolev spaces, we exploit asymmetric $L^2$--$L^\infty$ pairings through Kato-Ponce estimates, which place the highest-order derivatives on the approximate solutions. This substantially simplifies the perturbation analysis and yields a more direct justification of the $H^s$ approximation.

	\subsection{Structure of the paper and notation}
	The remainder of this paper is structured as follows. In Section \ref{sec-pre}, we collect the necessary preliminary tools, including the cylindrical coordinate formulations and fundamental estimates. In Section \ref{sec-approx}, we explicitly construct the initial data $(u_0,b_0)$ and define the approximate solutions $(\bar{u}, \bar{b})$. We subsequently compute the precise lower bounds of $\bar{b}$ at the critical time $t^*$ to capture the norm inflation. Section \ref{sec-pertur} is devoted to the perturbation analysis. Using a bootstrap argument, we prove that the difference between the exact solution and the approximate solution remains suitably controlled up to time $t^*$. In Section \ref{sec-proof}, we combine the bootstrap estimates to complete the proof of Theorem \ref{th-ill-ideal} for the ideal MHD equations. Finally, in Section \ref{sec-proof2}, we show that the same framework extends naturally to the non-resistive, non-viscous, and viscous and resistive MHD equations, thereby completing the proof of Theorem \ref{th-ill-mhd-unified}.
	\begin{notations}
		For $1\leq p\leq\infty$, we denote by $L^p(\R^d)$ the Lebesgue space equipped with the norm $\|\cdot\|_{L^p(\R^d)}$. For $s\in\R$, we denote by $H^s(\R^d)$ and $\dot H^s(\R^d)$ the inhomogeneous and homogeneous Sobolev spaces, respectively. The notation $a\lesssim b$ means that $a\leq Cb$ for some uniform constant $C$, which may differ from line to line. We write $a\lesssim_d b$ when the implicit constant depends on $d$. We write $a\sim b$ if both $a\lesssim b$ and $a\gtrsim b$ hold. Let $[\cdot,\cdot]$ denote the commutator of $A$ and $B$, i.e., $[A,B]=AB-BA$. We also use $\langle\cdot,\cdot\rangle$ to denote the standard inner product in $L^2(\R^d)$.
	\end{notations}

	\section{Preliminaries}\label{sec-pre}
	In this section, we collect several fundamental tools that will be frequently used in our analysis. Before introducing the analytical tools, we briefly recall the setup of cylindrical coordinates in $\R^3$, which forms the geometric foundation of our axisymmetric construction. 
	
	Let $(\theta,r, z)$ denote the standard cylindrical coordinates defined by
	\begin{equation*}
	x_1 = r \cos \theta, \quad x_2 = r \sin \theta, \quad x_3 = z,
	\end{equation*}
	with the associated orthonormal basis vectors given by
	\begin{equation*}
	\mathbf{e}_r = (\cos \theta, \sin \theta, 0), \quad \mathbf{e}_\theta = (-\sin \theta, \cos \theta, 0), \quad \mathbf{e}_z = (0, 0, 1).
	\end{equation*}
	For any vector field $v: \R^3 \to \R^3$, its cylindrical decomposition is expressed as $v = v_r \mathbf{e}_r + v_\theta \mathbf{e}_\theta + v_z \mathbf{e}_z$. We say a scalar function $f: \R^3 \to \R$ or a vector field $v$ is axisymmetric if all of its cylindrical components, namely $f$ or $v_r, v_\theta, v_z$, are independent of the angular variable $\theta$.
	
	Next, we collect the well-known Kato-Ponce commutator estimate and product estimate, which are essential for controlling the convective terms.
	\begin{lemma}{\rm\cite{kato1988commutator}}\label{lem:commutator}
		Assume that $s > 0$, $p \in (1, \infty)$, and $p_1, p_2, p_3, p_4 \in [1, \infty]$ such that $\frac{1}{p} = \frac{1}{p_1} + \frac{1}{p_2} = \frac{1}{p_3} + \frac{1}{p_4}$. Then there exists a constant $C > 0$ such that
		\begin{equation*}
		\|[\Lambda^s, f]g\|_{L^p} \le C \left( \|\Lambda^s f\|_{L^{p_1}} \|g\|_{L^{p_2}} + \|\nabla f\|_{L^{p_3}} \|\Lambda^{s-1} g\|_{L^{p_4}} \right).
		\end{equation*}
	\end{lemma}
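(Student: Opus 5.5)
The plan is to prove the Kato--Ponce commutator estimate of Lemma \ref{lem:commutator} through Bony's paraproduct decomposition, followed by Littlewood--Paley square-function and Coifman--Meyer bilinear multiplier arguments. I will write $\Lambda^s = (1-\Delta)^{s/2}$; the homogeneous version works the same way. Decompose
\[
fg = T_f g + T_g f + R(f,g),
\]
where $T_f g = \sum_j S_{j-1}f\,\Delta_j g$ and $R(f,g)=\sum_{|j-k|\le1}\Delta_j f\,\Delta_k g$. This gives
\[
[\Lambda^s,f]g = [\Lambda^s, T_f]g + \bigl(\Lambda^s T_g f - T_{\Lambda^s g} f\bigr)\ \text{(suitably regrouped)} + \Lambda^s R(f,g) - R(f,\Lambda^s g),
\]
and each group is estimated separately.

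\textbf{Low--high piece $[\Lambda^s,T_f]g$.} Each block $S_{j-1}f\,\Delta_j g$ is frequency localized at $2^j$. I will expand the symbol $\langle\xi+\eta\rangle^s-\langle\xi\rangle^s$ (where $|\eta|\ll|\xi|\sim 2^j$) by the mean value theorem to first order. This writes the commutator as a bilinear operator applied to $\nabla S_{j-1}f$ and $\Lambda^{s-1}\Delta_j g$. The resulting symbol satisfies Coifman--Meyer bounds, so the Coifman--Meyer theorem gives the bound $\|\nabla f\|_{L^{p_3}}\|\Lambda^{s-1}g\|_{L^{p_4}}$.

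\textbf{High--low piece $T_g f$ and $T_{\cdot}$ terms.} Here the high frequency sits on $f$. For $\Lambda^s(S_{j-1}g\,\Delta_j f)$, the symbol $\langle\xi+\eta\rangle^s\langle\xi\rangle^{-s}$ is of Coifman--Meyer type, so this term is bounded by $\|\Lambda^s f\|_{L^{p_1}}\|g\|_{L^{p_2}}$. The companion term $S_{j-1}\Lambda^s g\,\Delta_j f$ is rewritten as $S_{j-1}\Lambda^{s-1}g\,\Lambda\Delta_j f$, which yields the $\|\nabla f\|\,\|\Lambda^{s-1}g\|$ bound. When $s<1$ the low-frequency factor $\Lambda^{s-1}$ is harmless because the smoothing is on the low frequencies.

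\textbf{Resonant terms.} For $\Lambda^s R(f,g)$ I place the derivatives on $f$ using the Bernstein estimate $2^{js}\|\Delta_j f\|\lesssim\|\Delta_j\Lambda^s f\|$ together with a square-function argument; this is valid because $s>0$ ensures summability. For $R(f,\Lambda^s g)$ I move $s$ derivatives onto $f$ in the same way.

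\textbf{Main obstacle.} The hardest step is the endpoint cases $p_i=\infty$ (and $p_i=1$), where square functions fail. There I would rely on the Grafakos--Oh/Bourgain--Li refinements, which use $L^\infty$ bounds of paraproducts with $BMO$-type control. Since this lemma is classical and is cited from \cite{kato1988commutator}, I would ultimately refer to that work for these endpoint cases.
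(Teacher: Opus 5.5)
The paper gives no proof of this lemma; it only cites Kato--Ponce, whose original estimate is the case $p_1=p_4=p$, $p_2=p_3=\infty$ and rests on Coifman--Meyer's bilinear estimates. Your Bony-decomposition route is the standard modern proof of the same estimate. Your treatment of $[\Lambda^s,T_f]g$ (first-order expansion of $\langle\xi+\eta\rangle^s-\langle\xi\rangle^s$, then Coifman--Meyer), of $\Lambda^s T_g f$, and of $\Lambda^s R(f,g)$ (output blocks $\Delta_m$, the factor $2^{(m-j)s}$ summable because $s>0$, Fefferman--Stein) is sound. Three points need repair.

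First, your ``main obstacle'' is misdiagnosed, and as a result the sketch defers exactly the case the paper needs. Since $\frac{1}{p_i}\le\frac{1}{p}<1$, no $p_i$ can equal $1$. An $L^\infty$ input is not a genuine endpoint either. Because the output exponent satisfies $1<p<\infty$, the Coifman--Meyer theorem already gives $L^{q_1}\times L^{q_2}\to L^p$ for all $1<q_1,q_2\le\infty$ with $\frac1p=\frac1{q_1}+\frac1{q_2}$. So you need neither Bourgain--Li (output in $L^\infty$) nor Grafakos--Oh (range $p\le1$). This matters because the lemma is only used, in the proof of Lemma~\ref{lem:energy_est}, with $\nabla w$ in the $L^{p_2}$ slot and $\nabla u$ in the $L^{p_3}$ slot, both in $L^\infty$. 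Your own tools handle this case. In the square-function argument for $\Lambda^sR(f,g)$, after Fefferman--Stein you only need $\sup_k|\Delta_k g|\lesssim\|g\|_{L^\infty}$ on one factor and the square function of $\Lambda^s f$ on the other. Symmetrically, if $p_1=\infty$ you use $\sup_j 2^{js}|\Delta_j f|\lesssim\|\Lambda^s f\|_{L^\infty}$.

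Second, $R(f,\Lambda^s g)$ cannot be done ``in the same way.'' No $\Lambda^s$ acts on the output, so there is no decaying factor $2^{(m-j)s}$ to sum over $j\ge m$. The remaining square-function route is the pointwise bound $|\sum_j a_jb_j|\le(\sum_j|a_j|^2)^{1/2}(\sum_j|b_j|^2)^{1/2}$, which needs both exponents finite and fails for $g\in L^\infty$. Instead, note that $(\Lambda^s f,g)\mapsto R(f,\Lambda^s g)$ has symbol $\sum_{|j-k|\le1}\varphi_j(\xi)\varphi_k(\eta)\langle\eta\rangle^{s}\langle\xi\rangle^{-s}$, with $\varphi_j$ the symbol of $\Delta_j$. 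This is a genuine Coifman--Meyer symbol, since $|\xi|\sim|\eta|$ on its support, so Coifman--Meyer applies directly, endpoints included. In contrast, the symbol of $\Lambda^sR(f,g)$ contains $\langle\xi+\eta\rangle^s$ and is not of Coifman--Meyer type near $\xi+\eta=0$, which is why the output decomposition is needed there.

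Third, $S_{j-1}\Lambda^s g\,\Delta_j f$ is not equal to $S_{j-1}\Lambda^{s-1}g\,\Lambda\Delta_j f$. Write $S_{j-1}=\sum_{k\le j-2}\Delta_k$ and use $|\Delta_k\Lambda^s g|\lesssim 2^k M(\tilde\Delta_k\Lambda^{s-1}g)$ and $|\Delta_j f|\lesssim 2^{-j}M(\tilde\Delta_j\nabla f)$, where $M$ is the maximal function and $\tilde\Delta$ a slightly fattened block. Then sum the factor $2^{k-j}$, using the square function on whichever factor is not in $L^\infty$. This works for every $s>0$, so no separate remark about $s<1$ is needed.
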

	
	\begin{lemma}{\rm\cite{kato1988commutator}}\label{lem:fractional_leibniz}
		Assume that $s > 0$, $p \in (1, \infty)$, and $p_1, p_2, p_3, p_4 \in [1, \infty]$ such that $\frac{1}{p} = \frac{1}{p_1} + \frac{1}{p_2} = \frac{1}{p_3} + \frac{1}{p_4}$. Then there exists a constant $C > 0$ such that
		\begin{equation*}
		\|\Lambda^s (fg)\|_{L^p} \le C \left( \|f\|_{L^{p_1}} \|\Lambda^s g\|_{L^{p_2}} + \|g\|_{L^{p_3}} \|\Lambda^s f\|_{L^{p_4}} \right).
		\end{equation*}
	\end{lemma}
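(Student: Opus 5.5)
The plan is to prove the estimate of Lemma \ref{lem:fractional_leibniz} with Bony's paraproduct decomposition and Littlewood--Paley square-function techniques, following the argument of Kato--Ponce and its later refinements. First note that the endpoint $p_i=1$ cannot occur: since $p>1$ we have $\frac1p<1$, so each of $p_1,\dots,p_4$ is strictly greater than $1$. Let $\Delta_j$ be the dyadic blocks and $S_j=\sum_{k<j}\Delta_k$. We split
\begin{equation*}
fg=T_fg+T_gf+R(f,g),\qquad T_fg=\sum_j S_{j-1}f\,\Delta_jg,\qquad R(f,g)=\sum_{|j-j'|\le1}\Delta_jf\,\Delta_{j'}g .
\end{equation*}
The claim is then that $\Lambda^sT_fg$ is controlled by $\|f\|_{L^{p_1}}\|\Lambda^sg\|_{L^{p_2}}$, that $\Lambda^sT_gf$ is controlled symmetrically by $\|g\|_{L^{p_3}}\|\Lambda^sf\|_{L^{p_4}}$, and that the remainder is controlled by either product.

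\textbf{Paraproduct terms.} Each summand $S_{j-1}f\,\Delta_jg$ has Fourier support in an annulus of size $2^j$. Hence we can write
\begin{equation*}
\Lambda^s(S_{j-1}f\,\Delta_jg)=\widetilde\Delta_j\Lambda^s(S_{j-1}f\,\Delta_jg),
\end{equation*}
and the multiplier $|\xi|^s$ acts like $2^{js}$ on that annulus. Applying the Littlewood--Paley square-function characterization of $L^p$ for $1<p<\infty$ gives
\begin{equation*}
\|\Lambda^sT_fg\|_{L^p}\lesssim\Big\|\Big(\sum_j|S_{j-1}f|^2\,|2^{js}\widetilde\Delta_jg|^2\Big)^{1/2}\Big\|_{L^p}\le\Big\|Mf\Big(\sum_j|2^{js}\widetilde\Delta_jg|^2\Big)^{1/2}\Big\|_{L^p},
\end{equation*}
where $M$ is the Hardy--Littlewood maximal function. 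By H\"older's inequality, the $L^{p_1}$-boundedness of $M$ (valid since $p_1>1$, and trivial when $p_1=\infty$), and the square-function characterization of $\dot H^{s,p_2}$, this is at most $\|f\|_{L^{p_1}}\|\Lambda^sg\|_{L^{p_2}}$ whenever $1<p_2<\infty$. The term $T_gf$ is handled in the same way with the roles of $f$ and $g$ exchanged.

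\textbf{Remainder.} Here $\Delta_k R(f,g)$ only sees the pairs with $j\ge k-2$. Writing $2^{ks}=2^{(k-j)s}2^{js}$, the positivity $s>0$ makes the factor $2^{(k-j)s}$ summable over $j\ge k-2$. A Young-type convolution argument on the dyadic indices, combined with the Fefferman--Stein vector-valued maximal inequality, then bounds $\|\Lambda^sR\|_{L^p}$ by $\|f\|_{L^{p_1}}\|\Lambda^sg\|_{L^{p_2}}$. This is the only step where $s>0$ is essential.

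\textbf{Main obstacle.} The difficulty is the endpoint case where the factor carrying $\Lambda^s$ is measured in $L^\infty$, that is, $p_2=\infty$ or $p_4=\infty$. There the square-function characterization fails. I would first try to treat $\Lambda^s(fg)$ directly as a bilinear Coifman--Meyer type operator, with the high-frequency factor in $L^p$. If that does not close, I would appeal to the endpoint Kato--Ponce inequality of Bourgain--Li and Grafakos--Oh, which covers precisely these $L^\infty$ endpoints. For the applications in this paper, only pairings with one factor in $L^2$ are used, so the non-endpoint argument above already suffices there.
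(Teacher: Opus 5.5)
\textbf{Gap: the $L^\infty$ endpoint is not proved, and the paper relies on exactly that case.} The paper gives no proof of this lemma; it only cites Kato--Ponce, so your argument has to stand on its own. For $1<p_2,p_4<\infty$ your paraproduct proof is the standard one and is correct, with one imprecision. In the bound for $\|\Lambda^sT_fg\|_{L^p}$, the operator $2^{-js}\Lambda^s\widetilde\Delta_j$ acts on the product $S_{j-1}f\,\Delta_jg$, not on $g$ alone. You therefore need the pointwise bound $|2^{-js}\Lambda^s\widetilde\Delta_jF|\lesssim MF$ and the Fefferman--Stein inequality before you can pull out $|S_{j-1}f|\le Mf$.

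The genuine gap is the endpoint $p_2=\infty$ (which forces $p_1=p$), or symmetrically $p_4=\infty$. The statement includes it, you do not prove it, and your reason for dismissing it is wrong. In Lemma \ref{lem:energy_est} the paper uses $\|\Lambda^k(w\cdot\nabla\bar u)\|_{L^2}\lesssim\|w\|_{L^2}\|\nabla^{k+1}\bar u\|_{L^\infty}+\|\nabla\bar u\|_{L^\infty}\|w\|_{H^k}$. This is the lemma with $s=k$, $p=2$, $f=w$, $g=\nabla\bar u$, $(p_1,p_2)=(2,\infty)$ and $(p_3,p_4)=(\infty,2)$. The factor in $L^2$ is the \emph{undifferentiated} one. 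Keeping the small quantity $\|w\|_{L^2}$ while all $k$ derivatives fall on the approximate solution in $L^\infty$ is precisely the asymmetric pairing the perturbation argument is built on. In that configuration $\|(\sum_j|2^{js}\Delta_jg|^2)^{1/2}\|_{L^\infty}$ is not controlled by $\|\Lambda^sg\|_{L^\infty}$, so your treatment of $T_fg$ gives nothing.

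\textbf{How to close it.} You need a different tool for $T_fg$. Write $\Lambda^sT_fg=T_\sigma(f,\Lambda^sg)$ with $\sigma(\xi,\eta)=\sum_j\psi(2^{-j}\xi)\varphi(2^{-j}\eta)\,|\xi+\eta|^s|\eta|^{-s}$. On the support of each piece $|\xi+\eta|\sim|\eta|\sim 2^j$, so $\sigma$ satisfies the Coifman--Meyer bounds $|\partial_\xi^\alpha\partial_\eta^\beta\sigma|\lesssim(|\xi|+|\eta|)^{-|\alpha|-|\beta|}$. The Coifman--Meyer theorem then gives $L^p\times L^\infty\to L^p$; ultimately this rests on the $L^p$-boundedness of paraproducts $\sum_jS_{j-1}f\,\Delta_jG$ with $G\in L^\infty\subset\mathrm{BMO}$. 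This does not work for $\Lambda^s(fg)$ as a whole, as your sketch suggests, because on the high--high part $|\xi+\eta|^s$ is singular at $\xi+\eta=0$.

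The remainder $R=\sum_j\Delta_jf\,\widetilde\Delta_jg$, with $\widetilde\Delta_j=\Delta_{j-1}+\Delta_j+\Delta_{j+1}$, can be routed to the other product when $p_4<\infty$, as in the paper's application. If $p_2=p_4=\infty$, use duality instead. Write $\Lambda^sR=\sum_{m\ge -C}2^{-ms}\sum_j\Delta^{(s)}_{j-m}(\Delta_jf\cdot 2^{js}\widetilde\Delta_jg)$, where $\Delta^{(s)}_k$ has symbol $|2^{-k}\xi|^s\varphi(2^{-k}\xi)$ and $\|2^{js}\widetilde\Delta_jg\|_{L^\infty}\lesssim\|\Lambda^sg\|_{L^\infty}$. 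Pair each $m$-piece with $h\in L^{p'}$ and use Cauchy--Schwarz in $j$ together with the square functions of $f$ and $h$. This gives $\lesssim\|f\|_{L^p}\|\Lambda^sg\|_{L^\infty}\|h\|_{L^{p'}}$ uniformly in $m$, and $s>0$ sums the series.

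\textbf{References.} Bourgain--Li treats the target space $L^\infty$, which the statement excludes; the endpoint you need is in Grafakos--Oh. The estimate proved by Kato--Ponce puts the differentiated factor in $L^p$ and the other in $L^\infty$. So the paper's own citation does not literally cover the pairing $\|w\|_{L^2}\|\nabla^{k+1}\bar u\|_{L^\infty}$ either.
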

	
    The most crucial one is the classical \textit{a priori} energy estimate for the ideal MHD equations in Sobolev spaces, which relies on the above Lemmas.
    \begin{prop}\label{prop:a_priori}
    	Let $d \ge 2$ and $(u, b)$ be a smooth solution of the ideal MHD equations on $[0, t_0]$ for some $t_0 > 0$ such that
    	\begin{equation*}
    	\|\nabla u\|_{L^\infty([0, t_0]; L^\infty(\R^d))} + \|\nabla b\|_{L^\infty([0, t_0]; L^\infty(\R^d))} \le M
    	\end{equation*}
    	for some constant $M \ge 1$. Then for any $k \ge 0$ and $1 < p < \infty$, the following estimate holds for all $t \in [0, t_0]$
    	\begin{equation*}
    	\|u(t)\|_{W^{k,p}(\R^d)} + \|b(t)\|_{W^{k,p}(\R^d)} \le \left( \|u_0\|_{W^{k,p}(\R^d)} + \|b_0\|_{W^{k,p}(\R^d)} \right) \exp\left({C_{k,p} M t}\right),
    	\end{equation*}
    	where $C_{k,p} > 0$ is a universal constant independent of $M$, $t_0$, and $t$.
    \end{prop}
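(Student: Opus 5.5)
We prove Proposition \ref{prop:a_priori} by a standard energy method in $W^{k,p}$, first for integer $k$ (using $\partial^\alpha$) and then for general $k\ge 0$ with $\Lambda^k$ in place of $\partial^\alpha$, via Lemmas \ref{lem:commutator} and \ref{lem:fractional_leibniz}.

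\textbf{Step 1: Elsässer variables.} We pass to $z^\pm = u \pm b$. These satisfy
\[
\partial_t z^\pm + z^\mp\cdot\nabla z^\pm + \nabla p = 0, \qquad \operatorname{div} z^\pm = 0 .
\]
So each $z^\pm$ is transported by the divergence-free field $z^\mp$. The $W^{k,p}$ norms of $(u,b)$ and of $(z^+,z^-)$ are equivalent up to a factor $2$, and $\|\nabla z^\pm\|_{L^\infty}\le M$.

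\textbf{Step 2: $L^p$ estimate of $\Lambda^k z^\pm$.} Apply $\Lambda^k$ and write
\[
\partial_t \Lambda^k z^\pm + z^\mp\cdot\nabla\Lambda^k z^\pm = -[\Lambda^k, z^\mp\cdot\nabla] z^\pm - \nabla\Lambda^k p .
\]
Pair this with $|\Lambda^k z^\pm|^{p-2}\Lambda^k z^\pm$. The transport term vanishes after integrating by parts, because $\operatorname{div} z^\mp = 0$. For the commutator, use $z^\mp\cdot\nabla z^\pm = \nabla\cdot(z^\mp\otimes z^\pm)$ and Lemma \ref{lem:commutator} with $(p_1,p_2,p_3,p_4)=(p,\infty,\infty,p)$. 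This gives the bound
\[
C\bigl(\|\Lambda^k z^\mp\|_{L^p}\|\nabla z^\pm\|_{L^\infty} + \|\nabla z^\mp\|_{L^\infty}\|\Lambda^k z^\pm\|_{L^p}\bigr).
\]

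\textbf{Step 3: Pressure.} Taking the divergence gives
\[
-\Delta p = \partial_i\partial_j(z^{-,i}z^{+,j}) = \partial_i z^{-,j}\,\partial_j z^{+,i}.
\]
Hence $\nabla p = \nabla(-\Delta)^{-1}\bigl(\partial_i z^{-,j}\partial_j z^{+,i}\bigr)$, which is a Riesz-type transform of degree $-1$ acting on a product of gradients.

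Estimating $\nabla\Lambda^k p$ in $L^p$ is the main obstacle. The naive route through Lemma \ref{lem:fractional_leibniz} yields $\|\Lambda^{k-1}(\nabla z^-\,\nabla z^+)\|_{L^p}$. When $k\ge1$ this is bounded by $\|\nabla z\|_{L^\infty}\|\Lambda^k z\|_{L^p}$, using $L^p$-boundedness of Riesz transforms ($1<p<\infty$) and a further Kato--Ponce step. When $0\le k<1$ I would instead write
\[
\nabla p = \nabla(-\Delta)^{-1}\partial_i\partial_j(z^{-,i}z^{+,j})
\]
and estimate $\Lambda^k$ of this via the commutator structure, or use the Leray projector on the nonlinearity directly. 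Since $\mathbb P$ commutes with $\Lambda^k$, is bounded on $L^p$, and annihilates the gradient pressure, this reduces matters to the same commutator bound as in Step 2, plus a term of the form $\mathbb P(z^\mp\cdot\nabla\Lambda^k z^\pm)$. That last term must still be controlled; the standard trick is to use the identity $\mathbb P(z^\mp\cdot\nabla\Lambda^k z^\pm) - z^\mp\cdot\nabla\Lambda^k z^\pm = -\nabla\cdot(\ldots)$, whose contribution is again quadratic with one factor $\nabla z$. This low-$k$, $p\neq2$ case is where I expect care to be needed.

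\textbf{Step 4: Closing.} The $L^p$ norm itself (the $k=0$ part of the inhomogeneous norm) is handled the same way. Its pressure term is $\|\nabla p\|_{L^p}\lesssim\|\nabla z\|_{L^\infty}\|z\|_{L^p}$ by Calderón--Zygmund, again writing the pressure in divergence form. Summing over $\pm$ gives
\[
\frac{d}{dt}\sum_\pm\|z^\pm\|_{W^{k,p}} \le C_{k,p}\,M\sum_\pm\|z^\pm\|_{W^{k,p}},
\]
where the differential inequality is justified for $\|f\|_{L^p}$ through $\frac{d}{dt}\|f\|_{L^p}^p$. Grönwall's inequality then gives the stated bound, after converting back to $(u,b)$ and absorbing the equivalence constants into $C_{k,p}$ (allowing harmless enlargement of the constant in the exponent since $M\ge1$, $t\ge0$).
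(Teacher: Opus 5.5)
\textbf{Comparison with the paper.} The paper gives no proof of this proposition. It quotes the estimate as classical and only remarks that it rests on Lemmas \ref{lem:commutator} and \ref{lem:fractional_leibniz}.

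\textbf{Steps 1--3 are sound.} Your route is the standard one, and Elsässer variables are the right device. For $p\neq 2$, the top-order terms $b\cdot\nabla\Lambda^k b$ (in the $u$-equation) and $b\cdot\nabla\Lambda^k u$ (in the $b$-equation) cancel only in the $L^2$ pairing, whereas each $\Lambda^k z^\pm$ is simply transported by $z^\mp$.

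In Step 2 the divergence form is unnecessary. Lemma \ref{lem:commutator} with $f=z^{\mp,j}$, $g=\partial_j z^\pm$, together with $\|\Lambda^{k-1}\nabla h\|_{L^p}\lesssim\|\Lambda^k h\|_{L^p}$, already gives your bound.

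Step 3 needs no case split in $k$. With $Z=\Lambda^k z^\pm$ and $\operatorname{div} z^\mp=\operatorname{div} Z=0$,
\begin{equation*}
(I-\mathbb P)(z^\mp\cdot\nabla Z)=\nabla\Delta^{-1}\operatorname{div}(z^\mp\cdot\nabla Z)=\nabla\Delta^{-1}\partial_j\bigl(\partial_i z^{\mp,j}Z^i\bigr).
\end{equation*}
This is a gradient (not a divergence, as you wrote) of a zeroth-order operator applied to $\nabla z^\mp\otimes Z$. Hence it is bounded in $L^p$ by $C_p\|\nabla z^\mp\|_{L^\infty}\|Z\|_{L^p}$ for every $k\ge 0$. (For non-integer $k$ and $p\ne2$, your $\Lambda^k$-norm is the Bessel-potential norm rather than the Slobodeckij one. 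The paper only needs $p=2$.)

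\textbf{The last step fails.} A multiplicative constant cannot be absorbed into the exponential: $2e^{CMt}\le e^{C'Mt}$ is false for $0<t<\ln 2/((C'-C)M)$, whatever $C'>C$ you take.

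No other argument can recover prefactor $1$ either, because for $k>0$ the inequality as literally written is false. Take $p=2$, $k=1$, $d=2$ and $F=\|u\|_{H^1}+\|b\|_{H^1}$. The pressure drops out, and the top-order part of $F'$ is
\begin{equation*}
\langle b\cdot\nabla\nabla b,\nabla u\rangle\Bigl(\|\nabla u\|_{L^2}^{-1}-\|\nabla b\|_{L^2}^{-1}\Bigr),
\end{equation*}
with everything else $O(MF)$. This term cancels only when $\|\nabla u\|_{L^2}=\|\nabla b\|_{L^2}$.

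For a counterexample, let $b_0$ equal $e_1$ on a ball of radius $R$, plus a divergence-free perturbation $\approx N^{-1}\cos(Nx_1)e_2$ on the half ball. Let $u_0$ be a tiny multiple (with suitable sign) of $N^{-1}\sin(Nx_1)e_2$ there. Then $M\sim 1$ and $F(0)\sim R$, but $F'(0)\gtrsim NR$. Physically, Alfvén waves along the large-scale field exchange kinetic and magnetic energy on the time scale $N^{-1}$, which $\|\nabla(u,b)\|_{L^\infty}$ does not see.

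Only combined quantities obey a prefactor-free Gr\"onwall bound, such as $\sum_\pm\|z^\pm\|_{W^{k,p}}$, or $(\|u\|_{H^k}^2+\|b\|_{H^k}^2)^{1/2}$ when $p=2$. What your argument proves, and what is true, is
\begin{equation*}
\|u(t)\|_{W^{k,p}}+\|b(t)\|_{W^{k,p}}\le C_{k,p}\bigl(\|u_0\|_{W^{k,p}}+\|b_0\|_{W^{k,p}}\bigr)e^{C_{k,p}Mt}.
\end{equation*}
State this in place of the absorption claim. This form suffices for the only purpose such an estimate serves here, namely continuing the smooth solution up to $t^*$.
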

    We will also use the standard differential form of Gronwall's inequality.
    \begin{lemma}\label{lem:gronwall}
    	Let $\eta(t)$ be a nonnegative, absolutely continuous function on $[0, T]$ satisfying
    	\begin{equation*}
    	\frac{\d}{\d t}\eta(t) \le \alpha(t)\eta(t) + \beta(t)
    	\end{equation*}
    	for nonnegative, integrable functions $\alpha$ and $\beta$ on $[0, T]$. Then, for all $t \in [0, T]$,
    	\begin{equation*}
    	\eta(t) \le \exp\left(\int_0^t \alpha(\tau)\, \d\tau\right) \left( \eta(0) + \int_0^t \beta(\tau)\, \d\tau \right).
    	\end{equation*}
    \end{lemma}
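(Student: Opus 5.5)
The plan is the standard integrating-factor argument, with a little care because $\eta$ is only absolutely continuous and $\alpha,\beta$ are only integrable. Set
\begin{equation*}
A(t)=\int_0^t\alpha(\tau)\,\d\tau ,\qquad t\in[0,T].
\end{equation*}
Since $\alpha\in L^1(0,T)$ is nonnegative, $A$ is absolutely continuous, nondecreasing, and satisfies $A'=\alpha$ almost everywhere, with $0\le A(t)\le \|\alpha\|_{L^1(0,T)}$. Because $\tau\mapsto e^{-\tau}$ is Lipschitz on the bounded range of $A$, the function $e^{-A(t)}$ is also absolutely continuous on $[0,T]$. A product of two absolutely continuous functions on a compact interval is absolutely continuous, so
\begin{equation*}
\phi(t):=e^{-A(t)}\eta(t)
\end{equation*}
is absolutely continuous on $[0,T]$.

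Next I compute $\phi'$. By the product rule, which holds almost everywhere for absolutely continuous functions, and the assumed differential inequality (also understood almost everywhere), for a.e.\ $t$,
\begin{equation*}
\phi'(t)=e^{-A(t)}\bigl(\eta'(t)-\alpha(t)\eta(t)\bigr)\le e^{-A(t)}\beta(t)\le \beta(t).
\end{equation*}
The last step uses $A\ge 0$ and $\beta\ge 0$.

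Finally, since $\phi$ is absolutely continuous, the fundamental theorem of calculus for Lebesgue integrals gives
\begin{equation*}
\phi(t)=\phi(0)+\int_0^t\phi'(\tau)\,\d\tau\le \eta(0)+\int_0^t\beta(\tau)\,\d\tau .
\end{equation*}
Here $\phi(0)=\eta(0)$ because $A(0)=0$. Multiplying through by $e^{A(t)}$ yields the claimed bound.

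There is no genuine obstacle in this argument. The only point to check is the regularity: the product $e^{-A}\eta$ must be absolutely continuous, so that integrating its a.e.\ derivative recovers the function. The estimate is then simply the integrated form of the inequality $\phi'\le\beta$. Nonnegativity of $\eta$ is not actually needed. Nonnegativity of $\alpha$ and $\beta$ is used only to replace $e^{-A}\beta$ by $\beta$, which produces the stated, slightly weaker form of the conclusion.
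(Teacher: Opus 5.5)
Your proof is correct. The paper supplies no proof of this lemma and simply quotes it as the standard differential form of Gronwall's inequality. Your integrating-factor argument, including the check that $e^{-A}\eta$ is absolutely continuous so that integrating its a.e.\ derivative recovers the function, is the standard proof that fills this in.
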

    \section{The approximate solution}\label{sec-approx}
    In this section, we explicitly construct the initial data $(u_0,b_0)$ and the corresponding approximate solutions for the 3D MHD equations under the Magnetic-solo ansatz.
    
    \subsection{Construction of initial data}
    Our construction begins with a carefully designed anisotropic geometry for the initial data: a purely poloidal, swirl-free velocity field and a purely toroidal magnetic field. To capture the high-frequency oscillations and control the geometric support, we introduce two parameters $\mu\gg\nu\gg 1$, where $\mu$ represents the magnitude of spatial derivatives and $\nu^{-1}$ characterizes the distance from the support region from the symmetry axis.

    Let $\mu\gg 1$ be a large parameter. We set
    \begin{equation}\begin{cases}
    \gamma=\frac{\frac{5}{2}-s}{100}>0,\\
    \nu = \mu^{1-\gamma}.
    \end{cases}
    \end{equation}
    \begin{figure}[htbp]
    	\centering
    	\includegraphics[width=0.82\textwidth]{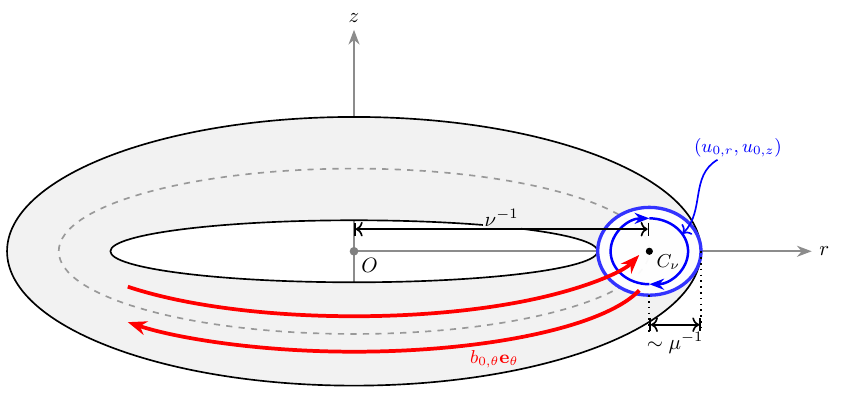}
    	\caption{3D schematic of the initial data construction. The velocity field $(u_{0,r}, u_{0,z})$ (blue) is purely poloidal, forming a steady 2D Euler vortex ring in the cross-section. The magnetic field $b_{0,\theta}\mathbf{e}_\theta$ (red) is purely toroidal, with opposite orientations in the upper and lower parts of the cross-section.}
    	\label{fig:3d_torus}
    \end{figure}
    Using the shifted polar coordinates $(\rho, \phi)$ centered at $(r, z) = (\nu^{-1}, 0)$ in the $rz$-plane, we  define
    \begin{equation}\label{u0-ideal}
    \begin{cases}
    u_{0,\theta}(z,r) =0 \\
    u_{0,r}(z,r) = -\varepsilon^2 \mu^{1 - s}\nu^{\frac{1}{2}}f_u'(\mu \rho)\partial_z\rho \\
    u_{0,z}(z,r) = \varepsilon^2 \mu^{1 - s}\nu^{\frac{1}{2}}f_u'(\mu \rho)\partial_r\rho \\
    \end{cases}
    \end{equation}
    and
    \begin{equation}\label{b0-ideal}
    \begin{cases}
    b_{0,\theta}(z,r) = \varepsilon^2 \mu^{1 - s}\nu^{\frac{1}{2}}g_b(\mu \rho)\sin (\phi) \\
    b_{0,r}(z,r) = 0 \\
    b_{0,z}(z,r) = 0
    \end{cases}
    \end{equation}
    where $f_u, g_b\in C^{\infty}_c(\R)$ with $g_b\not\equiv0$ such that
    \begin{equation}\label{suppfg}
    \begin{cases}
    \operatorname{supp} f_u \subset \left\{\frac{1}{2}\leq \rho \leq 2\right\} \\
    \operatorname{supp} g_b \subset \left\{1\leq \rho \leq \frac{3}{2}\right\}\\
    f_u'(\rho) = 1 \quad \text{for}~ 1\leq \rho \leq \frac{3}{2}.
    \end{cases}
    \end{equation}
	In our construction, we assume a purely poloidal velocity field and a purely azimuthal magnetic field, which implies $u_{0,\theta}=0$ and $b_{0,r}=b_{0,z}=0$. Under this geometric ansatz, the 2D divergence-free condition for the magnetic field in the $rz$-plane, $\partial_r b_{0,r} + \partial_z b_{0,z}=0$, is trivially satisfied.
	
	Consequently, the tuple $(u_{0,r},u_{0,z}, b_{0,r},b_{0,z})$ satisfies the 2D stationary ideal MHD equations, provided that the main poloidal velocity $(u_{0,r},u_{0,z})$ is a solution to the 2D stationary incompressible Euler equations
	\begin{equation}\label{2d_euler}
	\begin{cases}
	(u_{0,r}\partial_{r} + u_{0,z}\partial_{z})u_{0,r} + \partial_{r}p_{0} = 0 \\
	(u_{0,r}\partial_{r} + u_{0,z}\partial_{z})u_{0,z} + \partial_{z}p_{0} = 0 \\
	\partial_{r}u_{0,r} + \partial_{z}u_{0,z} = 0
	\end{cases}
	\quad \text{in } (r,z)\in \R^{2},
	\end{equation}
	with the scalar pressure
	\begin{equation}\label{pressure(r,z)}
	p_0(r,z)=-\varepsilon^4\mu^{2-2s}\nu
	\int_\rho^\infty
	\frac{\left[f_u'(\mu\bar\rho)\right]^2}{\bar\rho}\,\d\bar\rho.
	\end{equation}
	
	To ensure that $\operatorname{div} u_0 = 0$ in the 3D cylindrical coordinates, we introduce a divergence corrector for the velocity field
	\begin{equation*}
	u_c(r, z) := \varepsilon^2 \mu^{-s} \nu^{\frac{1}{2}} \frac{f_u(\mu\rho)}{r}.
	\end{equation*}
	Since the magnetic field $b_0$ is purely toroidal and axisymmetric, it naturally satisfies 
	\begin{equation}\label{divb0}
	 \operatorname{div} b_0 = \frac{1}{r}\partial_\theta b_{0,\theta} = 0,
	\end{equation}
	thus requiring no divergence corrector.
	
	Now, we define the initial data $u_0, b_0 : \R^3 \to \R^3$ for the ideal MHD equations in cylindrical coordinates by
	\begin{equation}\label{ini-data}
	\begin{cases}
	u_0 = u_{0,r} \mathbf{e}_r + (u_{0,z} + u_c) \mathbf{e}_z \\
	b_0 = b_{0,\theta} \mathbf{e}_\theta.
	\end{cases}
	\end{equation}
	
	\begin{remark}
	The constructed initial data $(u_0, b_0)$ are smooth, divergence-free vector fields, compactly supported in a thin solid torus at a distance $\sim \nu^{-1}$ from the origin, with cross-sectional diameter $\sim \mu^{-1}$. Crucially, the velocity field is swirl-free, while the magnetic field is purely toroidal.
	\end{remark}
	
	\subsection{Basic estimates of the initial data}
	In this subsection, we demonstrate that the initial data $(u_0, b_0)$ constructed above matches the required Sobolev regularity and smallness conditions.
	\begin{lemma}\label{lem:initial_estimates}
		The vector fields $u_0, b_0 : \R^3 \to \R^3$ defined by \eqref{ini-data} are smooth, divergence-free, and compactly supported in a thin solid torus of measure $\sim \mu^{-2}\nu^{-1}$. In addition, for any integer $k \ge 0$ and $1 \le p \le \infty$, there exists a constant $C_{k,p} > 0$ independent of $\varepsilon, \mu$ and $\nu$ such that
		\begin{equation}\label{eq:Wkp_bound}
		\|u_0\|_{W^{k,p}(\R^3)} + \|b_0\|_{W^{k,p}(\R^3)} \le C_{k,p} \varepsilon^2 \mu^{k-s} \mu^{1-\frac{2}{p}} \nu^{\frac{1}{2}-\frac{1}{p}}.
		\end{equation}
		In particular, for $0 < s <\frac{5}{2}$, there exists a universal constant $\varepsilon_0 \in (0, 1)$ such that for any $0 < \varepsilon \le \varepsilon_0$,
		\begin{equation}
		\|u_0\|_{H^s(\R^3)} + \|b_0\|_{H^s(\R^3)} \le \varepsilon.
		\end{equation}
	\end{lemma}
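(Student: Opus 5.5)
We verify the four claims in turn: smoothness and support, the divergence-free property, the $W^{k,p}$ bound \eqref{eq:Wkp_bound}, and finally the $H^s$ smallness.

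\textbf{Smoothness, support and divergence.} Since $\mu\gg\nu$, the support of $f_u(\mu\rho)$, $f_u'(\mu\rho)$ and $g_b(\mu\rho)$ lies in the annulus $\{\tfrac{1}{2}\mu^{-1}\le\rho\le 2\mu^{-1}\}$ in the $rz$-plane. This annulus sits at distance $\sim\nu^{-1}\gg\mu^{-1}$ from the axis $r=0$. Consequently:
\begin{itemize}
\item $\rho$ is smooth there, and $r\sim\nu^{-1}$ on the support;
\item the factors $1/r$ in $u_c$ and the basis vectors $\mathbf{e}_r,\mathbf{e}_\theta$ are smooth on the support;
\item the angular factor $\sin\phi=z/\rho$ is smooth away from $\rho=0$.
\end{itemize}
Hence both fields are $C_c^\infty$, supported in a solid torus of cross-sectional area $\sim\mu^{-2}$ and length $\sim\nu^{-1}$, so of measure $\sim\mu^{-2}\nu^{-1}$.

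For $\operatorname{div} b_0$ we use \eqref{divb0}. For $u_0$, compute $\operatorname{div}u_0=\partial_r u_{0,r}+\tfrac{1}{r}u_{0,r}+\partial_z(u_{0,z}+u_c)$.
\begin{itemize}
\item The pair $(u_{0,r},u_{0,z})=\varepsilon^2\mu^{-s}\nu^{1/2}\,(-\partial_z,\partial_r)\big(f_u(\mu\rho)\big)$ is a 2D perpendicular gradient, so $\partial_ru_{0,r}+\partial_zu_{0,z}=0$.
\item The remaining terms are $\tfrac1r u_{0,r}+\partial_z u_c$. Since $r$ does not depend on $z$, we have $\partial_z u_c=\varepsilon^2\mu^{-s}\nu^{1/2}\,r^{-1}\partial_z\big(f_u(\mu\rho)\big)$. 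This is exactly $-\tfrac1r u_{0,r}$, so the sum vanishes.
\end{itemize}

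\textbf{The $W^{k,p}$ bound.} Each component has the form (amplitude)$\times F(\mu\rho,\phi)$, multiplied possibly by $1/r$ and by the basis vectors. The amplitude is $\varepsilon^2\mu^{1-s}\nu^{1/2}$ for $u_{0,r},u_{0,z},b_0$, and $\varepsilon^2\mu^{-s}\nu^{1/2}\cdot\nu$ for $u_c$, since $r^{-1}\sim\nu$.

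Each Cartesian derivative costs at most a factor $\mu$:
\begin{itemize}
\item derivatives falling on $F(\mu\rho,\phi)$ cost $\mu$, because $\partial\phi\sim\rho^{-1}\sim\mu$ on the support;
\item derivatives falling on $1/r$ or on $\mathbf{e}_r,\mathbf{e}_\theta$ cost only $\nu\ll\mu$.
\end{itemize}
So $\|\nabla^k\cdot\|_{L^\infty}\lesssim(\text{amplitude})\,\mu^k$. Combining with the support measure gives $\|\cdot\|_{L^p}\lesssim\|\cdot\|_{L^\infty}(\mu^{-2}\nu^{-1})^{1/p}$. Together,
\[
\|u_0\|_{W^{k,p}}+\|b_0\|_{W^{k,p}}\lesssim\varepsilon^2\mu^{k+1-s}\nu^{1/2}\,\mu^{-2/p}\nu^{-1/p},
\]
which is \eqref{eq:Wkp_bound}. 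The $u_c$ contribution is smaller by the factor $\nu/\mu\ll1$.

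\textbf{The $H^s$ bound.} Take $p=2$: then $\mu^{1-2/p}\nu^{1/2-1/p}=1$, and \eqref{eq:Wkp_bound} gives $\|\cdot\|_{H^k}\lesssim\varepsilon^2\mu^{k-s}$ for every integer $k$. For non-integer $s$, interpolate between $k=\lfloor s\rfloor$ and $k=\lfloor s\rfloor+1$. Since $\|f\|_{H^s}\le\|f\|_{H^{k_0}}^{1-\vartheta}\|f\|_{H^{k_1}}^{\vartheta}$ with $s=(1-\vartheta)k_0+\vartheta k_1$, the powers of $\mu$ combine to $\mu^{s-s}=1$. Therefore
\[
\|u_0\|_{H^s}+\|b_0\|_{H^s}\le C_s\varepsilon^2,
\]
and this is at most $\varepsilon$ once $\varepsilon\le\varepsilon_0:=\min(1,C_s^{-1})$.

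The only delicate step is making sure the constants are uniform in $\mu,\nu$. The main obstacle is bounding derivatives of the angular factor $\sin\phi$ and of the cylindrical basis vectors uniformly. This works because the support keeps $\rho\sim\mu^{-1}$ and $r\sim\nu^{-1}$, with $\nu\le\mu$.
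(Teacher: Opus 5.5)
Your proposal is correct and follows essentially the same route as the paper. Both arguments use the same steps:
\begin{itemize}
\item the cylindrical divergence computation, where your stream-function observation $(u_{0,r},u_{0,z})=(-\partial_z,\partial_r)\psi$ with $\psi=\varepsilon^2\mu^{-s}\nu^{1/2}f_u(\mu\rho)$ makes explicit the cancellation $\partial_r u_{0,r}+\partial_z u_{0,z}=0$ that the paper uses without comment;
\item pointwise bounds costing one factor $\mu$ per derivative, multiplied by the support measure $\mu^{-2}\nu^{-1}$;
\item the corrector $u_c$ being smaller by $\nu/\mu$;
\item interpolation at $p=2$, giving $\|u_0\|_{H^s}+\|b_0\|_{H^s}\lesssim\varepsilon^2$.
\end{itemize}
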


   \begin{proof}
   	We first verify the divergence-free condition. The zero divergence of $b_0$ is shown in \eqref{divb0}. For the velocity field, a direct computation verifies that $u_0$ is divergence‑free. Indeed,
   	\begin{align*}
   	\operatorname{div} u_0 
   	&= \partial_r u_{0,r} + \frac{u_{0,r}}{r} + \partial_z u_{0,z} + \partial_z u_c \\
   	&= \frac{u_{0,r}}{r} + \partial_z u_c \\
   	&= \varepsilon^2 \mu^{1-s} \nu^{\frac{1}{2}} \left( -\frac{f_u'(\mu\rho)\,\partial_z\rho}{r} + \frac{f_u'(\mu\rho)\,\partial_z\rho}{r} \right) = 0.
   	\end{align*}
   	Next, we establish the $W^{k,p}$ estimates. The support of the profile functions $f_u(\mu\rho)$ and $g_b(\mu\rho)$ is strictly confined within $\rho \sim \mu^{-1}$. Thus, the support $\mathcal{R}_\mu \subset \R^3$ is a thin solid torus centered at $r = \nu^{-1}$, and its Lebesgue measure is approximately $\text{Vol}(\mathcal{R}_\mu) \sim \mu^{-2}\nu^{-1}$.
   	
   	Noticing that each spatial derivative yields a factor of $\mu$, we obtain the point-wise bounds for the components $u_{0,r}, u_{0,z}$, and $b_{0,\theta}$
   	\begin{equation*}
   	|\nabla^k u_{0,r}| + |\nabla^k u_{0,z}| + |\nabla^k b_{0,\theta}| \lesssim \varepsilon^2 \mu^{k+1-s} \nu^{\frac{1}{2}}.
   	\end{equation*}
   	Integrating over the support $\mathcal{R}_\mu$ directly gives
   	\begin{align*}
   	\|\nabla^k u_{0,r}\|_{L^p} + \|\nabla^k u_{0,z}\|_{L^p} + \|\nabla^k b_{0,\theta}\|_{L^p} &\lesssim \varepsilon^2 \mu^{k+1-s} \nu^{\frac{1}{2}} (\mu^{-2}\nu^{-1})^{\frac{1}{p}} \\
   	&= \varepsilon^2 \mu^{k-s} \mu^{1-\frac{2}{p}} \nu^{\frac{1}{2}-\frac{1}{p}}.
   	\end{align*}
   	For the divergence corrector $u_c = \varepsilon^2 \mu^{-s} \nu^{\frac{1}{2}} \frac{f_u(\mu\rho)}{r}$, using $r \sim \nu^{-1}$ on the support, we similarly have
   	\begin{align*}
   	\|\nabla^k u_c\|_{L^p} &\lesssim \varepsilon^2 \mu^{k-s} \nu^{\frac{3}{2}} (\mu^{-2}\nu^{-1})^{\frac{1}{p}} \notag\\
   	&= \varepsilon^2 \mu^{k-s} \mu^{1-\frac{2}{p}} \nu^{\frac{1}{2}-\frac{1}{p}}\left(\mu^{-1}\nu\right).
   	\end{align*}
   	Since $\nu = \mu^{1-\gamma}$ and $\mu \gg 1$, comparing $u_c$ with the leading-order terms yields a ratio of $\mu^{-1}\nu = \mu^{-\gamma} \ll 1$. Therefore, the corrector $u_c$ is higher-order small and can be absorbed, which completes the proof of the $W^{k,p}$ bounds.
   	
   	Finally, taking $k = s$ (with interpolation for fractional $s$) and $p = 2$, the scaling factors in $\mu$ and $\nu$ cancel, yielding
   	\begin{equation*}
   	\|u_0\|_{H^s(\R^3)} + \|b_0\|_{H^s(\R^3)} \le C \varepsilon^2.
   	\end{equation*}
   	By choosing $\varepsilon_0 \in (0, 1)$ sufficiently small such that $C \varepsilon_0 \le 1$, we conclude that $\|u_0\|_{H^s} + \|b_0\|_{H^s} \le \varepsilon$ holds for any $0 < \varepsilon \le \varepsilon_0$. 
   \end{proof}
	
	\subsection{The approximate solution}
	Due to the specific decoupled geometry of the initial data $(u_0, b_0)$, we approximate the exact solution of the ideal MHD equations by keeping the poloidal velocity stationary and transporting the purely toroidal magnetic field.   We refer to this approximate evolution as the \textbf{Magnetic-solo ansatz}.
	
	Under this ansatz, the approximate solution $(\bar{u}, \bar{b}) : \R^+ \times \R^3 \to \R^3 \times \R^3$ is given by
	\begin{equation}\label{approx_sol}
	\begin{aligned}
	\bar{u}(t, x) & = u_0(x)= u_{0,r} \mathbf{e}_r + (u_{0,z} + u_c) \mathbf{e}_z, \\
	\bar{b}(t, x) &= \bar{b}_\theta \mathbf{e}_\theta,
	\end{aligned}
	\end{equation}
	where $\bar{u}$ is independent of time, and the toroidal component $\bar{b}_\theta$ is the unique smooth solution to the linear transport equation driven by the leading-order poloidal velocity, satisfying  
	\begin{equation}\label{transport_b}
	\begin{cases}
	\partial_t \bar{b}_\theta + (u_{0,r} \partial_r + u_{0,z} \partial_z)\bar{b}_\theta = 0,\\ 
    \bar{b}_{\theta}|_{t=0} = b_{0,\theta},
	\end{cases}
	\quad \text{for}~ (t,r,z)\in \R^+ \times \R^2.
	\end{equation}
	In the shifted polar coordinates $(\rho, \phi)$, recall the chain rule $\partial_\phi = -\rho \sin(\phi) \partial_r + \rho \cos(\phi) \partial_z$. We can rewrite the convective operator by \eqref{u0-ideal}
	\begin{align*}
	u_{0,r} \partial_r + u_{0,z} \partial_z &= \left(-\varepsilon^2 \mu^{1-s} \nu^{\frac{1}{2}} f_u'(\mu\rho) \sin(\phi) \right) \partial_r + \left(\varepsilon^2 \mu^{1-s} \nu^{\frac{1}{2}} f_u'(\mu\rho) \cos(\phi) \right) \partial_z \\
	&= \frac{\varepsilon^2 \mu^{1-s} \nu^{\frac{1}{2}} f_u'(\mu\rho)}{\rho} \left( -\rho \sin(\phi) \partial_r + \rho \cos(\phi) \partial_z \right) \\
	&= \frac{\varepsilon^2 \mu^{1-s} \nu^{\frac{1}{2}} f_u'(\mu\rho)}{\rho} \partial_\phi.
	\end{align*}
	Since $f_u'(\mu\rho) = 1$ on the support of $g_b(\mu\rho)$, the linear transport equation \eqref{transport_b} simplifies to
	\begin{equation*}
	\partial_t \bar{b}_\theta + \frac{\varepsilon^2 \mu^{1-s}\nu^{1/2}}{\rho} \partial_\phi \bar{b}_\theta = 0. 
	\end{equation*}
	Solving this equation with the initial data $\bar{b}_\theta|_{t=0} = b_{0,\theta}$ in \eqref{b0-ideal} yields
	\begin{equation}\label{explicit_b}
	\bar{b}_\theta(t, r, z) = \varepsilon^2 \mu^{1-s} \nu^{\frac{1}{2}} g_b(\mu\rho) \sin\left( \phi - t \frac{\varepsilon^2 \mu^{1-s} \nu^{\frac{1}{2}}}{\rho} \right).
	\end{equation}
	
	To capture the norm inflation, we define the critical time $t^*$ as
	\begin{equation}\label{t*}
	t^* = \varepsilon^{-N-2} \mu^{-2+s} \nu^{-\frac{1}{2}},
	\end{equation}
	where $N = \max\left\{\frac{10}{s}, 100\right\}$. Notice that $t^* \to 0$ as $\mu \to \infty$.
	
	\begin{prop}\label{prop:approximation}
		The vector fields $\left(\bar{u}, \bar{b}\right)$ are smooth, divergence-free, and satisfy the following estimates for any $k \ge 0$ and $1 \le p \le \infty$ on the interval $t \in [0, t^*]$,
		\begin{equation}\label{eq:approx_Wkp}
		\begin{aligned}
		\|\bar{u}(t)\|_{W^{k,p}(\R^3)} &\le C_{k,p} \varepsilon^2 \mu^{k-s} \mu^{1-\frac{2}{p}} \nu^{\frac{1}{2}-\frac{1}{p}}, \\
		\|\bar{b}(t)\|_{W^{k,p}(\R^3)} &\le C_{k,p} \varepsilon^{2-kN} \mu^{k-s} \mu^{1-\frac{2}{p}} \nu^{\frac{1}{2}-\frac{1}{p}}.
		\end{aligned}
		\end{equation}
		Moreover, $\left(\bar{u}, \bar{b}\right)$ is an approximate solution to the ideal MHD equations. There exist a smooth pressure $\bar{P} = p_0(r,z)$ and error fields $\overline{E}_u, \overline{E}_b$ such that
		\begin{equation}\label{error_eqs}
		\begin{cases}
		\partial_t \bar{u} + \bar{u} \cdot \nabla \bar{u} - \bar{b} \cdot \nabla \bar{b} + \nabla \bar{P} = \overline{E}_u, \\
		\partial_t \bar{b} + \bar{u} \cdot \nabla \bar{b} - \bar{b} \cdot \nabla \bar{u} = \overline{E}_b,\\
		(\bar{u},\bar{b})|_{t = 0} = (u_{0}, b_{0}),
		\end{cases}
		\end{equation}
		where they satisfy the uniform $L^2$ bound for $t \in [0, t^*]$,
		\begin{equation}\label{error_L2}
		\|\nabla^k\overline{E}_u(t)\|_{L^2(\R^3)}+\|\nabla^k\overline{E}_b(t)\|_{L^2(\R^3)} \le C_{\varepsilon,k}\mu^{k-s}\left(\mu^{-1}\nu\right)\mu^{2-s} \nu^{\frac{1}{2}}.
		\end{equation}
	\end{prop}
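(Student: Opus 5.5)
The plan is to treat the three assertions in turn: the $W^{k,p}$ bounds, the algebraic identities \eqref{error_eqs} with explicit residuals, and the $L^2$ bounds \eqref{error_L2}. Smoothness, divergence-freeness and the bounds for $\bar u=u_0$ are already contained in Lemma \ref{lem:initial_estimates}. For $\bar b$, note that $\bar b=\bar b_\theta\mathbf{e}_\theta$ is axisymmetric, so $\operatorname{div}\bar b=r^{-1}\partial_\theta\bar b_\theta=0$. Smoothness follows from the explicit formula \eqref{explicit_b}. The singularity $1/\rho$ causes no trouble, since $g_b(\mu\rho)$ localizes to $\rho\in[\mu^{-1},\tfrac32\mu^{-1}]$.

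For the $W^{k,p}$ bound on $\bar b$, I will differentiate \eqref{explicit_b}. Each derivative falling on $g_b(\mu\rho)$ or on $\sin\phi$ costs a factor $\mu$. A derivative falling on the phase $t\varepsilon^2\mu^{1-s}\nu^{1/2}\rho^{-1}$ costs at most $t\varepsilon^2\mu^{1-s}\nu^{1/2}\rho^{-2}\lesssim t^*\varepsilon^2\mu^{3-s}\nu^{1/2}=\varepsilon^{-N}\mu$ on $[0,t^*]$, by the definition \eqref{t*}. Hence $|\nabla^k\bar b|\lesssim\varepsilon^{2-kN}\mu^{k+1-s}\nu^{1/2}$ pointwise. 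Integrating over the support torus of measure $\sim\mu^{-2}\nu^{-1}$ gives the claim, exactly as in Lemma \ref{lem:initial_estimates}.

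For the residuals, I will define $\overline E_u,\overline E_b$ by \eqref{error_eqs} with $\bar P=p_0$ and compute them in cylindrical coordinates.
\begin{itemize}
\item \emph{Velocity residual.} Since $\bar u$ is time-independent and swirl-free, $\partial_t\bar u+\bar u\cdot\nabla\bar u+\nabla p_0$ reduces to the 2D Euler expression \eqref{2d_euler}, which vanishes. The leftover terms all involve the corrector $u_c$: these are $u_c\partial_z(u_{0,r},u_{0,z}+u_c)$ and $(u_{0,r}\partial_r+u_{0,z}\partial_z)u_c$.
\item \emph{Lorentz force.} For a toroidal field, $\bar b\cdot\nabla\bar b=-\bar b_\theta^2 r^{-1}\mathbf{e}_r$. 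This term enters $\overline E_u$.
\item \emph{Magnetic residual.} For swirl-free $\bar u$, the $\theta$-component of $\bar u\cdot\nabla\bar b-\bar b\cdot\nabla\bar u$ is $(\bar u_r\partial_r+\bar u_z\partial_z)\bar b_\theta-\bar u_r\bar b_\theta/r$, and the other components vanish. By \eqref{transport_b}, $\overline E_b=\bigl(u_c\partial_z\bar b_\theta-u_{0,r}\bar b_\theta/r\bigr)\mathbf{e}_\theta$.
\end{itemize}
Every error term therefore carries either $u_c$ or a factor $1/r\sim\nu$ in place of a derivative $\sim\mu$. Relative to a quadratic term of size $\varepsilon^{O(1)}\mu^{2-s}\nu^{1/2}\cdot\mu^{1-s}\nu^{1/2}$, this gains exactly $\mu^{-1}\nu$.

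The main obstacle is the $L^2$ bookkeeping, in particular making the Lorentz term fit the right-hand side of \eqref{error_L2}. I will estimate it pointwise by $\nu|\nabla^k(\bar b_\theta^2)|\lesssim\varepsilon^{4-kN}\nu\mu^{2+k-2s}\nu$ and multiply by the support measure $(\mu^{-2}\nu^{-1})^{1/2}$. This gives $\varepsilon^{4-kN}\mu^{k-s}(\mu^{-1}\nu)\mu^{2-s}\nu^{1/2}$, with the $\varepsilon$-loss absorbed into $C_{\varepsilon,k}$. The remaining terms are handled the same way. The $u_c\partial_z\bar b_\theta$ term costs $\varepsilon^{2}\mu^{-s}\nu^{3/2}\cdot\varepsilon^{2-(k+1)N}\mu^{k+2-s}\nu^{1/2}$ times the support factor. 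The $u_{0,r}\bar b_\theta/r$ term and the $u_c$-terms in $\overline E_u$ are bounded likewise, using Lemma \ref{lem:initial_estimates} and the pointwise bounds for $\bar b$ above. Throughout, the $\varepsilon$-dependence (powers $\varepsilon^{-kN}$) goes into $C_{\varepsilon,k}$, which is allowed since $\varepsilon$ is fixed before $\mu\to\infty$.
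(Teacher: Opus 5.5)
Your proposal is correct and follows essentially the same route as the paper. You differentiate the explicit formula \eqref{explicit_b} with a per-derivative cost of at most $\varepsilon^{-N}\mu$ on $[0,t^*]$, and the residuals you find ($u_c\partial_z u_{0,r}+\bar b_\theta^2/r$, the four $u_c$-terms in the $z$-component, and $u_c\partial_z\bar b_\theta-u_{0,r}\bar b_\theta/r$) are exactly those the paper extracts from the axisymmetric system. Your amplitude-times-support-measure bookkeeping is equivalent to the paper's $L^2$--$L^\infty$ H\"older estimates, and tracking the $\varepsilon^{-kN}$ losses explicitly only makes precise what the paper absorbs into $C_{\varepsilon,k}$.
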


    \begin{proof}
    \noindent \textbf{Part 1: Estimates of $\bar{u}$ and $\bar{b}$} \\
    The $W^{k,p}$ estimates for $\bar{u}=u_{0,r} \mathbf{e}_r + (u_{0,z} + u_c) \mathbf{e}_z$ trivially follow from Lemma \ref{lem:initial_estimates} since $\bar{u}$ is stationary.
    
    For the toroidal magnetic field, the explicit formula is given by \eqref{explicit_b}. By the chain rule, each spatial differentiation yields an amplification factor bounded by \[\max\left\{\mu, t \left| \nabla ( \rho^{-1} \varepsilon^2 \mu^{1-s} \nu^{\frac{1}{2}} ) \right|\right\}.\] 
    On the support where $\rho \sim \mu^{-1}$, we have $\nabla(\rho^{-1}) \sim \rho^{-2} \sim \mu^2$. Hence, a spatial derivative yields a maximum factor bounded by
    \begin{equation*}
    \max\left\{\mu, t \varepsilon^2 \mu^{3-s} \nu^{\frac{1}{2}} \right\}.
    \end{equation*}
    For $t \le t^*$ with $t^*$ defined in \eqref{t*}, we have $t \varepsilon^2 \mu^{3-s} \nu^{\frac{1}{2}} \le \varepsilon^{-N} \mu$. Thus, the maximum amplification factor per differentiation in \eqref{explicit_b} is $\varepsilon^{-N} \mu$. Multiplying the amplitude and the $L^p$ measure factor $\mu^{-\frac{2}{p}}\nu^{-\frac{1}{p}}$, we obtain
    \begin{align*}
    \|\nabla^k \bar{b}(t)\|_{L^p(\R^3)} &\le C_k \left(\varepsilon^2 \mu^{1-s} \nu^{\frac{1}{2}}\right) \left(\varepsilon^{-N}\mu\right)^k\left(\mu^{-\frac{2}{p}}\nu^{-\frac{1}{p}}\right) \notag\\
    &= C_k \varepsilon^{2-kN} \mu^{k-s}\mu^{1-\frac{2}{p}} \nu^{\frac{1}{2}-\frac{1}{p}}.
    \end{align*}
    
    \noindent \textbf{Part 2: Derivation and estimates of the error fields} \\
    In cylindrical coordinates $(r, \theta, z)$, assuming axisymmetry, \eqref{error_eqs} can be rewritten as
    \begin{equation}\label{axisymmetric_mhd}
    \begin{cases}
    \partial_t \bar{u}_\theta + \bar{u}_r \partial_r \bar{u}_\theta + \bar{u}_z \partial_z \bar{u}_\theta + \frac{\bar{u}_r \bar{u}_\theta}{r} - \left( \bar{b}_r \partial_r \bar{b}_\theta + \bar{b}_z \partial_z \bar{b}_\theta + \frac{\bar{b}_r \bar{b}_\theta}{r} \right) = \overline{E}_{u,\theta}, \\[8pt]
    \partial_t \bar{u}_r + \bar{u}_r \partial_r \bar{u}_r + \bar{u}_z \partial_z \bar{u}_r - \frac{\bar{u}_\theta^2}{r} + \partial_r \bar{P} - \left(\bar{b}_r \partial_r \bar{b}_r + \bar{b}_z \partial_z \bar{b}_r - \frac{\bar{b}_\theta^2}{r} \right) = \overline{E}_{u,r}, \\[8pt]
    \partial_t \bar{u}_z + \bar{u}_r \partial_r \bar{u}_z + \bar{u}_z \partial_z \bar{u}_z + \partial_z \bar{P} - \left(\bar{b}_r \partial_r \bar{b}_z + \bar{b}_z \partial_z \bar{b}_z \right)= \overline{E}_{u,z}, \\[8pt]
    \partial_t \bar{b}_\theta + \bar{u}_r \partial_r \bar{b}_\theta + \bar{u}_z \partial_z \bar{b}_\theta + \frac{\bar{b}_r \bar{u}_\theta}{r} - \left( \bar{b}_r \partial_r \bar{u}_\theta + \bar{b}_z \partial_z \bar{u}_\theta + \frac{\bar{u}_r \bar{b}_\theta}{r} \right) = \overline{E}_{b,\theta}, \\[8pt]
    \partial_t \bar{b}_r + \bar{u}_r \partial_r \bar{b}_r + \bar{u}_z \partial_z \bar{b}_r - \left( \bar{b}_r \partial_r \bar{u}_r + \bar{b}_z \partial_z \bar{u}_r \right) = \overline{E}_{b,r}, \\[8pt]
    
    \partial_t \bar{b}_z + \bar{u}_r \partial_r \bar{b}_z + \bar{u}_z \partial_z \bar{b}_z - \left( \bar{b}_r \partial_r \bar{u}_z + \bar{b}_z \partial_z \bar{u}_z \right) = \overline{E}_{b,z}.
    \end{cases}
    \end{equation}
    We substitute the approximate solutions $\bar{u}_{\theta}=0$, $\bar{u}_r= u_{0,r}$, $\bar{u}_z = u_{0,z} + u_c$, $\bar{b}_r = \bar{b}_z = 0$, and the pressure $\bar{P} = p_0(r,z)$ from \eqref{pressure(r,z)} into the system \eqref{axisymmetric_mhd}. Recall that the leading-order terms satisfy the 2D stationary Euler equations \eqref{2d_euler} and $\bar{b}_{\theta}$ satisfies the transport equation \eqref{transport_b}, we obtain the nonzero components of the error fields
    \begin{equation*}
    \begin{cases}
    \overline{E}_{u,r} = u_c \partial_z u_{0,r} + \frac{\bar{b}_\theta^2}{r}, \\[8pt]
    \overline{E}_{u,z} = u_{0,r} \partial_r u_c + u_{0,z} \partial_z u_c + u_c \partial_z u_{0,z} + u_c \partial_z u_c, \\[8pt]
    \overline{E}_{b,\theta} = u_c \partial_z \bar{b}_\theta - \frac{u_{0,r} \bar{b}_\theta}{r}.
    \end{cases}
    \end{equation*}
    By differentiating the expressions for the error fields, we
    obtain an additional factor $\mu^k$, up to a constant depending on
    $\varepsilon$ and $k$. Hence, it suffices to focus on the corresponding $L^2$ estimates.
    
    Using the estimates from Lemma \ref{lem:initial_estimates} and the relation $\frac{1}{r} \sim \nu$, the following error estimates are obtained. For the radial error $\overline{E}_{u,r}$, we have
    \begin{align}\label{error-r}
    \|\overline{E}_{u,r}\|_{L^2(\R^3)} &\le \|u_c\|_{L^2} \|\partial_z u_{0,r}\|_{L^{\infty}} + \left\| \frac{1}{r} \right\|_{L^\infty} \|\bar{b}_\theta\|_{L^\infty} \|\bar{b}_\theta\|_{L^2} \notag\\
    &\lesssim \left(\varepsilon^2 \mu^{-1-s}\nu\right) \cdot \left(\varepsilon^2 \mu^{2-s}\nu^{\frac{1}{2}}\right)+\nu \cdot \left(\varepsilon^2 \mu^{1-s} \nu^{\frac{1}{2}}\right) \cdot \left(\varepsilon^2 \mu^{-s}\right) \notag\\
    &\lesssim \varepsilon^4 \mu^{1-2s} \nu^{\frac{3}{2}}.
    \end{align}
    For the axial error $\overline{E}_{u,z}$, we get
    \begin{align}\label{error-z}
    \|\overline{E}_{u,z}\|_{L^2(\R^3)}&\le \|u_{0,r}\|_{L^\infty} \|\partial_r u_c\|_{L^2}+\|u_{0,z}\|_{L^\infty} \|\partial_z u_c\|_{L^2}+\|u_c\|_{L^\infty} \|\partial_z u_{0,z}\|_{L^2}+\|u_c\|_{L^\infty} \|\partial_z u_c\|_{L^2} \notag\\
    &\lesssim \varepsilon^4 \mu^{1-2s} \nu^{\frac{3}{2}}+ \varepsilon^4 \mu^{-2s} \nu^{\frac{5}{2}}\notag\\
    &\lesssim \varepsilon^4 \mu^{1-2s} \nu^{\frac{3}{2}}.
    \end{align}
    For the toroidal magnetic error $\overline{E}_{b,\theta}$, we similarly obtain
    \begin{align}\label{error-theta}
    \|\overline{E}_{b,\theta}\|_{L^2(\R^3)} &\le \|u_c\|_{L^2} \|\partial_z \bar{b}_\theta\|_{L^\infty} + \left\| \frac{1}{r} \right\|_{L^\infty} \|u_{0,r}\|_{L^\infty} \|\bar{b}_\theta\|_{L^2} \notag\\
    &\lesssim_{\varepsilon} \left(\mu^{-s-1}\nu\right) \cdot \left(\mu^{2-s}\nu^{\frac{1}{2}}\right)+\nu\cdot\left(\mu^{1-s}\nu^{\frac{1}{2}}\right) \cdot \mu^{-s} \notag\\
    &\lesssim_{\varepsilon} \mu^{1-2s} \nu^{\frac{3}{2}}.
    \end{align}
    Summing the estimates \eqref{error-r}, \eqref{error-z}, and \eqref{error-theta}, we obtain the desired uniform $L^2$ bound for the total error field
    \begin{equation*}
    \|\overline{E}_u(t)\|_{L^2(\R^3)} + \|\overline{E}_b(t)\|_{L^2(\R^3)} \le C_\varepsilon \mu^{1-2s} \nu^{\frac{3}{2}},
    \end{equation*}
    which completes the proof of Proposition \ref{prop:approximation}.
    \end{proof}
	
	\begin{lemma}\label{lem:norm_inflation}
		There exists a universal constant $\varepsilon_0 \in (0, 1)$ such that for any $0 < \varepsilon \le \varepsilon_0$, there holds the lower bound
		\begin{equation*}
		\|\bar{b}(t^*)\|_{H^s(\R^3)} \ge \varepsilon^{-2}
		\end{equation*}
		where $t^*>0$ is the critical time defined in \eqref{t*}.
	\end{lemma}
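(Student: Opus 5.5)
We need a lower bound on $\|\bar b(t^*)\|_{H^s}$ for $0<s<\frac52$. The plan is to compute the $\dot H^1$ norm of $\bar b(t^*)$ from below using the explicit formula \eqref{explicit_b}, then interpolate between $L^2$ and a higher Sobolev norm to get down to $H^s$.

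\textbf{Step 1 (gradient lower bound).} Write the phase $\Phi=\phi - t\,\varepsilon^2\mu^{1-s}\nu^{1/2}\rho^{-1}$. At $t=t^*$, by \eqref{t*} the radial derivative of the phase is
\[
\partial_\rho\Phi = t^*\varepsilon^2\mu^{1-s}\nu^{\frac12}\rho^{-2}.
\]
On the support of $g_b(\mu\rho)$ we have $\rho\sim\mu^{-1}$, so $\partial_\rho\Phi\sim \varepsilon^{-N}\mu$. The derivative $\partial_\rho\bar b_\theta$ splits into the term
\[
\varepsilon^2\mu^{1-s}\nu^{\frac12}g_b(\mu\rho)\cos\Phi\,\partial_\rho\Phi,
\]
which is of size $\varepsilon^{-N}$ relative to the remaining term $\mu g_b'\sin\Phi$, whose size is $\mu$. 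Since $|\nabla \bar b|\ge|\partial_\rho\bar b_\theta|$ up to lower-order contributions, and the $\mathbf e_\theta$ derivative terms are of order $\nu\ll\mu$, we get
\[
\|\nabla\bar b(t^*)\|_{L^2}\gtrsim \varepsilon^{2-N}\mu^{2-s}\nu^{\frac12}\,\|g_b(\mu\rho)\cos\Phi\|_{L^2}.
\]
For the $L^2$ mass of $g_b\cos\Phi$, use polar coordinates with Jacobian $r\rho\,d\rho\,d\phi\,d\theta$, where $r\approx\nu^{-1}$. Integrating in $\phi$ first gives $\int_0^{2\pi}\cos^2(\phi-c(\rho))\,d\phi=\pi$ exactly. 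Hence $\|g_b(\mu\rho)\cos\Phi\|_{L^2}^2\sim \mu^{-2}\nu^{-1}\|g_b\|_{L^2(\rho d\rho)}^2$, the factor $r$ varying only by a relative $O(\nu/\mu)$. This yields
\[
\|\bar b(t^*)\|_{\dot H^1}\ge c\,\varepsilon^{2-N}\mu^{1-s},
\]
with $c>0$ depending only on $g_b$.

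\textbf{Step 2 (interpolation).} Conservation of $L^2$ under the flow (or a direct computation) gives $\|\bar b\|_{L^2}\sim\varepsilon^2\mu^{-s}$. From Proposition \ref{prop:approximation} with $k=3$, $p=2$, we have $\|\bar b(t^*)\|_{H^{3}}\lesssim\varepsilon^{2-3N}\mu^{3-s}$. The case split depends on $s$:

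\emph{Case $s\ge1$.} Interpolate $\dot H^1$ between $\dot H^0$ and $\dot H^s$:
\[
\|\bar b\|_{\dot H^1}\le\|\bar b\|_{L^2}^{1-1/s}\|\bar b\|_{\dot H^s}^{1/s}.
\]
Rearranging gives
\[
\|\bar b\|_{\dot H^s}\gtrsim (\varepsilon^{2-N}\mu^{1-s})^s(\varepsilon^2\mu^{-s})^{1-s}=\varepsilon^{2-Ns}.
\]
This exceeds $\varepsilon^{-2}$ because $Ns\ge 10$ and $\varepsilon$ is small.

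\emph{Case $s<1$.} Interpolate $\dot H^1$ between $\dot H^s$ and $\dot H^3$:
\[
\|\bar b\|_{\dot H^1}\le\|\bar b\|_{\dot H^s}^{\frac{2}{3-s}}\|\bar b\|_{\dot H^3}^{\frac{1-s}{3-s}}.
\]
Solving for the $\dot H^s$ norm, the $\mu$-powers cancel: $(1-s)(3-s)/2-(3-s)(1-s)/2=0$. The $\varepsilon$-powers give
\[
\|\bar b\|_{\dot H^s}\gtrsim \varepsilon^{\left[(2-N)(3-s)-(2-3N)(1-s)\right]/2}=\varepsilon^{2-Ns}.
\]
This is again at least $\varepsilon^{-2}$ after shrinking $\varepsilon_0$ to absorb constants.

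\textbf{Main obstacle.} The delicate step is the rigorous lower bound in Step 1. There are two separate things to control:
\begin{itemize}
\item the cross term between $g_b\cos\Phi\,\partial_\rho\Phi$ and $\mu g_b'\sin\Phi$, handled by the triangle inequality since their $L^2$ sizes differ by the factor $\varepsilon^{N}$;
\item the exact evaluation of the angular average, which is needed to prevent cancellation.
\end{itemize}
All the constants here are independent of $\mu$, so the estimate holds uniformly once $\mu$ is large.
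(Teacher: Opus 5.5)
Your proposal is correct and follows the paper's proof essentially step for step: you use the bound $|\nabla\bar b|\ge|\partial_\rho\bar b_\theta|$, then the triangle inequality isolating the term carrying $\partial_\rho\Phi\sim\varepsilon^{-N}\mu$ to get $\|\bar b(t^*)\|_{\dot H^1}\gtrsim\varepsilon^{2-N}\mu^{1-s}$, and finally interpolation split at $s=1$ giving $\|\bar b(t^*)\|_{\dot H^s}\gtrsim\varepsilon^{2-Ns}$. The differences are minor: you take $\dot H^3$ instead of $\dot H^2$ as the upper endpoint when $s<1$ (any endpoint $m>1$ gives the same exponent $2-Ns$), and you justify that $\|g_b(\mu\rho)\cos\Phi\|_{L^2}$ does not degenerate via the exact angular average, a point the paper passes over.
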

	\begin{proof}
		We first establish a lower bound for the $\dot{H}^1$ norm of $\bar{b}$ at $t = t^*$. By definition, the approximate magnetic field is purely toroidal, $\bar{b} = \bar{b}_\theta \mathbf{e}_\theta$. Evaluating the gradient tensor in cylindrical coordinates, the pointwise orthogonality yields $|\nabla \bar{b}|^2 = |\nabla \bar{b}_\theta|^2 + r^{-2}|\bar{b}_\theta|^2 \ge |\partial_\rho \bar{b}_\theta|^2$. Hence, it suffices to estimate the lower bound of $\|\partial_\rho \bar{b}_\theta(t^*)\|_{L^2(\R^3)}$.
		
		To simplify the notation, we denote the amplitude $A = \varepsilon^2 \mu^{1-s} \nu^{\frac{1}{2}}$. Recalling that the explicit formula for $\bar{b}_\theta$ is strictly given by $\bar{b}_\theta = A g_b(\mu\rho) \sin(\phi - t A \rho^{-1})$ in \eqref{explicit_b}. Taking the derivative with respect to the local polar radius $\rho$, we obtain
		\begin{equation}\label{b_rho}
		\partial_\rho \bar{b}_\theta = A \mu g_b'(\mu\rho) \sin(\phi - t A \rho^{-1}) + A g_b(\mu\rho) \cos(\phi - t A \rho^{-1}) \left( t A \rho^{-2} \right).
		\end{equation}
		For the term in parentheses, using $\rho \sim \mu^{-1}$ on $\mathcal{R}_\mu$, there exists $c_1>0$ such that $\rho^{-2} \ge c_1 \mu^2$. At the critical time $t = t^*$, the amplification factor is bounded from below by
		\begin{equation}\label{factor}
		t^* A \rho^{-2} \ge c_1 t^* A \mu^2 = c_1 \left(\varepsilon^{-N-2} \mu^{-2+s} \nu^{-\frac{1}{2}}\right) \left(\varepsilon^2 \mu^{1-s} \nu^{\frac{1}{2}}\right) \mu^2 = c_1 \varepsilon^{-N} \mu.
		\end{equation}
		Combining \eqref{b_rho} and \eqref{factor} with the triangle inequality, we extract the leading term
		\begin{align*}
		\|\partial_\rho \bar{b}_\theta(t^*)\|_{L^2} &\ge \left\| A g_b(\mu\rho) \cos(\phi - t^* A \rho^{-1}) \left( t^* A \rho^{-2} \right) \right\|_{L^2} - \left\| A \mu g_b'(\mu\rho) \sin(\phi - t^* A \rho^{-1}) \right\|_{L^2} \\[8pt]
		&\ge c_1 \left(\varepsilon^{-N} \mu\right) A \left\| g_b\cos(\phi - t^* A \rho^{-1}) \right\|_{L^2} - \mu A \left\| g_b' \sin(\phi - t^* A \rho^{-1}) \right\|_{L^2}.
		\end{align*}
		Integrating over the support $\mathcal{R}_\mu$, the $L^2$ measure factor scales as $\mu^{-1}\nu^{-\frac{1}{2}}$. Hence, we have
		\begin{equation*}
		\|\partial_\rho \bar{b}_\theta(t^*)\|_{L^2} \ge C_1 \varepsilon^{2-N} \mu^{1-s} - C_2 \varepsilon^2 \mu^{1-s}.
		\end{equation*}
		For any sufficiently small $\varepsilon \in (0, \varepsilon_0)$ and $N \ge 1$, the first term strictly dominates. Thus, we obtain the $\dot{H}^1$ lower bound
		\begin{equation}\label{eq:H1_lower_bound}
		\|\bar{b}(t^*)\|_{\dot{H}^1} \ge C \varepsilon^{2-N} \mu^{1-s}.
		\end{equation}
		Next, we deduce the $\dot{H}^s$ estimate by using Sobolev interpolation. We split the argument into two cases depending on $s$.
		
		\noindent \textbf{Case 1: $s \ge 1$.} By the standard interpolation inequality,
		\begin{equation*}
		\|\bar{b}\|_{\dot{H}^1} \le C \|\bar{b}\|_{L^2}^{1-\frac{1}{s}} \|\bar{b}\|_{\dot{H}^s}^{\frac{1}{s}}.
		\end{equation*}
		Rearranging the above inequality and utilizing \eqref{eq:approx_Wkp} in Proposition \ref{prop:approximation}, we obtain
		\begin{equation*}
		\|\bar{b}(t^*)\|_{\dot{H}^s} \ge C \frac{\|\bar{b}\|_{\dot{H}^1}^s}{\|\bar{b}\|_{L^2}^{s-1}} \ge C \frac{\left(\varepsilon^{2-N} \mu^{1-s}\right)^s}{\left(\varepsilon^2 \mu^{-s}\right)^{s-1}} = C \varepsilon^{2-Ns}.
		\end{equation*}
		\noindent \textbf{Case 2: $0 < s < 1$.} We interpolate $\dot{H}^1$ between $\dot{H}^s$ and $\dot{H}^2$:
		\begin{equation*}
		\|\bar{b}\|_{\dot{H}^1} \le C \|\bar{b}\|_{\dot{H}^s}^{\frac{1}{2-s}} \|\bar{b}\|_{\dot{H}^2}^{\frac{1-s}{2-s}}.
		\end{equation*}
		Rearranging the inequality and utilizing \eqref{eq:approx_Wkp} again, we deduce that
		\begin{equation*}
		\|\bar{b}(t^*)\|_{\dot{H}^s} \ge C \frac{\|\bar{b}\|_{\dot{H}^1}^{2-s}}{\|\bar{b}\|_{\dot{H}^2}^{1-s}} \ge C \frac{\left(\varepsilon^{2-N} \mu^{1-s}\right)^{2-s}}{\left(\varepsilon^{2-2N} \mu^{2-s}\right)^{1-s}}=C \varepsilon^{2-Ns}.
		\end{equation*}
		In either case, since $N$ is chosen such that $N \ge \frac{10}{s}$, we have $2-Ns\le -8$. Consequently, for any sufficiently small $0 < \varepsilon \le \varepsilon_0$, the homogeneous lower bound $\|\bar{b}(t^*)\|_{\dot{H}^s(\R^3)} \ge C\varepsilon^{-8}$ holds. 
		
		Recalling the standard relation between homogeneous and inhomogeneous Sobolev norms, we have $\|\bar{b}(t^*)\|_{H^s(\R^3)} \ge \|\bar{b}(t^*)\|_{\dot{H}^s(\R^3)}$. Therefore, taking $\varepsilon$ sufficiently small yields the desired norm inflation in the inhomogeneous space
		\begin{equation*}
		\|\bar{b}(t^*)\|_{H^s(\R^3)} \ge \varepsilon^{-2},
		\end{equation*}
		which completes the proof of Lemma \ref{lem:norm_inflation}.
	\end{proof}
	
	\section{Perturbation analysis}\label{sec-pertur}
	In this section, we prove that the approximate solution $(\bar{u}, \bar{b})$ constructed in Section \ref{sec-approx} remains close to the exact solution $(u, b)$ of the ideal MHD equations up to the critical time $t^*$, at which $\dot{H}^s$ norm inflation for the magnetic field occurs.
	
	Let $u$ and $b$ be the exact solutions to the ideal MHD equations with the initial data $(u_0, b_0)$. We define the perturbation variables as the differences between the exact solutions and the approximate solutions
	\begin{equation*}
	w = u- \bar{u}, \quad \beta= b - \bar{b}, \quad q = p-\bar{p}.
	\end{equation*}
	Substituting $u = \bar{u} + w$ and $b = \bar{b} + \beta$ into the ideal MHD equations and subtracting the equations for the approximate solutions yields the evolution equations for $(w,\beta)$
	\begin{equation}\label{eq:perturbation_system}
	\begin{cases}
	\partial_t w + u \cdot \nabla w +w \cdot \nabla \bar{u} + \nabla q = b \cdot \nabla \beta+ \beta \cdot \nabla \bar{b} - \overline{E}_u, \\
	\partial_t \beta + u \cdot \nabla \beta + w \cdot \nabla \bar{b}= b \cdot \nabla w + \beta \cdot \nabla \bar{u} - \overline{E}_b, \\
	\operatorname{div}w=0, \quad \operatorname{div}\beta=0, \\
	(w, \beta)|_{t=0} = (0, 0).
	\end{cases}
	\end{equation}
	The primary goal of this section is to prove the following proposition, which guarantees the validity of the approximation and excludes finite-time blow-up before $t^*$.
	\begin{prop}\label{prop:main_perturbation}
		Let $T = T(u_0, b_0) > 0$ be the maximal time of existence for the local-in-time smooth solution $(u, b)$ of the ideal MHD equations with the initial data defined by \eqref{ini-data}. For any $\varepsilon > 0$, there exists $\mu_0 > 0$ sufficiently large such that if $\mu \ge \mu_0$, then $0 < t^* < T$, namely, $(u, b) \in C([0,t^*];H^\infty(\R^3)\times H^\infty(\R^3))$.
		
		More quantitatively, for any $\varepsilon > 0$, if $\mu \ge \mu_0$, then
		\begin{equation}\label{w+beta-ineq}
		\|w\|_{L^\infty([0,t^*]; H^s)}+\| \beta\|_{L^\infty([0,t^*]; H^s)}\le C_{s,\varepsilon}\mu^{-\frac{\gamma}{2}},
		\end{equation}
		where $C_{s,\varepsilon}$ is independent of $\mu$.
	\end{prop}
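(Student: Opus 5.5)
We argue by a bootstrap (continuity) argument on the perturbation $(w,\beta)$ solving \eqref{eq:perturbation_system}. Since $(u_0,b_0)\in C_c^\infty$, classical local theory gives a smooth solution on $[0,T)$, and by Proposition \ref{prop:a_priori} the solution persists as long as $\|\nabla u\|_{L^\infty}+\|\nabla b\|_{L^\infty}$ stays bounded. So it suffices to control $\nabla w,\nabla\beta$ in $L^\infty$ on $[0,\min(t^*,T))$.

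\textbf{Bootstrap hypothesis.} On a maximal interval $[0,t_1]\subset[0,\min(t^*,T))$ we assume
\begin{equation*}
\|w(t)\|_{H^{\sigma}}+\|\beta(t)\|_{H^{\sigma}}\le \mu^{\sigma-s-\frac{\gamma}{2}}\varepsilon^{-K\sigma}\quad\text{for } \sigma\in\{0,\, \sigma_0\},
\end{equation*}
with $\sigma_0>\frac52$ fixed (say $\sigma_0=3$) and $K$ a large multiple of $N$. By Sobolev embedding, $\|\nabla(w,\beta)\|_{L^\infty}\lesssim \|(w,\beta)\|_{H^{\sigma_0}}$, which is then bounded by $\mu^{3-s-\gamma/2}$ up to $\varepsilon$-factors. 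This should be compared with $\|\nabla\bar b\|_{L^\infty}\lesssim \varepsilon^{2-N}\mu^{2-s}\nu^{1/2}$ from \eqref{eq:approx_Wkp}.

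\textbf{$L^2$ estimate.} We take the $L^2$ inner product of \eqref{eq:perturbation_system} with $(w,\beta)$. The transport terms $u\cdot\nabla$ vanish by incompressibility, the pressure term drops out, and the cross terms $\langle b\cdot\nabla\beta,w\rangle+\langle b\cdot\nabla w,\beta\rangle$ cancel. This leaves
\begin{equation*}
\frac{\d}{\d t}\|(w,\beta)\|_{L^2}\lesssim (\|\nabla\bar u\|_{L^\infty}+\|\nabla\bar b\|_{L^\infty})\|(w,\beta)\|_{L^2}+\|(\overline E_u,\overline E_b)\|_{L^2}.
\end{equation*}
The key arithmetic is that $t^*\|\nabla\bar b\|_{L^\infty}\lesssim \varepsilon^{-2N}$, which is independent of $\mu$, and likewise $t^*\|\nabla \bar u\|_{L^\infty}\lesssim\varepsilon^{-N}$. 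Lemma \ref{lem:gronwall} therefore gives $\|(w,\beta)\|_{L^2}\le e^{C\varepsilon^{-2N}}\,t^*\|\overline E\|_{L^2}$. Using \eqref{error_L2},
\begin{equation*}
t^*\,\mu^{1-2s}\nu^{3/2}=\varepsilon^{-N-2}\mu^{-1-s}\nu=\varepsilon^{-N-2}\mu^{-s-\gamma},
\end{equation*}
which improves the hypothesis by a factor $\mu^{-\gamma/2}$ for $\mu\ge\mu_0(\varepsilon)$.

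\textbf{Higher-order estimate.} We apply $\Lambda^{\sigma}$ and use Lemmas \ref{lem:commutator} and \ref{lem:fractional_leibniz}. The top-order cross terms $\langle\Lambda^\sigma(b\cdot\nabla\beta),\Lambda^\sigma w\rangle+\langle\Lambda^\sigma(b\cdot\nabla w),\Lambda^\sigma\beta\rangle$ cancel up to commutators. The commutators $[\Lambda^\sigma,u\cdot]\nabla$ and $[\Lambda^\sigma,b\cdot]\nabla$ are split as $\bar u+w$ and $\bar b+\beta$, with the asymmetric pairing: the highest derivatives are placed on the approximate solution in $L^\infty$ (or $L^2$) and the lowest on $(w,\beta)$ in $L^2$. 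This gives
\begin{equation*}
\begin{aligned}
\frac{\d}{\d t}\|(w,\beta)\|_{H^\sigma}\lesssim{}& \bigl(\|\nabla(\bar u,\bar b)\|_{L^\infty}+\|\nabla(w,\beta)\|_{L^\infty}\bigr)\|(w,\beta)\|_{H^\sigma}\\
&+\|\Lambda^{\sigma+1}(\bar u,\bar b)\|_{L^\infty}\|(w,\beta)\|_{L^2}+\|\overline E\|_{H^\sigma}.
\end{aligned}
\end{equation*}
Each $\sigma$-derivative of $\bar b$ costs $\varepsilon^{-N}\mu$. The forcing terms therefore produce $\mu^{\sigma}\varepsilon^{-N\sigma}$ times the $L^2$ bound, consistent with the hypothesis. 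The nonlinear term $t^*\|\nabla(w,\beta)\|_{L^\infty}$ is treated as follows. Either interpolate $\|\nabla(w,\beta)\|_{L^\infty}$ between $L^2$ and $H^{\sigma_0}$, or note from the hypothesis that $t^*\mu^{5/2-s-\gamma/2}\cdot\varepsilon^{-C}=\varepsilon^{-C}\mu^{1/2-\gamma/2}\nu^{-1/2}=\varepsilon^{-C}\mu^{\gamma/2-\gamma/2}$. This quantity is only $O_\varepsilon(1)$, so Gronwall still closes with an $\varepsilon$-dependent constant. This strict improvement closes the bootstrap, yields $t^*<T$, and gives \eqref{w+beta-ineq} by interpolation between $\sigma=0$ and $\sigma_0$ at $\sigma=s$.

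\textbf{Main obstacle.} The hard step is the Gronwall exponent in the $H^{\sigma_0}$ estimate. The crucial check is that $t^*\|\nabla\bar b\|_{L^\infty}$ and $t^*\|\nabla(w,\beta)\|_{L^\infty}$ are bounded by powers of $\varepsilon^{-1}$ uniformly in $\mu$, so that the exponential factor costs only $e^{C\varepsilon^{-C}}$. The gain $\mu^{-\gamma}$ coming from the corrector ratio $\mu^{-1}\nu$ in \eqref{error_L2} must then absorb all such $\varepsilon$-losses once $\mu_0(\varepsilon)$ is large. If the bound on $\|\nabla(w,\beta)\|_{L^\infty}$ from the hypothesis is too weak, we would first interpolate it as $\|(w,\beta)\|_{L^2}^{1-5/(2\sigma_0)-}\|(w,\beta)\|_{H^{\sigma_0}}^{5/(2\sigma_0)+}$ to retain part of the $\mu^{-\gamma/2}$ gain.
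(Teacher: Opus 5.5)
The step that fails is the one you flag as the main obstacle. Under your bootstrap hypothesis, the Gronwall exponent $\int_0^{t^*}\|\nabla(w,\beta)\|_{L^\infty}\,\d t$ in the $H^{\sigma_0}$ estimate is not $O_\varepsilon(1)$. We have $\mu^{2-s}\nu^{1/2}=\mu^{\frac52-s-\frac{\gamma}{2}}$ and $t^*\mu^{2-s}\nu^{1/2}=\varepsilon^{-N-2}$. So a hypothesis with gain exactly $\mu^{-\gamma/2}$ at every level puts $(w,\beta)$ precisely at the critical scale in $H^{5/2}$, and $H^{5/2}\not\hookrightarrow W^{1,\infty}$ leaves no room.

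Concretely, the hypothesis gives $\|\nabla(w,\beta)\|_{L^\infty}\lesssim_\varepsilon\mu^{\sigma_0-s-\gamma/2}$, hence $t^*\|\nabla(w,\beta)\|_{L^\infty}\lesssim_\varepsilon\mu^{\sigma_0-5/2}$; this is $\mu^{1/2}$ for $\sigma_0=3$. Interpolating between your two hypothesis bounds gives at best $\|(w,\beta)\|_{H^{5/2+\delta}}\lesssim_\varepsilon\mu^{\frac52+\delta-s-\frac{\gamma}{2}}$, i.e.\ an exponent of size $\mu^{\delta}\to\infty$. Your line $t^*\mu^{5/2-s-\gamma/2}\varepsilon^{-C}=\varepsilon^{-C}$ presupposes the endpoint bound $\|\nabla(w,\beta)\|_{L^\infty}\lesssim\mu^{5/2-s-\gamma/2}$, which does not follow.

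The repair uses the fact that your $L^2$ estimate is unconditional (only $\nabla\bar u,\nabla\bar b$ enter) and yields the full gain $\mu^{-s-\gamma}$. Interpolate this improved bound, not the $L^2$ hypothesis, against the $H^{\sigma_0}$ hypothesis with $\theta=(\tfrac52+\delta)/\sigma_0$. This gives $t^*\|\nabla(w,\beta)\|_{L^\infty}\lesssim_\varepsilon\mu^{\delta-(1-\theta)\gamma/2}$, which is bounded once $\delta<(1-\theta)\gamma/2$. Your fallback interpolation works only in this form. Alternatively, bootstrap directly on $\|\nabla(w,\beta)\|_{L^\infty}\le 2M_\varepsilon\mu^{2-s}\nu^{1/2}$, as the paper does.

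Second, your scheme, unlike the paper's, depends on the $H^\sigma$ inequality having no forcing term $\|\nabla(w,\beta)\|_{L^\infty}\|(\bar u,\bar b)\|_{H^\sigma}$. Such a term comes from the commutators $[\Lambda^\sigma,\bar u\cdot\nabla]$ and $[\Lambda^\sigma,\bar b\cdot\nabla]$. The paper keeps it, which is why it only gets $Y_k\lesssim\mu^{k-s}$ with no gain and must take $k$ large and interpolate against the $\mu^{-\gamma}$ gain of $Y_0$. With that term present, your $H^{\sigma_0}$ hypothesis would not be improved: its time integral is of size $\mu^{\sigma_0-s}\,t^*\|\nabla(w,\beta)\|_{L^\infty}$, and $t^*\|\nabla(w,\beta)\|_{L^\infty}$ is not $\ll\mu^{-\gamma/2}$.

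Your displayed inequality is attainable, but only through the following steps:
\begin{itemize}
\item pair the commutator term as $\|\Lambda^\sigma\bar u\|_{L^\infty}\|\nabla w\|_{L^2}$, and likewise for $\bar b$;
\item use $\|\nabla w\|_{L^2}\le\|w\|_{L^2}^{1-1/\sigma}\|w\|_{H^\sigma}^{1/\sigma}$;
\item use the scaling $\|\Lambda^\sigma(\bar u,\bar b)\|_{L^\infty}\lesssim_\varepsilon\mu^{\sigma-1}\mu^{2-s}\nu^{1/2}$;
\item finish with Young's inequality.
\end{itemize}
You should state this explicitly, since the argument rests on it. With both repairs your route closes at a fixed $\sigma_0$ and gives the full gain $\mu^{\sigma-s-\gamma}$ at every level. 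That is a somewhat cleaner alternative to the paper's large-$k$ interpolation.
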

	
	\begin{proof}
	We first define the energy functional for all $m \ge 0$ as	
    \begin{equation*}
    Y_m(t)=\left(\|w(t)\|_{H^m(\R^3)}^2+\|\beta(t)\|_{H^m(\R^3)}^2\right)^{\frac12}.
    \end{equation*}
    
    Before establishing the bootstrap argument, let $\varepsilon>0$ be fixed. By \eqref{eq:approx_Wkp} in Proposition \ref{prop:approximation}, there exists a constant $M_\varepsilon\ge1$, such that the approximate solutions satisfy
    \begin{equation}\label{eq:approx_grad_bound}
    \|\nabla \bar{u}(t)\|_{L^\infty(\R^3)} + \|\nabla \bar{b}(t)\|_{L^\infty(\R^3)} \le M_\varepsilon \mu^{2-s}\nu^{\frac{1}{2}}, \quad \text{for all } t \in [0, t^*].
    \end{equation}
    
    \noindent\textbf{Bootstrap assumption:} Let $t_0\in(0,t^*]$ be the maximal time such that
    \begin{equation}\label{bootstrap_assumption}
    \|\nabla w(t)\|_{L^\infty(\R^3)} + \|\nabla \beta(t)\|_{L^\infty(\R^3)} \le 2M_\varepsilon \mu^{2-s}\nu^{\frac{1}{2}}, \quad \text{for all } t \in [0, t_0].
    \end{equation}
  
    \noindent \textbf{Step 1: Energy estimates under the bootstrap assumption}\\
    To proceed with the proof of Proposition \ref{prop:main_perturbation}, we first formulate the energy estimates as a lemma.
    \begin{lemma}\label{lem:energy_est}
    	For any real number $k > \frac{5}{2}$, under the bootstrap assumption \eqref{bootstrap_assumption}, there exist constants $C_\varepsilon, C_{k,\varepsilon} > 0$ independent of $\mu$ and $\nu$ such that for all $t \in [0, t_0]$, we have
    	\begin{align}
    	Y_0(t) &\le C_\varepsilon \mu^{-s}(\mu^{-1}\nu), \label{eq:Y0_est}\\
    	Y_k(t) &\le C_{k,\varepsilon} \mu^{k-s}. \label{eq:Yk_est}
    	\end{align}
    \end{lemma}
    \begin{proof}
    	We first establish the $L^2$ estimate \eqref{eq:Y0_est}. Taking the $L^2$ inner product of the first and second equations in \eqref{eq:perturbation_system} with $w$ and $\beta$ respectively, and summing them, we obtain the energy identity. Notice that the convection terms vanish due to the divergence-free condition, namely $\langle u \cdot \nabla w, w \rangle = 0$, $\langle u \cdot \nabla \beta, \beta \rangle = 0$, and $\langle \nabla q, w \rangle = 0$. Moreover, we have $\langle b \cdot \nabla \beta, w \rangle =-\langle b \cdot \nabla w, \beta \rangle$.
    	
    	Thus, the $L^2$ energy inequality becomes
    	\begin{align*}
    	\frac{1}{2} \frac{\d}{\d t} Y_0^2(t) &= -\langle w \cdot \nabla \bar{u}, w \rangle + \langle \beta \cdot \nabla \bar{b}, w \rangle - \langle w \cdot \nabla \bar{b}, \beta \rangle + \langle \beta \cdot \nabla \bar{u}, \beta \rangle - \langle \overline{E}_u, w \rangle - \langle \overline{E}_b, \beta \rangle \\
   		&\le \left(\|\nabla \bar{u}\|_{L^\infty} + \|\nabla \bar{b}\|_{L^\infty} \right) Y_0^2(t)+\left(\|\overline{E}_u\|_{L^2} + \|\overline{E}_b\|_{L^2} \right) Y_0(t).
    	\end{align*}
    	Applying Gronwall's inequality over $[0, t_0]$ and $(w, \beta)|_{t=0} = (0, 0)$, we obtain
    	\begin{align*}
    	Y_0(t) &\lesssim \exp\left( \int_0^t \left(\|\nabla \bar{u}\|_{L^\infty} + \|\nabla \bar{b}\|_{L^\infty} \right) \mathrm{d}\tau \right)  \int_0^t \left(\|\overline{E}_u\|_{L^2} + \|\overline{E}_b\|_{L^2} \right) \mathrm{d}\tau\\
    	&\lesssim \exp\left(t^*  M_\varepsilon \mu^{2-s}\nu^{\frac{1}{2}} \right) \cdot t^* \left(\mu^{1-2s}\nu^{\frac{3}{2}}\right).
    	\end{align*}
    	Substituting $t^* = \varepsilon^{-N-2}\mu^{-2+s}\nu^{-\frac{1}{2}}$ in \eqref{t*}, we obtain $Y_0(t) \le C_\varepsilon \mu^{-s}(\mu^{-1}\nu)$ for any $t \in [0, t_0]$.
    	
    	Next, we evaluate the $H^k$ estimate \eqref{eq:Yk_est}. Acting $\Lambda^k$ to \eqref{eq:perturbation_system} and taking the $L^2$ inner products with $\Lambda^k w$ and $\Lambda^k \beta$ respectively, the pressure term vanishes. We isolate the highest-order derivatives in the nonlinear terms using commutators
    	\begin{align*}
    	\langle \Lambda^k(u \cdot \nabla w), \Lambda^k w \rangle &= \langle [\Lambda^k, u \cdot \nabla]w, \Lambda^k w \rangle + \langle u \cdot \nabla \Lambda^k w, \Lambda^k w \rangle = \langle [\Lambda^k, u \cdot \nabla]w, \Lambda^k w \rangle,
    	\end{align*}
    	where the term $\langle u \cdot \nabla \Lambda^k w, \Lambda^k w \rangle=0$ by $\operatorname{div} u=0$. Similarly, we have $\langle \Lambda^k(u \cdot \nabla \beta), \Lambda^k \beta \rangle= \langle [\Lambda^k, u \cdot \nabla]\beta, \Lambda^k \beta \rangle$. Moreover, we know
    	\begin{align*}
    	&\langle \Lambda^k(b \cdot \nabla \beta), \Lambda^k w \rangle + \langle \Lambda^k(b \cdot \nabla w), \Lambda^k \beta \rangle \\
    	&\quad= \langle [\Lambda^k, b \cdot \nabla]\beta, \Lambda^k w \rangle + \langle [\Lambda^k, b \cdot \nabla]w, \Lambda^k \beta \rangle + \langle b \cdot \nabla(\Lambda^k \beta \cdot \Lambda^k w), 1 \rangle,
    	\end{align*}
    	where the term $\langle b \cdot \nabla(\Lambda^k \beta \cdot \Lambda^k w), 1 \rangle$ vanishes by $\operatorname{div} b=0$. 
    	
    	Therefore, adding up the above identities, we deduce that
    	\begin{align*}
    	\frac{1}{2}\frac{\mathrm{d}}{\mathrm{d}t} Y_k^2(t) \le \; & \left( \|[\Lambda^k, u \cdot \nabla]w\|_{L^2} + \|[\Lambda^k, b \cdot \nabla]\beta\|_{L^2} + \|\Lambda^k(w \cdot \nabla \bar{u})\|_{L^2} + \|\Lambda^k(\beta \cdot \nabla \bar{b})\|_{L^2} \right) \|w\|_{H^k} \\
    	+ \; & \left(\|[\Lambda^k, u \cdot \nabla]\beta\|_{L^2} + \|[\Lambda^k, b \cdot \nabla]w\|_{L^2} + \|\Lambda^k(w \cdot \nabla \bar{b})\|_{L^2} + \|\Lambda^k(\beta \cdot \nabla \bar{u})\|_{L^2} \right) \|\beta\|_{H^k} \\
    	+ \; & \left( \|\overline{E}_u\|_{H^k} + \|\overline{E}_b\|_{H^k}\right) Y_k(t).
    	\end{align*}
    	We first use Lemma \ref{lem:commutator} to estimate the commutator terms. Substituting $u = \bar{u} + w$ and $b = \bar{b} + \beta$, then we obtain
    	\begin{align*}
    	\|[\Lambda^k, u \cdot \nabla]w\|_{L^2} &\lesssim \|\nabla u\|_{L^\infty} \|w\|_{H^k} + \|\nabla w\|_{L^\infty} \|\Lambda^k u\|_{L^2} \\
    	&\lesssim \left(\|\nabla \bar{u}\|_{L^\infty} + \|\nabla w\|_{L^\infty} \right) \|w\|_{H^k} + \|\nabla w\|_{L^\infty} \|\bar{u}\|_{H^k}, \\
    	\|[\Lambda^k, u \cdot \nabla]\beta\|_{L^2} 
    	&\lesssim \|\nabla u\|_{L^\infty} \|\beta\|_{H^k} + \|\nabla \beta\|_{L^\infty} \|\Lambda^k u\|_{L^2} \\
    	&\lesssim \left( \|\nabla \bar{u}\|_{L^\infty} + \|\nabla w\|_{L^\infty} \right) \|\beta\|_{H^k} + \|\nabla \beta\|_{L^\infty} \left(\|\bar{u}\|_{H^k}+\|w\|_{H^k}\right), \\
    	\|[\Lambda^k, b \cdot \nabla]\beta\|_{L^2} &\lesssim \|\nabla b\|_{L^\infty} \|\beta\|_{H^k} + \|\nabla \beta\|_{L^\infty} \|\Lambda^k b\|_{L^2} \\
    	&\lesssim \left( \|\nabla \bar{b}\|_{L^\infty} + \|\nabla \beta\|_{L^\infty}\right) \|\beta\|_{H^k} + \|\nabla \beta\|_{L^\infty} \|\bar{b}\|_{H^k}, \\
    	\|[\Lambda^k, b \cdot \nabla]w\|_{L^2} &\lesssim \|\nabla b\|_{L^\infty} \|w\|_{H^k} + \|\nabla w\|_{L^\infty} \|\Lambda^k b\|_{L^2} \\
    	&\lesssim \left(\|\nabla \bar{b}\|_{L^\infty} + \|\nabla \beta\|_{L^\infty}\right) \|w\|_{H^k} + \|\nabla w\|_{L^\infty} \left(\|\bar{b}\|_{H^k}+\|\beta\|_{H^k}\right).
    	\end{align*}
    	For the remaining terms, we employ Lemma \ref{lem:fractional_leibniz} to exploit the smallness of the $L^2$ norm of $(w,\beta)$. Consequently, we obtain
    	\begin{align*}
    	\|\Lambda^k(w \cdot \nabla \bar{u})\|_{L^2} &\lesssim \|w\|_{L^2} \|\nabla^{k+1} \bar{u}\|_{L^\infty} + \|\nabla \bar{u}\|_{L^\infty} \|w\|_{H^k}, \\
    	\|\Lambda^k(\beta \cdot \nabla \bar{u})\|_{L^2} &\lesssim \|\beta\|_{L^2} \|\nabla^{k+1} \bar{u}\|_{L^\infty} + \|\nabla \bar{u}\|_{L^\infty} \|\beta\|_{H^k}, \\
    	\|\Lambda^k(w \cdot \nabla \bar{b})\|_{L^2} &\lesssim \|w\|_{L^2} \|\nabla^{k+1} \bar{b}\|_{L^\infty} + \|\nabla \bar{b}\|_{L^\infty} \|w\|_{H^k}, \\
    	\|\Lambda^k(\beta \cdot \nabla \bar{b})\|_{L^2} &\lesssim \|\beta\|_{L^2} \|\nabla^{k+1} \bar{b}\|_{L^\infty} + \|\nabla \bar{b}\|_{L^\infty} \|\beta\|_{H^k}.
    	\end{align*}
    	Collecting the above bounds, we arrive at the energy inequality
    	\begin{align}
    	\frac{\mathrm{d}}{\mathrm{d}t} Y_k(t) & \lesssim
    	\left(\|\nabla \bar{u}\|_{L^\infty} + \|\nabla \bar{b}\|_{L^\infty} + \|\nabla w\|_{L^\infty} + \|\nabla \beta\|_{L^\infty} \right) Y_k(t) \notag \\
    	&\quad+ Y_0(t) \left( \|\nabla^{k+1} \bar{u}\|_{L^\infty} + \|\nabla^{k+1} \bar{b}\|_{L^\infty} \right)\notag  \\
    	&\quad+ \left( \|\nabla w\|_{L^\infty} + \|\nabla \beta\|_{L^\infty} \right) \left( \|\bar{u}\|_{H^k} + \|\bar{b}\|_{H^k} \right) + \left( \|\overline{E}_u\|_{H^k} + \|\overline{E}_b\|_{H^k} \right) \notag \\
    	&:= \left(\|\nabla \bar{u}\|_{L^\infty} + \|\nabla \bar{b}\|_{L^\infty} + \|\nabla w\|_{L^\infty} + \|\nabla \beta\|_{L^\infty} \right) Y_k(t)+S_k(t). \label{eq:Yk_ODE}
    	\end{align}
    	
    	Using \eqref{eq:approx_grad_bound} and the bootstrap assumption \eqref{bootstrap_assumption}, the coefficient of $Y_k(t)$ in \eqref{eq:Yk_ODE} is bounded by $3M_\varepsilon \mu^{2-s}\nu^{\frac{1}{2}}$. Moreover, by the $L^2$ estimate \eqref{eq:Y0_est}, the bootstrap assumption \eqref{bootstrap_assumption}, and Proposition \ref{prop:approximation}, we estimate the three terms in $S_k(t)$. The first term is bounded by $\left(\mu^{-s}(\mu^{-1}\nu)\right)\left(\mu^{k+2-s}\nu^{\frac{1}{2}}\right) = \mu^{k+1-2s}\nu^{\frac{3}{2}}$. The second term is bounded by $\left(\mu^{2-s}\nu^{\frac{1}{2}}\right)\left(\mu^{k-s}\right) = \mu^{k+2-2s}\nu^{\frac{1}{2}}$, which is the dominant scale. The third term is bounded by $\mu^{k-s}(\mu^{-1}\nu)\mu^{2-s}\nu^{\frac{1}{2}} = \mu^{k+1-2s}\nu^{\frac{3}{2}}$. Therefore, $S_k(t) \lesssim \mu^{k+2-2s}\nu^{\frac{1}{2}}$. Finally, applying Gronwall's inequality on $[0, t_0]$ gives
    	\begin{equation*}
    	Y_k(t) \lesssim \exp\left( 3M_\varepsilon \mu^{2-s}\nu^{\frac{1}{2}} t^* \right) \int^{t} S_k(\tau) \,\d\tau \lesssim \left(\varepsilon^{-N-2}\mu^{-2+s}\nu^{-\frac{1}{2}}\right)\left( \mu^{k+2-2s}\nu^{\frac{1}{2}} \right) = C_{k,\varepsilon} \mu^{k-s},
    	\end{equation*}
    	which implies \eqref{eq:Yk_est}.
    \end{proof}
    
    \noindent \textbf{Step 2: Bootstrap closure} \\
    	We are now in a position to close the bootstrap argument. Let $\delta \in (0, \frac{\gamma}{4})$ be fixed. By the Sobolev embedding $H^{\frac{5}{2}+\delta}(\R^3) \hookrightarrow W^{1,\infty}(\R^3)$ and an obvious interpolation, we deduce
    	\begin{equation*}
    	\|\nabla w(t)\|_{L^\infty} + \|\nabla \beta(t)\|_{L^\infty} \lesssim Y_{\frac{5}{2}+\delta}(t) \le Y_0(t)^{1-\alpha} Y_k(t)^\alpha,
    	\end{equation*}
    	where $\alpha = \frac{\frac{5}{2}+\delta}{k}$. Substituting the estimates \eqref{eq:Y0_est} and \eqref{eq:Yk_est}, and noting that $k\alpha = \frac{5}{2} + \delta$, the combined exponent of $\mu$ is given by
    	\begin{equation*}
    	(-s-\gamma)(1-\alpha) + (k-s)\alpha = \frac{5}{2} - s - \gamma + \delta + \gamma\alpha.
    	\end{equation*}
    	We compare this exponent to that of the bootstrap threshold $2M_\varepsilon \mu^{2-s}\nu^{\frac{1}{2}} = 2M_\varepsilon \mu^{\frac{5}{2}-s-\frac{\gamma}{2}}$. By choosing $k \gg \frac{5}{2}$ sufficiently large such that $\gamma\alpha < \frac{\gamma}{4}$, we  bound the exponent difference
    	\begin{equation*}
    	\left( \frac{5}{2} - s - \gamma + \delta + \gamma\alpha \right)- \left( \frac{5}{2} - s - \frac{\gamma}{2} \right) = -\frac{\gamma}{2} + \delta + \gamma\alpha < 0.
    	\end{equation*}
    	Consequently, for $\mu$ sufficiently large, we obtain
    	\begin{equation}\label{w-beta-t*}
    	\|\nabla w(t)\|_{L^\infty} + \|\nabla \beta(t)\|_{L^\infty} \ll M_\varepsilon \mu^{2-s}\nu^{\frac{1}{2}}.
    	\end{equation}
    	By a continuity argument, the bootstrap assumption \eqref{bootstrap_assumption} holds on the entire interval $[0,t^*]$.
    	
    	\noindent \textbf{Step 3: Final estimates}\\ 
    	To exclude finite-time blow-up for the exact solution $(u, b)$ before $t^*$, we combine \eqref{eq:approx_grad_bound} and \eqref{w-beta-t*} to obtain, for all $t\in[0,t^*]$,
    	\begin{align*}
    	\|\nabla u(t)\|_{L^\infty} + \|\nabla b(t)\|_{L^\infty} &\le \left( \|\nabla \bar{u}(t)\|_{L^\infty} + \|\nabla \bar{b}(t)\|_{L^\infty} \right) + \left( \|\nabla w(t)\|_{L^\infty} + \|\nabla \beta(t)\|_{L^\infty} \right) \\
    	&\le M_\varepsilon \mu^{2-s}\nu^{\frac{1}{2}} + 2M_\varepsilon \mu^{2-s}\nu^{\frac{1}{2}} = 3M_\varepsilon \mu^{2-s}\nu^{\frac{1}{2}}.
    	\end{align*}
    	Integrating over $[0, t^*]$ yields
    	\begin{equation*}
    	\int_0^{t^*} \left( \|\nabla u(\tau)\|_{L^\infty} + \|\nabla b(\tau)\|_{L^\infty} \right) \,\mathrm{d}\tau \le \int_0^{t^*} 3M_\varepsilon \mu^{2-s}\nu^{\frac{1}{2}} \,\mathrm{d}\tau \le 3M_\varepsilon \varepsilon^{-N-2}.
    	\end{equation*}
    	Since the quantity $3M_\varepsilon \varepsilon^{-N-2} < \infty$ is independent of $\mu$, the Beale-Kato-Majda type blow-up criterion ensures that the exact smooth solution $(u, b)$ does not blow up at $t^*$.
    	
    	Finally, interpolating between $Y_0$ and $Y_k$ yields the desired bound, for $t\in[0,t^*]$,
    	\begin{equation}\label{eq:Ys_final}
    	Y_s(t) \le Y_0(t)^{1-\frac{s}{k}} Y_k(t)^{\frac{s}{k}} \lesssim (\mu^{-s-\gamma})^{1-\frac{s}{k}} (\mu^{k-s})^{\frac{s}{k}} = \mu^{-\gamma + \frac{s\gamma}{k}}.
    	\end{equation}
    	By taking $k$ large enough such that $\frac{s\gamma}{k} \le \frac{\gamma}{2}$, we conclude that $\sup_{t\in[0,t^*]}Y_s(t) \lesssim \mu^{-\frac{\gamma}{2}}$.
    \end{proof}
    
        \section{Proof of Theorem \ref{th-ill-ideal}}\label{sec-proof}
        \begin{proof}[\normalfont\bfseries Proof of Theorem \ref{th-ill-ideal}]
        With Proposition \ref{prop:main_perturbation} established, we deduce the norm inflation of the exact magnetic field at $t = t^*$. According to Lemma \ref{lem:norm_inflation}, the approximate magnetic field satisfies
        \begin{equation}\label{bar-b-epsilon}
        \|\bar{b}(t^*)\|_{H^s(\R^3)} \ge \varepsilon^{-2}.
        \end{equation}
        Moreover, by Proposition \ref{prop:main_perturbation}, the difference in the magnetic field in $H^s$ satisfies
        \begin{equation}\label{beta-epsilon}
        \|\beta(t^*)\|_{H^{s}(\R^3)} \le C_{s,\varepsilon}\mu^{-\frac{\gamma}{2}},
        \end{equation}
        which implies that $\|\beta(t^*)\|_{H^s(\R^3)}\leq 1$ for sufficiently large $\mu$.
        
        By \eqref{bar-b-epsilon} and \eqref{beta-epsilon}, we obtain
        \begin{equation}
        \|b(t^*)\|_{H^s(\R^3)} \ge \|\bar{b}(t^*)\|_{H^s(\R^3)} - \|\beta(t^*)\|_{H^s(\R^3)} \ge \varepsilon^{-2} - 1.
        \end{equation}
        Since $\varepsilon > 0$ can be arbitrarily small, this proves the norm inflation for the ideal MHD equations \eqref{ideal mhd}.
        
        Finally, we claim that the velocity field $u_{\varepsilon}(t)$ remains uniformly bounded in $L^\infty([0, t^*]; H^s)$. Indeed, by definition, the exact velocity is $u = \bar{u} + w$. We estimate the two components separately.
        
        According to \eqref{eq:approx_Wkp}, for all $t\in[0,t^*]$, the approximate velocity satisfies
        \begin{align}\label{u-Hk-ideal}
        \|\bar{u}(t)\|_{H^k(\R^3)} \lesssim \varepsilon^2\mu^{k-s}.
        \end{align}
        Therefore, by \eqref{u-Hk-ideal}, we evaluate the space-time norms over $t \in [0, t^*]$
        \begin{align*}
        \|\bar{u}\|_{L^{\infty}([0,t^*]; H^{s})}&= \sup_{t\in[0,t^*]}\|\bar{u}(t)\|_{H^{s}} \lesssim \varepsilon^2.
        \end{align*}
        For the difference in the velocity field $w$, Proposition \ref{prop:main_perturbation} yields that
        \begin{align*}
        \|w\|_{L^{\infty}([0,t^*]; H^{s})}&= \sup_{t\in[0,t^*]}\|w(t)\|_{H^{s}} \lesssim \mu^{-\frac{\gamma}{2}}.
        \end{align*}
        We now combine the above estimates to obtain
        \begin{align*}
        \|u\|_{L^\infty([0,t^*]; H^s)} \le \|\bar{u}\|_{L^\infty([0,t^*]; H^s)} + \|w\|_{L^\infty([0,t^*]; H^s)} \le C\left(\varepsilon^2 +\mu^{-\frac{\gamma}{2}}\right)\le \varepsilon,
        \end{align*}
        where the last inequality follows by choosing $\mu$ sufficiently large.
        
        Combining the above estimates with the initial bounds completes the proof of Theorem \ref{th-ill-ideal}.
       \end{proof}

    \section{Proof of Theorem \ref{th-ill-mhd-unified}}\label{sec-proof2}
    In this section, we sketch the proof of Theorem \ref{th-ill-mhd-unified} for the three dissipative variants of the 3D MHD equations. The constructions and perturbation arguments largely follow those of the ideal case. Over the short time interval $[0,t^*]$, the dissipative terms can be treated as perturbative errors. The main modifications are the adjustment of the initial amplitudes to match the Sobolev regularities $(s_u,s_b)$ and, when necessary, the use of weighted energy estimates to handle the regularity mismatch.

    \subsection{The non-resistive case: $H^s \times H^{s+1}$ for $0 < s < \frac{1}{2}$}\label{subsec-non-resistive case}
    
    For the non-resistive MHD equations, the magnetic field $b$ has one more derivative than the velocity field $u$. For $0<s<\frac12$, we choose the parameters
    \begin{equation}\label{s-non-resistive}
    \begin{cases}
    \gamma=\frac{\frac{1}{2}-s}{100}>0,\\
    \nu = \mu^{1-\gamma}.
    \end{cases}
    \end{equation}
    This choice guarantees the following inequality in the viscous regime:
    \begin{equation}\label{crucial_relation}
    \mu^2 \le \mu^{-1}\nu (\mu^{2-s}\nu^{\frac{1}{2}}).
    \end{equation}
    Indeed, the relation \eqref{crucial_relation} is equivalent to $\mu^{1+s} \le \nu^{\frac{3}{2}}$, which holds since $1+s < \frac{3}{2}(1-\gamma)$ by \eqref{s-non-resistive}.
    
    To reflect this regularity mismatch and ensure
    \[
    \|u_0\|_{H^s(\R^3)}+\|b_0\|_{H^{s+1}(\R^3)}\le \varepsilon,
    \]
    we modify the leading-order profiles as follows:
    \begin{equation*}
    \begin{cases}
    u_{0,\theta}(z,r)=0\\
    u_{0,r}(z,r)= -\varepsilon^2 \mu^{1-s}\nu^{\frac12}f_u'(\mu \rho)\partial_z\rho\\
    u_{0,z}(z,r)= \varepsilon^2 \mu^{1-s}\nu^{\frac12}f_u'(\mu \rho)\partial_r\rho
    \end{cases}
    \end{equation*}
    and
    \begin{equation*}
    \begin{cases}
    b_{0,\theta}(z,r)= \varepsilon^2 \mu^{-s}\nu^{\frac12}g_b(\mu \rho)\sin (\phi)\\
    b_{0,r}(z,r)=0\\
    b_{0,z}(z,r)=0
    \end{cases}
    \end{equation*}
    where $f_u$ and $g_b$ are as chosen in \eqref{suppfg}. Here, compared with the ideal case, the magnetic amplitude is smaller by a factor of $\mu^{-1}$.
    
    The critical time is defined as before by
    \[
    t^*=\varepsilon^{-N-2}\mu^{-2+s}\nu^{-\frac12}.
    \]
    
    In the non-resistive case, the approximate solutions $(\bar{u}, \bar{b})$ satisfy the following system:
    \begin{equation}
    \begin{cases}
    \partial_t \bar{u} - \Delta \bar{u} + \bar{u} \cdot \nabla \bar{u} - \bar{b} \cdot \nabla \bar{b} + \nabla \bar{P} = \overline{E}_{u} + \overline{E}_{vis}, \\
    \partial_t \bar{b} + \bar{u} \cdot \nabla \bar{b} - \bar{b} \cdot \nabla \bar{u} = \overline{E}_{b},\\
    (\bar{u},\bar{b})|_{t = 0} = (u_{0}, b_{0}),
    \end{cases}
    \end{equation}
    where $\bar{P}=p_0(r,z)$ is the approximate pressure from \eqref{pressure(r,z)}, $\overline{E}_u$ and $\overline{E}_b$ are the same as those in Section \ref{sec-approx}, and the new viscous error is given by $\overline{E}_{vis}:= -\Delta \bar{u}$. 
    
    Similar to the construction method and estimation of the approximate solution in the ideal case, we have estimates for $(\bar{u}, \bar{b})$ on the interval $t \in [0, t^*]$
    \begin{equation}\label{eq:approx_Wkp2}
    \begin{aligned}
    \|\bar{u}(t)\|_{W^{k,p}} &\le C_{k,p} \varepsilon^2 \mu^{k-s} \mu^{1-\frac{2}{p}} \nu^{\frac{1}{2}-\frac{1}{p}}, \\
    \|\bar{b}(t)\|_{W^{k,p}} &\le C_{k,p} \varepsilon^{2-kN} \mu^{k-(s+1)} \mu^{1-\frac{2}{p}} \nu^{\frac{1}{2}-\frac{1}{p}}.
    \end{aligned}
    \end{equation}
    
    Now, we need to justify that treating $\overline{E}_{vis}$ as a negligible error is compatible with the previous estimates in \eqref{error_L2}. Since $$\|\nabla^k \overline{E}_{vis}(t)\|_{L^2(\R^3)} = \|\nabla^{k+2} \bar{u}\|_{L^2(\R^3)} \lesssim \mu^{k+2-s},$$ it follows from \eqref{crucial_relation} that the total error is bounded by
    \begin{align*}
    \|\nabla^k\overline{E}_{u}(t)\|_{L^2(\R^3)}+\|\nabla^k\overline{E}_{vis}(t)\|_{L^2(\R^3)} &\lesssim \mu^{k-s}\left( \mu^{-1}\nu(\mu^{2-s}\nu^{\frac{1}{2}})\right)\\
    \|\nabla^k\overline{E}_{b}(t)\|_{L^2(\R^3)} &\lesssim \mu^{k-(s+1)}\left( \mu^{-1}\nu(\mu^{2-s}\nu^{\frac{1}{2}})\right).
    \end{align*}
    Thus, the viscous dissipation acts as an error.
    
    \begin{lemma}\label{lem:norm_inflation2}
    	There exists a universal constant $\varepsilon_0 \in (0, 1)$ such that for any $0 < \varepsilon \le \varepsilon_0$, there holds the lower bound
    	\begin{equation*}
    	\|\bar{b}(t^*)\|_{H^{s+1}(\R^3)} \ge \varepsilon^{-2}
    	\end{equation*}
    	where $t^*>0$ is the critical time.
    \end{lemma}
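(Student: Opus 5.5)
The argument follows the proof of Lemma \ref{lem:norm_inflation}, with the magnetic amplitude now $A' = \varepsilon^2\mu^{-s}\nu^{\frac12}$ in place of $A=\varepsilon^2\mu^{1-s}\nu^{\frac12}$. The transport equation \eqref{transport_b} is driven by the same velocity $(u_{0,r},u_{0,z})$. Since $f_u'=1$ on the support of $g_b(\mu\rho)$, the explicit solution is
\begin{equation*}
\bar b_\theta(t,r,z)=A' g_b(\mu\rho)\sin\bigl(\phi - tA\rho^{-1}\bigr).
\end{equation*}
The rotation rate is still governed by the velocity amplitude $A$, not by $A'$. The critical time $t^*=\varepsilon^{-N-2}\mu^{-2+s}\nu^{-\frac12}$ is unchanged, so the phase-gradient factor satisfies $t^*A\rho^{-2}\ge c_1\varepsilon^{-N}\mu$ exactly as in \eqref{factor}.

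\textbf{Step 1: $\dot H^1$ lower bound.} As before, $|\nabla\bar b|\ge|\partial_\rho\bar b_\theta|$. Differentiating in $\rho$ gives a leading term $A'g_b\cos(\cdot)\,t^*A\rho^{-2}$ and a remainder $A'\mu g_b'\sin(\cdot)$. The $L^2$ measure factor is $\mu^{-1}\nu^{-\frac12}$. This yields
\begin{equation*}
\|\bar b(t^*)\|_{\dot H^1}\ge C_1\varepsilon^{2-N}\mu^{-s}-C_2\varepsilon^2\mu^{-s}\ge C\varepsilon^{2-N}\mu^{-s}
\end{equation*}
for $\varepsilon$ small. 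To bound $\|g_b\cos(\phi-t^*A\rho^{-1})\|_{L^2}$ from below, I would note that on each circle $\rho=\text{const}$ the phase shift $t^*A\rho^{-1}$ is constant in $\phi$. Hence the angular average of $\cos^2$ is $\frac12$, and the estimate is uniform even though $r$ varies mildly over the torus. This same remark implicitly underlies the ideal case.

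\textbf{Step 2: interpolation to $\dot H^{s+1}$.} Since $0<s<\frac12$, the target index $s+1$ lies in $(1,\frac32)$. I interpolate $\dot H^1$ between $L^2$ and $\dot H^{s+1}$:
\begin{equation*}
\|\bar b\|_{\dot H^1}\le C\|\bar b\|_{L^2}^{\frac{s}{s+1}}\|\bar b\|_{\dot H^{s+1}}^{\frac{1}{s+1}}.
\end{equation*}
Rearranging, and using $\|\bar b\|_{L^2}\lesssim\varepsilon^2\mu^{-s-1}$ from \eqref{eq:approx_Wkp2} with $k=0$, $p=2$, I get
\begin{equation*}
\|\bar b(t^*)\|_{\dot H^{s+1}}\ge C\frac{(\varepsilon^{2-N}\mu^{-s})^{s+1}}{(\varepsilon^2\mu^{-s-1})^{s}}=C\varepsilon^{2-N(s+1)}.
\end{equation*}
The powers of $\mu$ cancel: $-s(s+1)+s(s+1)=0$. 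Since $N\ge 10/s$, we have $2-N(s+1)\le -8$. Finally, $\|\cdot\|_{H^{s+1}}\ge\|\cdot\|_{\dot H^{s+1}}$, and taking $\varepsilon_0$ small gives $\|\bar b(t^*)\|_{H^{s+1}}\ge\varepsilon^{-2}$.

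\textbf{Main obstacle.} The delicate point is the bookkeeping of amplitudes. One must check that the phase $tA\rho^{-1}$ in the explicit formula uses the velocity amplitude and not the reduced magnetic amplitude, since otherwise the inflation factor would lose a power of $\mu$. One must also check that the $\mu$-exponents cancel exactly in the interpolation, so the lower bound is purely a negative power of $\varepsilon$, uniform in $\mu$. The upper bound used in Step 2 is only the $L^2$ bound of $\bar b$, which is time-independent and needs no derivative count. This avoids any use of the higher $W^{k,p}$ bounds with their $\varepsilon^{-kN}$ losses.
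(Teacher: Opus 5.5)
Your proposal is correct and follows essentially the paper's route. The paper gives no separate proof of this lemma and relies on the argument of Lemma \ref{lem:norm_inflation}. You carry out its Case 1 (the $\dot H^1$ lower bound via $\partial_\rho\bar b_\theta$, then interpolation of $\dot H^1$ between $L^2$ and the target space) with magnetic amplitude $\varepsilon^2\mu^{-s}\nu^{\frac12}$, phase still driven by the velocity amplitude, and index $s+1$. Your bookkeeping is right: the $\mu$-powers cancel and you obtain $\|\bar b(t^*)\|_{\dot H^{s+1}}\gtrsim\varepsilon^{2-N(s+1)}$ with $2-N(s+1)\le -8$.
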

    In the case of the non-resistive MHD equations, we obtain  the following system for $(w,\beta)$
    \begin{equation}\label{eq:perturbation_system2}
    \begin{cases}
    \partial_t w -\Delta w+ u \cdot \nabla w +w \cdot \nabla \bar{u} + \nabla q = b \cdot \nabla \beta+ \beta \cdot \nabla \bar{b} - \overline{E}_u-\overline{E}_{vis}, \\
    \partial_t \beta + u \cdot \nabla \beta + w \cdot \nabla \bar{b}= b \cdot \nabla w + \beta \cdot \nabla \bar{u} - \overline{E}_b, \\
    \operatorname{div}w=0, \quad \operatorname{div}\beta=0, \\
    (w, \beta)|_{t=0} = (0, 0).
    \end{cases}
    \end{equation}
    
    Similar to \eqref{bootstrap_assumption} in Section \ref{sec-pertur}, we adjust the bootstrap assumption as follows.
    
    \noindent \textbf{Bootstrap assumption:} 
    Let $t_0\in(0,t^*]$ be the maximal time such that
    \begin{equation}\label{bootstrap_assumption2}
    \|\nabla w\|_{L^\infty} + \mu\|\nabla\beta\|_{L^\infty} \le 2M_\varepsilon \mu^{2-s}\nu^{\frac{1}{2}}, \quad \text{for all } t \in [0, t_0].
    \end{equation}
    Now, we define the following weighted energy functional
    \begin{equation*}
    Y_m(t)=\left(\|w(t)\|_{H^m(\R^3)}^2+\mu^2\|\beta(t)\|_{H^m(\R^3)}^2\right)^{\frac12}.
    \end{equation*}
    Equipped with this weighted energy functional, we derive the foundational  Sobolev estimates.
    
    \begin{lemma}\label{lem:Hk_estimate2}
    	For any real number $k > \frac{5}{2}$, under the bootstrap assumption there exist constants $C_\varepsilon, C_{k,\varepsilon} > 0$ independent of $\mu$ and $\nu$ such that for all $t \in [0, t_0]$, the following estimates hold:
    	\begin{align}
    	Y_0(t) &\le C_\varepsilon \mu^{-s}(\mu^{-1}\nu), \label{eq:Y0_est_nonresistive} \\
    	Y_k(t) &\le C_{k,\varepsilon} \mu^{k-s}. \label{eq:Yk_est_nonresistive}
    	\end{align}
    \end{lemma}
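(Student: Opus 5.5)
We follow the proof of Lemma \ref{lem:energy_est} line by line, carrying the weight $\mu$ on $\beta$ throughout. The key observation is that in the weighted variables $(w,\mu\beta)$ with weighted background $(\bar u,\mu\bar b)$, the bounds \eqref{eq:approx_Wkp2} for $\mu\bar b$ coincide with those of $\bar b$ in the ideal case. Likewise, the bootstrap assumption \eqref{bootstrap_assumption2} is exactly \eqref{bootstrap_assumption} for $(w,\mu\beta)$. The viscous term $-\Delta w$ only helps: $\langle -\Delta \Lambda^m w,\Lambda^m w\rangle=\|\nabla\Lambda^m w\|_{L^2}^2\ge0$ can be dropped. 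The extra forcing $\overline{E}_{vis}$ is, by \eqref{crucial_relation}, of the same size as $\overline{E}_u$, namely $\mu^{k-s}(\mu^{-1}\nu)\mu^{2-s}\nu^{\frac12}$ in $\dot H^k$. Similarly, $\mu\overline{E}_b$ obeys the same bound.

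\textbf{Step 1 ($L^2$ estimate).} Pair the first equation of \eqref{eq:perturbation_system2} with $w$ and the second with $\mu^2\beta$. The transport terms vanish by incompressibility. The delicate point is the coupling $\langle b\cdot\nabla\beta,w\rangle+\mu^2\langle b\cdot\nabla w,\beta\rangle$, which no longer cancels because of the weight. Writing $b=\bar b+\beta$, the leftover is $(\mu^2-1)\langle b\cdot\nabla w,\beta\rangle$. For the $\beta$-part, $\langle\beta\cdot\nabla w,\beta\rangle=-\langle\beta\cdot\nabla\beta,w\rangle$, which is bounded by $\|\nabla\beta\|_{L^\infty}\|\beta\|_{L^2}\|w\|_{L^2}$. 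For the $\bar b$-part, integrate by parts to get $|\langle\bar b\cdot\nabla w,\beta\rangle|\le\|\nabla\bar b\|_{L^\infty}\|w\|_{L^2}\|\beta\|_{L^2}$, or absorb via the dissipation $\|\nabla w\|_{L^2}^2$.

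With weights, $\mu^2\|\nabla\bar b\|_{L^\infty}\|w\|\|\beta\|=\mu\|\nabla\bar b\|_{L^\infty}\cdot\|w\|\cdot\mu\|\beta\|\lesssim\mu\|\nabla\bar b\|_{L^\infty}Y_0^2$, and $\mu\|\nabla\bar b\|_{L^\infty}\lesssim M_\varepsilon\mu^{2-s}\nu^{1/2}$ by \eqref{eq:approx_Wkp2}. All remaining linear terms are treated the same way: $\langle w\cdot\nabla\bar b,\mu^2\beta\rangle\le \mu\|\nabla\bar b\|_{\infty}Y_0^2$, and $\langle\beta\cdot\nabla\bar b,w\rangle\le\mu^{-1}\|\nabla\bar b\|_\infty Y_0^2$. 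So we obtain $\frac{d}{dt}Y_0\lesssim M_\varepsilon\mu^{2-s}\nu^{1/2}Y_0+\|\overline{E}_u+\overline{E}_{vis}\|_{L^2}+\mu\|\overline{E}_b\|_{L^2}$. Gronwall's inequality on $[0,t_0]$, with $t^*\mu^{2-s}\nu^{1/2}=\varepsilon^{-N-2}$, then gives \eqref{eq:Y0_est_nonresistive}.

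\textbf{Step 2 ($H^k$ estimate).} Apply $\Lambda^k$ and split every transport and stretching term into a commutator plus a top-order term, as in Lemma \ref{lem:energy_est}. The top-order magnetic coupling again leaves $(\mu^2-1)\langle b\cdot\nabla\Lambda^k w,\Lambda^k\beta\rangle$. This is the main obstacle, since it carries a full derivative on $\Lambda^k w$. To handle it, I would first integrate by parts to move the derivative onto $b\,\Lambda^k\beta$. This produces $\|\nabla b\|_{L^\infty}\|\Lambda^kw\|\,\|\Lambda^k\beta\|$, which is fine after weighting, plus $\langle b\cdot\nabla\Lambda^k\beta,\Lambda^k w\rangle$, which is still top order. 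Failing a cancellation there, I would use the viscosity: bound the term by $\mu^2\|b\|_{L^\infty}\|\nabla\Lambda^k w\|_{L^2}\|\Lambda^k\beta\|_{L^2}\le\frac12\|\nabla\Lambda^kw\|^2+C\mu^2\|b\|_{L^\infty}^2\,\mu^2\|\Lambda^k\beta\|^2$. We have $\mu^2\|\bar b\|_{L^\infty}^2\lesssim\varepsilon^4\mu^{2-2s}\nu$, and this is $\lesssim\mu^{2-s}\nu^{1/2}$ because $\mu^{-s}\nu^{1/2}\le1$ for $s<\frac12$ with $\nu=\mu^{1-\gamma}$. The $\beta$-contribution to $\|b\|_{L^\infty}$ is smaller by the bootstrap. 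So the coefficient of $Y_k$ stays $\lesssim M_\varepsilon\mu^{2-s}\nu^{1/2}$.

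The commutators and the Leibniz terms are estimated with Lemmas \ref{lem:commutator} and \ref{lem:fractional_leibniz} exactly as before, each $\beta$ or $\bar b$ factor paired with one $\mu$. This yields $\frac{d}{dt}Y_k\lesssim M_\varepsilon\mu^{2-s}\nu^{1/2}Y_k+S_k$ with $S_k\lesssim\mu^{k+2-2s}\nu^{1/2}$, the dominant part again coming from $\|\nabla w\|_\infty\|\bar u\|_{H^k}+\mu\|\nabla\beta\|_\infty\,\mu\|\bar b\|_{H^k}$. Gronwall's inequality over $[0,t_0]\subset[0,t^*]$ then gives \eqref{eq:Yk_est_nonresistive}.
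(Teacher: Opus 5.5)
There is a genuine gap, at the step you yourself flag as the main obstacle. Once $\beta$ carries the weight $\mu^2$, the top-order coupling $(\mu^2-1)\langle b\cdot\nabla\Lambda^k w,\Lambda^k\beta\rangle$ no longer cancels. Your absorption of it into the viscosity requires $\mu^2\|\bar b\|_{L^\infty}^2\lesssim\mu^{2-s}\nu^{\frac12}$, which you justify by $\mu^{-s}\nu^{\frac12}\le1$. That inequality is false for every $0<s<\frac12$: with $\nu=\mu^{1-\gamma}$ one has $\mu^{-s}\nu^{\frac12}=\mu^{\frac12-s-\frac{\gamma}{2}}$, and $\frac12-s-\frac{\gamma}{2}=(\frac12-s)(1-\frac{1}{200})>0$. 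On $[0,t^*]$ we have $\|\bar b(t)\|_{L^\infty}\sim\varepsilon^2\mu^{-s}\nu^{\frac12}$, since transport preserves the sup. Hence the Gronwall exponent produced by Young's inequality is $\int_0^{t^*}\mu^2\|\bar b\|_{L^\infty}^2\,\mathrm{d}\tau\sim\varepsilon^{2-N}\mu^{\frac12-s-\frac{\gamma}{2}}$, which tends to $\infty$ as $\mu\to\infty$, so no $\mu$-independent $C_{k,\varepsilon}$ can come out.

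Step 1 has the same defect. Because $\operatorname{div}\bar b=0$, integration by parts only gives $\langle\bar b\cdot\nabla w,\beta\rangle=-\langle\bar b\cdot\nabla\beta,w\rangle$. The derivative moves between $w$ and $\beta$ and never lands on $\bar b$, so the bound by $\|\nabla\bar b\|_{L^\infty}\|w\|_{L^2}\|\beta\|_{L^2}$ is not available; it fails for $w,\beta$ oscillating rapidly along $\bar b$. The fallbacks do not rescue it either. Absorbing into $\|\nabla w\|_{L^2}^2$ gives the same divergent exponent. The H\"older bound $\mu^2|\langle\bar b\cdot\nabla w,\beta\rangle|\le\mu\|\bar b\|_{L^2}\|\nabla w\|_{L^\infty}\,Y_0$, combined with the bootstrap, only yields $Y_0\lesssim_\varepsilon\mu^{-s}$ on $[0,t^*]$, missing the required factor $\mu^{-1}\nu=\mu^{-\gamma}$.

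The paper gives no separate argument for this lemma; it is stated as the weighted analogue of Lemma \ref{lem:energy_est}. So this loss of cancellation is precisely what a proof has to overcome.

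One route that appears to work is to keep the weight off the coupled estimate. For $\|w\|_{H^m}^2+\|\beta\|_{H^m}^2$ the cancellation is intact, and the proof of Lemma \ref{lem:energy_est} carries over: drop the dissipation and control $\overline{E}_{vis}$ by \eqref{crucial_relation}. This gives $\|w\|_{L^2}+\|\beta\|_{L^2}\lesssim_\varepsilon\mu^{-s-\gamma}$ and $\|w\|_{H^m}+\|\beta\|_{H^m}\lesssim_{m,\varepsilon}\mu^{m-s}$ for $m>\frac52$.

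The extra factor $\mu^{-1}$ for $\beta$ is then recovered from the $\beta$-equation alone, treating $b\cdot\nabla w$ as a source. The gain $t^*\|\bar b\|_{L^\infty}\lesssim\varepsilon^{-N}\mu^{-2}$ pays both for the weight and for the lost derivative. At level $k$ you control that derivative by the unweighted bound on $\|w\|_{H^{k+1}}$. At level $0$ you interpolate $\|\nabla w\|_{L^2}$, at the cost of an arbitrarily small power of $\mu$, which is harmless in the bootstrap closure. Do the $L^2$ level first, so that $\|\beta\|_{L^\infty}\lesssim\|\bar b\|_{L^\infty}$ and the term $\|\beta\|_{L^2}\|\nabla^{k+1}\bar u\|_{L^\infty}$ are already under control at level $k$.
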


    With the energy estimates established, we first close the bootstrap argument. We then apply an interpolation argument to obtain the following main proposition for the non-resistive case.
    \begin{prop}\label{prop:main_perturbation2}
    	Let $T = T(u_0, b_0) > 0$ be the maximal time of existence for the local-in-time smooth solution $(u, b)$ of the non-resistive MHD equations with the initial data defined by \eqref{ini-data}. For any $\varepsilon > 0$, there exists $\mu_0 > 0$ sufficiently large such that if $\mu \ge \mu_0$, then $0 < t^* < T$, namely, $(u, b) \in C([0,t^*];H^\infty(\R^3)\times H^\infty(\R^3))$.
    	
    	More quantitatively, for any $\varepsilon > 0$, if $\mu \ge \mu_0$, then
    	\begin{equation}\label{w+beta-ineq2}
    	\|w\|_{L^\infty([0,t^*]; H^s)}+\|\beta\|_{L^\infty([0,t^*]; H^{s+1})}\le C_{s,\varepsilon}\mu^{-\frac{\gamma}{2}},
    	\end{equation}
    	where $C_{s,\varepsilon}$ is independent of $\mu$.
    \end{prop}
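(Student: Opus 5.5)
The plan mirrors the proof of Proposition \ref{prop:main_perturbation}, carried out with the weighted functional $Y_m=(\|w\|_{H^m}^2+\mu^2\|\beta\|_{H^m}^2)^{1/2}$ and the weighted bootstrap assumption \eqref{bootstrap_assumption2}. We take Lemma \ref{lem:Hk_estimate2} as given: on $[0,t_0]$,
\[
Y_0\le C_\varepsilon\mu^{-s}(\mu^{-1}\nu), \qquad Y_k\le C_{k,\varepsilon}\mu^{k-s}.
\]
The weight $\mu$ in front of $\beta$ is what makes these bounds consistent with the one-derivative mismatch between $u$ and $b$. It comes from \eqref{eq:approx_Wkp2}, which says $\mu\bar b$ has the same size as $\bar u$. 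The viscous term $-\Delta w$ only helps in the energy identity and is discarded. The error $\overline{E}_{vis}$ has already been shown, via \eqref{crucial_relation}, to be of the same size as $\overline{E}_u$.

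\textbf{Step 1: closing the bootstrap.} Fix $\delta\in(0,\gamma/4)$ and $k\gg 5/2$. By Sobolev embedding and interpolation,
\[
\|\nabla w\|_{L^\infty}+\mu\|\nabla\beta\|_{L^\infty}\lesssim Y_{\frac52+\delta}\le Y_0^{1-\alpha}Y_k^{\alpha},\qquad \alpha=\tfrac{5/2+\delta}{k}.
\]
The $\mu$-exponent of the right-hand side is
\[
\tfrac52-s-\gamma+\delta+\gamma\alpha .
\]
The bootstrap threshold $2M_\varepsilon\mu^{2-s}\nu^{1/2}$ has exponent $\tfrac52-s-\tfrac{\gamma}{2}$. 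Their difference is $-\tfrac{\gamma}{2}+\delta+\gamma\alpha<0$ once $k$ is large, with $\gamma$ now taken from \eqref{s-non-resistive}. So for $\mu\ge\mu_0$ the bootstrap quantity stays at most half the threshold, and continuity gives $t_0=t^*$.

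\textbf{Step 2: excluding blow-up.} On $[0,t^*]$ we combine the bootstrap bound with \eqref{eq:approx_Wkp2}. This gives
\[
\|\nabla u\|_{L^\infty}+\|\nabla b\|_{L^\infty}\lesssim M_\varepsilon\mu^{2-s}\nu^{1/2},
\]
since $\mu^{-1}\le1$ and $\|\nabla\bar b\|_{L^\infty}\lesssim\mu^{1-s}\nu^{1/2}$. Hence
\[
\int_0^{t^*}\bigl(\|\nabla u\|_{L^\infty}+\|\nabla b\|_{L^\infty}\bigr)\,\d\tau\lesssim M_\varepsilon\varepsilon^{-N-2},
\]
a bound independent of $\mu$. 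A BKM-type continuation criterion for the non-resistive system (e.g. from \cite{fefferman2017local}) then gives $t^*<T$ and $H^\infty$ regularity. Strictly, only $\nabla u\in L^1_tL^\infty$ plus a transport estimate for $b$ is needed.

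\textbf{Step 3: the final estimate.} We interpolate at order $s+1$, not $s$, because $\beta$ must be controlled in $H^{s+1}$:
\[
\|w\|_{H^s}+\mu\|\beta\|_{H^{s+1}}\le Y_s+Y_{s+1}\lesssim Y_0^{1-\frac{s+1}{k}}Y_k^{\frac{s+1}{k}}\lesssim \mu^{-\gamma+\frac{(s+1)\gamma}{k}}\cdot\mu .
\]
Dividing the $\beta$ part by $\mu$ gives
\[
\|\beta\|_{H^{s+1}}\lesssim\mu^{-\gamma+(s+1)\gamma/k}.
\]
The $w$ part is estimated through $Y_s$ alone, giving $\|w\|_{H^s}\lesssim\mu^{-\gamma+s\gamma/k}$. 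Choosing $k$ large so that $(s+1)\gamma/k\le\gamma/2$ yields \eqref{w+beta-ineq2}.

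\textbf{Main obstacle.} The delicate point is that the weight must be placed correctly throughout. In Step 1 the bootstrap controls $\mu\|\nabla\beta\|_{L^\infty}$, which is exactly what $Y_{5/2+\delta}$ produces. In Step 3 the gain for $\beta$ in $H^{s+1}$ comes precisely from the factor $\mu$ in $Y$, compensating the extra derivative. I would double-check Step 3 first: the exponent bookkeeping must leave a net $\mu^{-\gamma/2}$ for $\beta$ in $H^{s+1}$.
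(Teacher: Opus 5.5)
Your proposal is correct and is essentially the paper's own argument: taking Lemma \ref{lem:Hk_estimate2} as given, you close the weighted bootstrap via Sobolev embedding and $Y_0$--$Y_k$ interpolation (with $\gamma$ from \eqref{s-non-resistive}), rule out blow-up with a BKM-type criterion, and interpolate at order $s+1$ so that the weight $\mu$ compensates the extra derivative on $\beta$. One aside is wrong, though it does not affect your proof since you only cite the lemma: with the weight $\mu^2$ on $\beta$ the magnetic coupling terms no longer cancel but leave $(\mu^2-1)\langle b\cdot\nabla \Lambda^k w,\Lambda^k\beta\rangle$, which loses a derivative on $w$, so the dissipation $-\Delta w$ is indispensable rather than discardable (and absorbing that term into the dissipation produces a Gronwall rate $\mu^2\|b\|_{L^\infty}^2$ with $t^*\mu^2\|\bar b\|_{L^\infty}^2\sim\varepsilon^{2-N}\mu^{\frac12-s-\frac{\gamma}{2}}$, so that lemma is more delicate than its ideal-case counterpart).
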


      By Lemma \ref{lem:norm_inflation2} and Proposition \ref{prop:main_perturbation2}, the exact magnetic field satisfies, at the critical time $t=t^\ast$, 
      \[ \|b(t^\ast)\|_{H^{s+1}} \geq \|\bar b(t^\ast)\|_{H^{s+1}} -\|\beta(t^\ast)\|_{H^{s+1}} \geq \varepsilon^{-2}-1 \geq \varepsilon^{-1}, \] 
      provided $\mu$ is sufficiently large. This proves the magnetic norm inflation.

    	 Now, we verify that the velocity field $u(t)$ remains small not only in $L^\infty([0, t^*]; H^s)$, but also in $L^2([0, t^*]; H^{s+1})\cap L^1([0, t^*]; H^{s+2})$. Indeed, by the decomposition $u = \bar{u} + w$, we estimate the two components separately.

According to \eqref{eq:approx_Wkp2}, for all $t\in[0,t^*]$, the approximate velocity satisfies
\begin{align}\label{u-Hk}
	\|\bar{u}(t)\|_{H^k(\R^3)} \lesssim \varepsilon^2\mu^{k-s}.
\end{align}
By the definition of $t^*$ and \eqref{s-non-resistive}, we have
\begin{align*}
t^*\mu^2=\varepsilon^{-N-2}\mu^{s-\frac12+\frac{\gamma}{2}}, \qquad s-\frac12+\frac{\gamma}{2}<0.
\end{align*}
Fix $0<\varepsilon\le\varepsilon_0$. We then choose $\mu$ sufficiently large depending on $\varepsilon$ such that
\begin{align}\label{mu-choice-non-resistive}
\mu \gg \nu \gg \varepsilon^{-1} \gg 1, \qquad t^*\mu^2\leq1,
\qquad \mu^{-\frac{\gamma}{2}}\lesssim\varepsilon^2.
\end{align}
Therefore, by \eqref{u-Hk} and \eqref{mu-choice-non-resistive}, we evaluate the space-time norms over $t \in [0, t^*]$
\begin{align*}
\|\bar{u}\|_{L^{\infty}([0,t^*]; H^{s})}&= \sup_{t\in[0,t^*]}\|\bar{u}(t)\|_{H^{s}} \lesssim \varepsilon^2,\\
\|\bar{u}\|_{L^2([0,t^*]; H^{s+1})}&= (t^*)^{\frac{1}{2}} \|\bar{u}(t)\|_{H^{s+1}} \lesssim \varepsilon^2(t^*\mu^2)^{\frac{1}{2}}\lesssim \varepsilon^2,\\
\|\bar{u}\|_{L^1([0,t^*]; H^{s+2})}&= t^* \|\bar{u}(t)\|_{H^{s+2}} \lesssim \varepsilon^2(t^*\mu^2)\lesssim \varepsilon^2.
\end{align*}

For the difference in the velocity field $w$, Proposition \ref{prop:main_perturbation2} yields that, for all $t\in[0,t^*]$,
\begin{align}\label{w-Hk}
	\|w(t)\|_{H^s} \lesssim \mu^{-\frac{\gamma}{2}}.
\end{align}
Therefore, by \eqref{w-Hk}, we have the space-time norms over $t \in [0, t^*]$
\begin{align*}
\|w\|_{L^{\infty}([0,t^*]; H^{s})}&= \sup_{t\in[0,t^*]}\|w(t)\|_{H^{s}} \lesssim \mu^{-\frac{\gamma}{2}}\lesssim\varepsilon^2.
\end{align*} 
    Furthermore, from \eqref{eq:Y0_est_nonresistive} and \eqref{eq:Yk_est_nonresistive} in Lemma \ref{lem:Hk_estimate2}, we evaluate
    \begin{align*}
    \|w(t)\|_{H^{s+1}} &\le \|w(t)\|_{L^2}^{1-\frac{s+1}{k}} \|w(t)\|_{H^k}^{\frac{s+1}{k}} \lesssim (\mu^{-s-\gamma})^{1-\frac{s+1}{k}} (\mu^{k-s})^{\frac{s+1}{k}} = \mu^{1-\gamma + \frac{(s+1)\gamma}{k}}, \\
    \|w(t)\|_{H^{s+2}} &\le \|w(t)\|_{L^2}^{1-\frac{s+2}{k}} \|w(t)\|_{H^k}^{\frac{s+2}{k}} \lesssim (\mu^{-s-\gamma})^{1-\frac{s+2}{k}} (\mu^{k-s})^{\frac{s+2}{k}} = \mu^{2-\gamma + \frac{(s+2)\gamma}{k}},
    \end{align*}
    from which it follows that
    \begin{align*}
    \|w\|_{L^2([0,t^*]; H^{s+1})} &\lesssim (t^*)^{\frac{1}{2}} \mu^{1-\gamma + \frac{(s+1)\gamma}{k}} = (t^*\mu^2)^{\frac{1}{2}} \mu^{-\gamma + \frac{(s+1)\gamma}{k}}\lesssim (t^*\mu^2)^{\frac12}\mu^{-\frac{\gamma}{2}}\lesssim\varepsilon^2,\\
    \|w\|_{L^1([0,t^*]; H^{s+2})} & \lesssim t^* \mu^{2-\gamma + \frac{(s+2)\gamma}{k}} = (t^*\mu^2) \mu^{-\gamma + \frac{(s+2)\gamma}{k}}\lesssim (t^*\mu^2)\mu^{-\frac{\gamma}{2}}\lesssim\varepsilon^2,
    \end{align*}
    where we have chosen $k$ sufficiently large so that $\frac{(s+1)\gamma}{k}$ and $\frac{(s+2)\gamma}{k}$ are less than $\frac{\gamma}{2}$, used \eqref{mu-choice-non-resistive}. Combining the above estimates and $u=\bar{u}+w$, we obtain
    \begin{align*}
    &\|u(t)\|_{L^\infty_{t^*}H^s\cap L^2_{t^*}H^{s+1}\cap L^1_{t^*}H^{s+2}}\le \|\bar{u}(t)\|_{L^\infty_{t^*}H^s\cap L^2_{t^*}H^{s+1}\cap L^1_{t^*}H^{s+2}} + \|w(t)\|_{L^\infty_{t^*}H^s\cap L^2_{t^*}H^{s+1}\cap L^1_{t^*}H^{s+2}} \le \varepsilon,
    \end{align*}
    
    In conclusion, we have
    \begin{equation*}
    \begin{dcases}
    \|b(t^\ast)\|_{H^{s+1}}\geq \frac{1}{\varepsilon},\\
    \|u\|_{L^\infty([0, t^*]; H^s)\cap L^2([0, t^*]; H^{s+1})\cap L^1([0, t^*]; H^{s+2})}\le \varepsilon,
    \end{dcases}
    \qquad 0<t^*\leq\varepsilon.
    \end{equation*}    
    This completes the proof in the non-resistive case.

    \begin{remark}\label{rem:space_time_meaning}
    	We point out that in the supercritical Sobolev regime $0 < s <\frac{1}{2}$, the exact velocity field remains small in the space-time norms $L^2([0,t^*]; H^{s+1}(\R^3))$ and $L^1([0,t^*]; H^{s+2}(\R^3))$ required by Fefferman et al. \cite{fefferman2017local}. Nevertheless, the magnetic field still undergoes norm inflation in $H^{s+1}$.
    \end{remark}

    \subsection{The non-viscous case: $H^s \times H^{s-1}$ for $1<s < \frac{5}{2}$}
    For the non-viscous MHD equations, the velocity field $u$ has one more derivative than the magnetic field $b$. For $1 < s < \frac{5}{2}$, we choose the parameters
    \begin{equation}\label{s-non-viscous}
    \begin{cases}
    \gamma = \frac{\frac{5}{2}-s}{100} > 0, \\
    \nu = \mu^{1-\gamma}.
    \end{cases}
    \end{equation}
    To ensure that
    \[
    \|u_0\|_{H^s(\R^3)} + \|b_0\|_{H^{s-1}(\R^3)} \le \varepsilon,
    \]
    we modify the leading-order profiles as follows:
    \begin{equation*}
    \begin{cases}
    u_{0,\theta}(z,r) = \varepsilon^2 \mu^{1-s}\nu^{\frac12}f_u(\mu \rho)\sin (\phi) \\
    u_{0,r}(z,r) = -\varepsilon^2 \mu^{1-s}\nu^{\frac12}f_u'(\mu \rho)\partial_z\rho\\
    u_{0,z}(z,r) = \varepsilon^2 \mu^{1-s}\nu^{\frac12}f_u'(\mu \rho)\partial_r\rho
    \end{cases}
    \end{equation*}
    and
    \begin{equation*}
    \begin{cases}
    b_{0,\theta}(z,r) = \varepsilon^2 \mu^{-M}\nu^{\frac12}g_b(\mu \rho)\sin (\phi) \\
    b_{0,r}(z,r) = 0 \\
    b_{0,z}(z,r) = 0
    \end{cases}
    \end{equation*}
    where $M\ge 10$ is a fixed constant, and $f_u$ and $g_b$ are as chosen in \eqref{suppfg}.
    
    The critical time is defined identically as before by
    \begin{align*}
     t^* = \varepsilon^{-N-2}\mu^{-2+s}\nu^{-\frac12}.
    \end{align*}
  
    In the non-viscous case, we define the approximate solutions $(\bar{u}, \bar{b})$ as follows
    \begin{equation*}
    \begin{aligned}
    \bar{u}(t, x) &=  \bar{u}_\theta\mathbf{e}_\theta+u_{0,r} \mathbf{e}_r + (u_{0,z} + u_c) \mathbf{e}_z, \\
    \bar{b}(t, x) &= \bar{b}_\theta \mathbf{e}_\theta,
    \end{aligned}
    \end{equation*}
    where the toroidal velocity $\bar{u}_\theta$ satisfies the linear transport equation
    \begin{equation*}
    \begin{cases}
    \partial_t \bar{u}_\theta + (u_{0,r} \partial_r + u_{0,z} \partial_z) \bar{u}_\theta = 0,\\ 
    \bar{u}_\theta|_{t=0} = u_{0,\theta},
    \end{cases}
    \end{equation*}
    and the toroidal magnetic field $\bar{b}_\theta$ satisfies
    \begin{equation*}
    \begin{cases}
    \partial_t \bar{b}_\theta + (u_{0,r} \partial_r + u_{0,z} \partial_z) \bar{b}_\theta = 0,\\ 
    \bar{b}_\theta|_{t=0} = b_{0,\theta}.
    \end{cases}
    \end{equation*}
    
    Moreover, the approximate solutions $(\bar{u}, \bar{b})$ satisfy the following system:
    \begin{equation*}
    \begin{cases}
    \partial_t \bar{u} + \bar{u} \cdot \nabla \bar{u} - \bar{b} \cdot \nabla \bar{b} + \nabla \bar{P} = \overline{E}_{u}, \\
    \partial_t \bar{b} - \Delta \bar{b} + \bar{u} \cdot \nabla \bar{b} - \bar{b} \cdot \nabla \bar{u} = \overline{E}_{b} + \overline{E}_{res}, \\
    (\bar{u},\bar{b})|_{t = 0} = (u_{0}, b_{0}),
    \end{cases}
    \end{equation*}
    where $\bar{P}=p_0(r,z)$ is the approximate pressure from \eqref{pressure(r,z)}. After canceling the leading-order transport terms and the steady 2D Euler interactions, the non-zero components of the residual error fields $\overline{E}_u$ and $\overline{E}_b$ are given by
    \begin{equation}\label{non_viscous_error_components}
    \begin{cases}
    \overline{E}_{u,\theta} = u_c \partial_z \bar{u}_\theta + \frac{u_{0,r} \bar{u}_\theta}{r}, \\[8pt]
    \overline{E}_{u,r} = u_c \partial_z u_{0,r} - \frac{\bar{u}_\theta^2}{r} + \frac{\bar{b}_\theta^2}{r}, \\[8pt]
    \overline{E}_{u,z} = u_{0,r} \partial_r u_c + u_{0,z} \partial_z u_c + u_c \partial_z u_{0,z} + u_c \partial_z u_c, \\[8pt]
    \overline{E}_{b,\theta} = u_c \partial_z \bar{b}_\theta - \frac{u_{0,r} \bar{b}_\theta}{r},
    \end{cases}
    \end{equation}
    and the new resistive error is given by $\overline{E}_{res} := -\Delta \bar{b}$.
    
    Similar to the estimation of the approximate solutions in the ideal case, we have estimates for $(\bar{u}, \bar{b})$ on the interval $t \in[0, t^*]$
    \begin{equation}\label{eq:approx_Wkp3}
    \begin{aligned}
    \|\bar{u}(t)\|_{W^{k,p}(\R^3)} &\le C_{k,p} \varepsilon^{2-kN}\mu^{k-s} \mu^{1-\frac{2}{p}} \nu^{\frac{1}{2}-\frac{1}{p}}, \\
    \|\bar{b}(t)\|_{W^{k,p}(\R^3)} &\le C_{k,p} \varepsilon^{2-kN}\mu^{k-M-1} \mu^{1-\frac{2}{p}} \nu^{\frac{1}{2}-\frac{1}{p}}.
    \end{aligned}
    \end{equation}
    
    Now, we need to justify that treating $\overline{E}_{res}$ as a negligible error is compatible with the previous estimates. Since 
    $$\|\nabla^k \overline{E}_{res}(t)\|_{L^2(\R^3)} = \|\nabla^{k+2} \bar{b}\|_{L^2(\R^3)} \lesssim \mu^{k-M+1},$$ 
    it follows  that the total error \eqref{non_viscous_error_components} is bounded by
    \begin{align*}
    \|\nabla^k\overline{E}_{u}(t)\|_{L^2(\R^3)} &\lesssim \mu^{k-s}\left( \mu^{-1}\nu(\mu^{2-s}\nu^{\frac{1}{2}})\right), \\
    \|\nabla^k\overline{E}_{b}(t)\|_{L^2(\R^3)} + \|\nabla^k\overline{E}_{res}(t)\|_{L^2(\R^3)} &\lesssim \mu^{k-M-s}\nu^{\frac{3}{2}}+\mu^{k-M+1}\\
    &\lesssim \mu^{k-M+1}.
    \end{align*}
    Thus, the resistive dissipation acts as an error.
    
    \begin{lemma}\label{lem:norm_inflation3}
    	There exists a universal constant $\varepsilon_0 \in (0, 1)$ such that for any $0 < \varepsilon \le \varepsilon_0$, there holds the lower bound
    	\begin{equation*}
    	\|\bar{u}(t^*)\|_{H^{s}(\R^3)} \ge \varepsilon^{-2}
    	\end{equation*}
    	where $t^*>0$ is the critical time.
    \end{lemma}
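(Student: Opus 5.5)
The plan is to mirror the proof of Lemma \ref{lem:norm_inflation}, now applied to the toroidal velocity $\bar u_\theta$ instead of $\bar b_\theta$. The approximate velocity is
\[
\bar u=\bar u_\theta\mathbf{e}_\theta+u_{0,r}\mathbf{e}_r+(u_{0,z}+u_c)\mathbf{e}_z .
\]
Since $\bar u_\theta$ solves the same transport equation as $\bar b_\theta$ in \eqref{transport_b}, I will solve it explicitly in the shifted polar coordinates $(\rho,\phi)$. On $\operatorname{supp} f_u(\mu\cdot)$ the cutoff $f_u'(\mu\rho)$ is not identically $1$, so the angular speed is $A f_u'(\mu\rho)\rho^{-1}$ with $A=\varepsilon^2\mu^{1-s}\nu^{\frac12}$. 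This gives
\[
\bar u_\theta(t)=A f_u(\mu\rho)\sin\bigl(\phi-tA f_u'(\mu\rho)\rho^{-1}\bigr).
\]
To get a clean lower bound, I will restrict attention to the annulus $\{1\le\mu\rho\le\frac32\}$. There $f_u'=1$, the phase is exactly $\phi-tA\rho^{-1}$, and $f_u(\mu\rho)$ is a smooth function. I need $f_u$ to be nonvanishing on part of this annulus. This holds, because $f_u'=1$ there forces $f_u$ to be strictly increasing on $[1,\frac32]$, so $|f_u|\ge c_0>0$ on a subinterval of positive length.

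\textbf{Step 1 ($\dot H^1$ lower bound).} Pointwise I have $|\nabla\bar u|\ge|\nabla(\bar u_\theta\mathbf{e}_\theta)|$ minus lower-order contributions. Strictly, the poloidal part is mixed in, so I will use $\|\nabla\bar u\|_{L^2}\ge\|\nabla(\bar u_\theta\mathbf{e}_\theta)\|_{L^2}-C\varepsilon^2\mu^{1-s}$. This uses the orthogonality of $\mathbf{e}_\theta$ to the poloidal directions together with Lemma \ref{lem:initial_estimates}. Next, $|\nabla(\bar u_\theta\mathbf{e}_\theta)|\ge|\partial_\rho\bar u_\theta|$. Restricted to the good annulus,
\[
\partial_\rho\bar u_\theta=A\mu f_u'\sin(\cdot)+A f_u\cos(\cdot)\,tA\rho^{-2}.
\]
At $t=t^*$ one has $t^*A\rho^{-2}\gtrsim\varepsilon^{-N}\mu$, as in \eqref{factor}. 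I will then lower-bound $\|f_u\cos(\phi-t^*A\rho^{-1})\|_{L^2}$ over the region where $|f_u|\ge c_0$. Integrating in $\phi$ first gives $\int_0^{2\pi}\cos^2=\pi$, independent of the phase shift, so this norm is $\gtrsim\mu^{-1}\nu^{-\frac12}$. This yields
\[
\|\bar u(t^*)\|_{\dot H^1}\ge C\varepsilon^{2-N}\mu^{1-s}.
\]

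\textbf{Step 2 (interpolation).} For $1<s<\frac52$ I am in Case 1 of Lemma \ref{lem:norm_inflation}. Interpolating $\dot H^1$ between $L^2$ and $\dot H^s$, with $\|\bar u\|_{L^2}\lesssim\varepsilon^2\mu^{-s}$, gives
\[
\|\bar u(t^*)\|_{\dot H^s}\gtrsim\varepsilon^{2-Ns}\ge\varepsilon^{-8}.
\]
The $L^2$ bound is unchanged by transport, which is measure preserving, so the $\varepsilon^{2-kN}$ loss of \eqref{eq:approx_Wkp3} at $k=0$ is harmless. Since $N\ge10/s$, for $\varepsilon\le\varepsilon_0$ this gives $\|\bar u(t^*)\|_{H^s}\ge\varepsilon^{-2}$.

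\textbf{Main obstacle.} The hard step is the lower bound in Step 1, precisely because $f_u'\ne1$ near the edges of the support. Outside the good annulus the phase gradient contains extra terms, and I will not attempt to control them from below. Instead I will simply discard those regions, using $|\nabla\bar u|^2\ge|\nabla\bar u|^2\mathbf{1}_{\{1\le\mu\rho\le3/2\}}$. I also need to check that the non-oscillating term $A\mu f_u'\sin$, together with the poloidal gradient, is of size only $O(\varepsilon^2\mu^{1-s})$ in $L^2$, and hence negligible against $\varepsilon^{2-N}\mu^{1-s}$.
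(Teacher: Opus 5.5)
Your proposal is correct and follows the route the paper intends: the paper states this lemma without proof, by analogy with Lemma~\ref{lem:norm_inflation}, and your restriction to the annulus $\{1\le\mu\rho\le\frac32\}$ is exactly the extra observation that analogy requires, because $f_u$, unlike $g_b$, is not supported where $f_u'\equiv1$ (on that annulus the phase is $\phi-t^*A\rho^{-1}$ and $f_u$ is linear, hence bounded away from zero on a subinterval). One minor correction: the poloidal flow preserves $\d r\,\d z$ rather than the 3D measure $r\,\d r\,\d z\,\d\theta$, so ``measure preserving'' is not the right justification, but the bound $\|\bar u(t^*)\|_{L^2}\lesssim\varepsilon^2\mu^{-s}$ you need still follows from $|\bar u_\theta|\le A\|f_u\|_{L^\infty}$ on a fixed support of volume $\sim\mu^{-2}\nu^{-1}$.
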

    
    In the case of the non-viscous MHD equations, we obtain the following system for $(w,\beta)$
    \begin{equation}\label{eq:perturbation_system3}
    \begin{cases}
    \partial_t w + u \cdot \nabla w + w \cdot \nabla \bar{u} + \nabla q = b \cdot \nabla \beta + \beta \cdot \nabla \bar{b} - \overline{E}_u, \\
    \partial_t \beta - \Delta \beta + u \cdot \nabla \beta + w \cdot \nabla \bar{b} = b \cdot \nabla w + \beta \cdot \nabla \bar{u} - \overline{E}_b-\overline{E}_{res}, \\
    \operatorname{div}w=0, \quad \operatorname{div}\beta=0, \\
    (w, \beta)|_{t=0} = (0, 0).
    \end{cases}
    \end{equation}
    
    Similar to the ideal case, we also define the bootstrap assumption as follows.
    
    \noindent \textbf{Bootstrap assumption:} 
    Let $t_0\in(0,t^*]$ be the maximal time such that
    \begin{equation}\label{bootstrap_assumption3}
    \|\nabla w(t)\|_{L^\infty(\R^3)} + \|\nabla \beta(t)\|_{L^\infty(\R^3)} \le 2M_\varepsilon \mu^{2-s}\nu^{\frac{1}{2}}, \quad \text{for all } t \in [0, t_0].
    \end{equation}
    
    Now, we define the following energy functional:
    \begin{equation*}
   Y_m(t)=\left(\|w(t)\|_{H^m(\R^3)}^2+\|\beta(t)\|_{H^m(\R^3)}^2\right)^{\frac12}.
    \end{equation*}
    
    Equipped with this energy functional, we derive the following Sobolev estimates.

    \begin{lemma}\label{lem:Hk_non_viscous}
    	For any real number $k > \frac{5}{2}$, under the bootstrap assumption \eqref{bootstrap_assumption3}, there exist constants $C_\varepsilon, C_{k,\varepsilon} > 0$ independent of $\mu$ and $\nu$ such that for all $t \in [0, t_0]$,
    	\begin{align}
    	Y_0(t) &\le C_\varepsilon \mu^{-s}(\mu^{-1}\nu), \label{eq:Y0_est_nonviscous} \\
    	Y_k(t) &\le C_{k,\varepsilon} \mu^{k-s}. \label{Hk_estimate3}
    	\end{align}
    \end{lemma}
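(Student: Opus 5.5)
We will mirror the proof of Lemma \ref{lem:energy_est}, adapted to the non-viscous system \eqref{eq:perturbation_system3}. The only structural differences are the dissipative term $-\Delta\beta$, which has a good sign, and the resistive error $\overline{E}_{res}$. The approximate velocity is now $\bar u$ with swirl $\bar u_\theta$, so \eqref{eq:approx_Wkp3} replaces \eqref{eq:approx_Wkp}. We proceed in three steps.

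\textbf{Step 1: the $L^2$ bound \eqref{eq:Y0_est_nonviscous}.} Pair the $w$-equation with $w$ and the $\beta$-equation with $\beta$. As before, the transport terms and the pressure vanish by incompressibility, and the cross terms $\langle b\cdot\nabla\beta,w\rangle+\langle b\cdot\nabla w,\beta\rangle$ cancel. The resistive term contributes $-\|\nabla\beta\|_{L^2}^2\le0$, so we drop it. This gives
\[
\tfrac12\tfrac{\d}{\d t}Y_0^2\le\bigl(\|\nabla\bar u\|_{L^\infty}+\|\nabla\bar b\|_{L^\infty}\bigr)Y_0^2+\bigl(\|\overline E_u\|_{L^2}+\|\overline E_b\|_{L^2}+\|\overline E_{res}\|_{L^2}\bigr)Y_0 .
\]
By \eqref{eq:approx_Wkp3}, $\|\nabla\bar u\|_{L^\infty}\lesssim_\varepsilon\mu^{2-s}\nu^{1/2}$ and $\|\nabla\bar b\|_{L^\infty}$ is far smaller. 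Hence $t^*$ times the Gronwall exponent is $O_\varepsilon(1)$. The error bounds stated above give $\|\overline E_u\|_{L^2}\lesssim\mu^{-s}(\mu^{-1}\nu)\mu^{2-s}\nu^{1/2}$ and $\|\overline E_b\|_{L^2}+\|\overline E_{res}\|_{L^2}\lesssim\mu^{1-M}$. Since $M\ge10$ and $s<\frac52$, the magnetic error is negligible next to the velocity error. Lemma \ref{lem:gronwall} then yields $Y_0\lesssim_\varepsilon t^*\mu^{1-2s}\nu^{3/2}=C_\varepsilon\mu^{-s}(\mu^{-1}\nu)$.

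\textbf{Step 2: the $H^k$ bound \eqref{Hk_estimate3}.} Apply $\Lambda^k$, pair with $\Lambda^kw$ and $\Lambda^k\beta$, and again discard $-\|\Lambda^{k}\nabla\beta\|_{L^2}^2$. The commutator decomposition is unchanged, including the cancellation of $\langle b\cdot\nabla(\Lambda^k\beta\cdot\Lambda^kw),1\rangle$. We apply Lemmas \ref{lem:commutator} and \ref{lem:fractional_leibniz} as before, and obtain \eqref{eq:Yk_ODE} with $\overline E_{res}$ added to the source $S_k$.

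We bound $S_k$ term by term:
\begin{itemize}
\item $Y_0\|\nabla^{k+1}\bar u\|_{L^\infty}\lesssim\mu^{k+1-2s}\nu^{3/2}$.
\item $(\|\nabla w\|_{L^\infty}+\|\nabla\beta\|_{L^\infty})(\|\bar u\|_{H^k}+\|\bar b\|_{H^k})\lesssim\mu^{2-s}\nu^{1/2}\mu^{k-s}$, using the bootstrap \eqref{bootstrap_assumption3}.
\item The error contributions are at most $\mu^{k-s}\mu^{1-2\cdot0}\dots$, which is bounded by $\mu^{k+2-2s}\nu^{1/2}$; in particular $\mu^{k+1-M}$ is harmless since $M\ge10>\frac52$.
\end{itemize}
Thus $S_k\lesssim_{k,\varepsilon}\mu^{k+2-2s}\nu^{1/2}$. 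The coefficient of $Y_k$ is at most $3M_\varepsilon\mu^{2-s}\nu^{1/2}$, so Gronwall over $[0,t^*]$ gives $Y_k\lesssim t^*\mu^{k+2-2s}\nu^{1/2}=C_{k,\varepsilon}\mu^{k-s}$.

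\textbf{Step 3: the main obstacle.} The key difficulty is bookkeeping the $\varepsilon$-losses, $\varepsilon^{2-kN}$ in \eqref{eq:approx_Wkp3}, now present in $\bar u$ because the swirl $\bar u_\theta$ is sheared. We must ensure that all such factors are absorbed into constants depending only on $\varepsilon$ and $k$, never costing powers of $\mu$. This holds because the per-derivative amplification is $\varepsilon^{-N}\mu$ on $[0,t^*]$.

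A second point to check is that the swirl-generated error terms $\bar u_\theta^2/r$ and $u_{0,r}\bar u_\theta/r$ really carry the gain $\mu^{-1}\nu$. They do, since $1/r\sim\nu$ on the support while $\nabla\sim\mu$, exactly as in \eqref{error-r}.
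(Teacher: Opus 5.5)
Your proposal is correct and takes the same route as the paper. The paper gives no separate proof of this lemma; it simply repeats Lemma \ref{lem:energy_est}, discarding the resistive terms $-\|\nabla\beta\|_{L^2}^2$ and $-\|\Lambda^k\nabla\beta\|_{L^2}^2$ by sign, adding $\overline{E}_{res}$ to the source, and absorbing the $\varepsilon^{2-kN}$ losses from the sheared swirl $\bar u_\theta$ into $\mu$-independent constants, exactly as you do.

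The only blemish is your garbled third bullet in Step 2, which should read
\[
\|\overline E_u\|_{H^k}\lesssim_{k,\varepsilon}\mu^{k+1-2s}\nu^{3/2}\le\mu^{k+2-2s}\nu^{1/2},
\qquad
\|\overline E_b\|_{H^k}+\|\overline E_{res}\|_{H^k}\lesssim_{k,\varepsilon}\mu^{k+1-M}.
\]
The second bound is dominated by $\mu^{k+2-2s}\nu^{1/2}$ because $M\ge 10>2s-\tfrac32+\tfrac{\gamma}{2}$; the comparison $M\ge10>\tfrac52$ that you wrote is not the relevant one.
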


    With the energy estimates established, we state the main proposition for the non-viscous case, closing the bootstrap argument.
    
    \begin{prop}\label{prop:main_perturbation3}
    	Let $T = T(u_0, b_0) > 0$ be the maximal time of existence for the local-in-time smooth solution $(u, b)$ of the non-viscous MHD equations with the given initial data. For any $\varepsilon > 0$, there exists $\mu_0 > 0$ sufficiently large such that if $\mu \ge \mu_0$, then $0 < t^* < T$, namely, $(u, b) \in C([0,t^*];H^\infty(\R^3)\times H^\infty(\R^3))$.
    	
    	More quantitatively, for any $\varepsilon > 0$, if $\mu \ge \mu_0$, then
    	\begin{equation}\label{w+beta-ineq3}
    	\|w\|_{L^\infty([0,t^*]; H^s)}+\|\beta\|_{L^\infty([0,t^*]; H^s)}\le C_{s,\varepsilon}\mu^{-\frac{\gamma}{2}},
    	\end{equation}
    	where $C_{s,\varepsilon}$ is independent of $\mu$.
    \end{prop}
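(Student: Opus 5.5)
The plan is to follow the template of Proposition \ref{prop:main_perturbation} essentially verbatim, with the resistive term handled by its favorable sign. Fix $\varepsilon>0$. By \eqref{eq:approx_Wkp3}, and since $N$ is fixed, there is $M_\varepsilon\ge1$ with
\[
\|\nabla\bar u(t)\|_{L^\infty}+\|\nabla\bar b(t)\|_{L^\infty}\le M_\varepsilon\mu^{2-s}\nu^{\frac12}\quad\text{on }[0,t^*].
\]
Here the magnetic part is even negligible, being of size $\mu^{2-M}\nu^{1/2}$. Take $t_0\le t^*$ maximal so that the bootstrap assumption \eqref{bootstrap_assumption3} holds; this is possible because $(w,\beta)(0)=0$ and the solution is smooth near $t=0$. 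The first task is to prove Lemma \ref{lem:Hk_non_viscous} on $[0,t_0]$.

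For the $L^2$ bound, test \eqref{eq:perturbation_system3} with $(w,\beta)$. Transport terms and pressure vanish by incompressibility, and the two $b\cdot\nabla$ terms cancel. The dissipative term gives $-\|\nabla\beta\|_{L^2}^2\le0$, which I simply drop. The resulting inequality is
\[
\tfrac{\d}{\d t}Y_0\lesssim(\|\nabla\bar u\|_{L^\infty}+\|\nabla\bar b\|_{L^\infty})Y_0+\|\overline E_u\|_{L^2}+\|\overline E_b\|_{L^2}+\|\overline E_{res}\|_{L^2}.
\]
The error bounds stated just before the proposition give $\|\overline E_u\|_{L^2}\lesssim\mu^{1-2s}\nu^{3/2}$, and also $\|\overline E_b\|_{L^2}+\|\overline E_{res}\|_{L^2}\lesssim\mu^{1-M}$, which is far smaller since $M\ge10>2s$. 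Gronwall (Lemma \ref{lem:gronwall}) with $t^*M_\varepsilon\mu^{2-s}\nu^{1/2}=M_\varepsilon\varepsilon^{-N-2}$ yields $Y_0\le C_\varepsilon\mu^{-s}(\mu^{-1}\nu)$. For $Y_k$, apply $\Lambda^k$ and repeat the commutator computation of Lemma \ref{lem:energy_est}. The Laplacian again contributes $-\|\Lambda^{k+1}\beta\|_{L^2}^2\le0$, so the estimates of Lemmas \ref{lem:commutator} and \ref{lem:fractional_leibniz} carry over unchanged. This gives \eqref{eq:Yk_ODE} plus the extra source $\|\overline E_{res}\|_{H^k}\lesssim\mu^{k+1-M}$, which is dominated by $\mu^{k+2-2s}\nu^{1/2}$. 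One point needs checking: $\bar u$ now carries the swirl $\bar u_\theta$, so its $W^{k,p}$ norms carry the factor $\varepsilon^{-kN}$. This changes only $\varepsilon$-dependent constants, not powers of $\mu$, so $S_k\lesssim_{k,\varepsilon}\mu^{k+2-2s}\nu^{1/2}$ and Gronwall gives $Y_k\le C_{k,\varepsilon}\mu^{k-s}$.

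To close the bootstrap, fix $\delta\in(0,\gamma/4)$ and interpolate:
\[
\|\nabla(w,\beta)\|_{L^\infty}\lesssim Y_{5/2+\delta}\le Y_0^{1-\alpha}Y_k^{\alpha},\qquad \alpha=\tfrac{5/2+\delta}{k}.
\]
This produces the $\mu$-exponent $\tfrac52-s-\gamma+\delta+\gamma\alpha$. For $k$ large this is strictly below $\tfrac52-s-\tfrac\gamma2$, the exponent of $\mu^{2-s}\nu^{1/2}$. Hence the bootstrap bound improves to $\ll M_\varepsilon\mu^{2-s}\nu^{1/2}$ for $\mu\ge\mu_0(\varepsilon)$, and continuity gives $t_0=t^*$. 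Then $\int_0^{t^*}(\|\nabla u\|_{L^\infty}+\|\nabla b\|_{L^\infty})\,\d\tau\le3M_\varepsilon\varepsilon^{-N-2}<\infty$. The standard BKM-type criterion (or Proposition \ref{prop:a_priori}, whose proof adapts to the resistive case because dissipation only helps) yields $t^*<T$ and $H^\infty$ regularity. Finally $Y_s\le Y_0^{1-s/k}Y_k^{s/k}\lesssim\mu^{-\gamma+s\gamma/k}\le C_{s,\varepsilon}\mu^{-\gamma/2}$ for $k\ge2s$, which is \eqref{w+beta-ineq3}.

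I expect the main obstacle to be verifying that the swirl component does not break the error accounting. The error $\overline E_{u,r}$ contains $\bar u_\theta^2/r$, and the $L^\infty$ and derivative norms of $\bar u$ are now as large as those of $\bar b$ in the ideal case. I would check that the bounds \eqref{error-r}-type still give $\varepsilon$-dependent multiples of $\mu^{k+1-2s}\nu^{3/2}$. I would also check that the Leibniz terms $Y_0\|\nabla^{k+1}\bar u\|_{L^\infty}$ retain the scaling $\mu^{k+1-2s}\nu^{3/2}$. Both hold because the time-amplification of derivatives is at most $\varepsilon^{-N}$ per derivative, exactly as in Part 1 of Proposition \ref{prop:approximation}.
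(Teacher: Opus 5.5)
Your proposal is correct and takes essentially the same route as the paper, which obtains this proposition by transplanting the ideal-case argument of Proposition \ref{prop:main_perturbation}. The shared ingredients are the bootstrap on $\|\nabla w\|_{L^\infty}+\|\nabla\beta\|_{L^\infty}$, the $Y_0$ and $Y_k$ Gronwall bounds of Lemma \ref{lem:Hk_non_viscous} (dropping the resistive term by its sign and treating $\overline{E}_{res}$ as a lower-order source), closure by interpolation, and a Beale--Kato--Majda criterion; one harmless slip is that $\|\nabla\bar b\|_{L^\infty}$ is of order $\mu^{1-M}\nu^{\frac12}$ up to $\varepsilon$-dependent constants, not $\mu^{2-M}\nu^{\frac12}$.
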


    By the definitions of $t^*$ and \eqref{s-non-viscous}, we have
    \begin{align*}
    t^*
    =\varepsilon^{-N-2}\mu^{s-\frac52+\frac{\gamma}{2}},
    \qquad
    s-\frac52+\frac{\gamma}{2}<0.
    \end{align*}
    Fix $0<\varepsilon\leq\varepsilon_0$. We then choose $\mu$ sufficiently
    large depending on $\varepsilon$ such that
    \begin{align}\label{parameter-non-viscous}
    \mu\gg\nu\gg\varepsilon^{-1}\gg1,
    \qquad
    0<t^*\leq\varepsilon,
    \qquad
    \mu^{-\frac{\gamma}{2}}\lesssim\varepsilon^2.
    \end{align}
    
    We now derive the conclusion for the non-viscous case. By Lemma \ref{lem:norm_inflation3} and Proposition \ref{prop:main_perturbation3}, 
    \[ \|u(t^\ast)\|_{H^s} \geq \|\bar u(t^\ast)\|_{H^s}-\|w(t^\ast)\|_{H^s} \geq \varepsilon^{-2}-1 \geq \varepsilon^{-1}, \] 
    for sufficiently large $\mu$. Hence the norm inflation occurs in the velocity field.

    Next, we evaluate the magnetic field in the space-time norms $L^\infty([0, t^*]; H^{s-1})\cap L^1([0, t^*]; H^{s+1})$. From \eqref{eq:approx_Wkp3}, for any $k \ge 0$, we have
    \begin{equation}\label{eq:b_bar_bound}
    \|\bar{b}\|_{L^\infty([0, t^*]; H^k(\R^3))} \le C_{k,\varepsilon} \mu^{k-M-1}.
    \end{equation}
    Moreover, Proposition \ref{prop:main_perturbation3} ensures that the difference in the magnetic field $\beta$ obeys 
    \begin{equation*}
    \|\beta\|_{L^\infty([0,t^*]; H^{s-1}(\R^3))}\le\|\beta\|_{L^\infty([0, t^*]; H^s(\R^3))} \le C_{s,\varepsilon}\mu^{-\frac{\gamma}{2}}.
    \end{equation*}

   Furthermore, we evaluate the space-time norms of $\beta$. By integrating the magnetic dissipation term $-\Delta \beta$ in the energy estimate underlying Lemma \ref{lem:Hk_non_viscous} over $[0, t^*]$, one extracts the space-time bounds
   \begin{equation}\label{eq:beta_L2H1_L2Hk1}
   \|\beta\|_{L^2([0,t^*]; H^1)} \lesssim \mu^{-s-\gamma}, \quad \text{and} \quad \|\beta\|_{L^2([0,t^*]; H^{k+1})} \lesssim \mu^{k-s}.
   \end{equation} 
   By \eqref{eq:beta_L2H1_L2Hk1} and interpolating, we deduce
   \begin{equation*}
   \|\beta\|_{L^2([0,t^*]; H^{s+1})} \le \|\beta\|_{L^2([0,t^*]; H^1)}^{1-\frac{s}{k}} \|\beta\|_{L^2([0,t^*]; H^{k+1})}^{\frac{s}{k}} \lesssim \left(\mu^{-s-\gamma}\right)^{1-\frac{s}{k}} \left(\mu^{k-s}\right)^{\frac{s}{k}} = \mu^{-\gamma + \frac{s\gamma}{k}},
   \end{equation*}
   from which it follows by applying the Cauchy-Schwarz inequality in time that
   \begin{align*}
   \|\beta\|_{L^1([0,t^*]; H^{s+1})} &\le (t^*)^{\frac{1}{2}} \|\beta\|_{L^2([0,t^*]; H^{s+1})} \\
   &\lesssim  \mu^{\frac{2s-5}{4} - \frac{3\gamma}{4} + \frac{s\gamma}{k}} \lesssim \mu^{\frac{2s-5}{4}} \lesssim\mu^{-\frac{\gamma}{2}}\lesssim\varepsilon^2,
   \end{align*}   
   where $k$ is chosen sufficiently large such that
   $\frac{s\gamma}{k}\leq\frac{3\gamma}{4}$, and we have used
   $\frac{2s-5}{4}=-50\gamma<-\frac{\gamma}{2}$.

   Since $M\geq10$, it follows from \eqref{parameter-non-viscous} and
   \eqref{eq:b_bar_bound} that
   \begin{align*}
   &\|\bar b\|_{L^\infty([0,t^*];H^{s-1})}
   \lesssim_{\varepsilon}\mu^{s-M-2}
   \lesssim_{\varepsilon}\mu^{-\frac{\gamma}{2}}
   \leq\varepsilon^2,\\
   &\|\bar b\|_{L^1([0,t^*];H^{s+1})}
   \leq t^*\|\bar b\|_{L^\infty([0,t^*];H^{s+1})}\leq\varepsilon^2.
   \end{align*}

   Finally, combining the above estimates and $b=\bar{b}+\beta$, we obtain
   \begin{equation*}
   \|b\|_{L^\infty([0,t^*]; H^{s-1})} \le \|\bar{b}\|_{L^\infty([0,t^*]; H^{s-1})} + \|\beta\|_{L^\infty([0,t^*]; H^{s-1})}\lesssim\varepsilon^2
   \end{equation*}
   and
   \begin{equation*}
   \|b\|_{L^1([0,t^*]; H^{s+1})} \le \|\bar{b}\|_{L^1([0,t^*]; H^{s+1})} + \|\beta\|_{L^1([0,t^*]; H^{s+1})}\lesssim\varepsilon^2.
   \end{equation*}
   In conclusion, we have
   \begin{equation*}
   \begin{dcases}
   \|u(t^*)\|_{H^s(\mathbb R^3)}
   \geq \frac{1}{\varepsilon},\\
   \|b\|_{L^\infty([0,t^*];H^{s-1}(\R^3))
   	\cap L^1([0,t^*];H^{s+1}(\R^3))}
   \leq \varepsilon,
   \end{dcases}
   \qquad 0<t^*\leq\varepsilon.
   \end{equation*}
   This completes the proof in the non-viscous case.

    \begin{remark}
    	We explain why the Magnetic-solo ansatz used in the ideal and non-resistive cases fails here. Indeed, if the velocity and magnetic fields were chosen with the same amplitudes as in the ideal construction, then, on the time interval $[0,t^*]$ with $0<t^*\leq\varepsilon$,
    	\begin{align*}
    	\|\bar{u} \cdot \nabla \bar{b}\|_{L^1([0,t^*];H^{s-1})} + \|\bar{b} \cdot \nabla \bar{u}\|_{L^1([0,t^*];H^{s-1})} \le C_{\varepsilon}\mu^{-1}.
    	\end{align*}
    	By choosing $\mu$ sufficiently large, the two convection terms in the magnetic equation remain small and cannot produce magnetic norm inflation. We therefore suppress the magnetic field and instead obtain norm inflation in the velocity field.
    \end{remark}

    \subsection{The viscous and resistive case: $H^s \times H^s$ for $0 < s < \frac{1}{2}$}
    For the viscous and resistive MHD equations, the regularity spaces are matched. Thus we return to the same initial profiles used for the ideal case:
    \begin{equation*}
    \begin{cases}
    u_{0,\theta}(z,r) =0 \\
    u_{0,r}(z,r) = -\varepsilon^2 \mu^{1 - s}\nu^{\frac{1}{2}}f_u'(\mu \rho)\partial_z\rho \\
    u_{0,z}(z,r) = \varepsilon^2 \mu^{1 - s}\nu^{\frac{1}{2}}f_u'(\mu \rho)\partial_r\rho \\
    \end{cases}
    \end{equation*}
    and
    \begin{equation*}
    \begin{cases}
    b_{0,\theta}(z,r) = \varepsilon^2 \mu^{1 - s}\nu^{\frac{1}{2}}g_b(\mu \rho)\sin (\phi) \\
    b_{0,r}(z,r) = 0 \\
    b_{0,z}(z,r) = 0
    \end{cases}
    \end{equation*}
    where $f_u$ and $g_b$ are as chosen in \eqref{suppfg}.
    
    For $0<s<\frac{1}{2}$, we adjust again the parameter $\gamma$ as follows: 
    \begin{equation*}\begin{cases}
    \gamma=\frac{\frac{1}{2}-s}{100}>0,\\
    \nu = \mu^{1-\gamma}.
    \end{cases}
    \end{equation*}

    Due to the dissipative terms $-\Delta \bar{u}$ and $-\Delta \bar{b}$ , the left-hand side of the energy inequality \eqref{eq:Yk_ODE} gains $+ \|\nabla^{k+1} w\|_{L^2}^2 + \|\nabla^{k+1} \beta\|_{L^2}^2$. The same calculation as in Section \ref{sec-pertur} shows that the errors generated by the added dissipation over the short time interval $[0,t^*]$ are compatible with the previous estimates in \eqref{error_L2}. Hence, the dissipation remains negligible before the norm inflation develops.

    Arguing as in Sections \ref{sec-proof} and \ref{subsec-non-resistive case}, and choosing $\mu$ sufficiently
    large depending on $\varepsilon$, we obtain
    \[
    \|w\|_{L^\infty([0,t^*];H^s)}
    +\|\beta\|_{L^\infty([0,t^*];H^s)}
    \lesssim \varepsilon^2,
    \]
    together with
    \[
    \|u\|_{L^\infty([0,t^*];H^s)
    	\cap L^2([0,t^*];H^{s+1})}
    \leq \varepsilon.
    \]
    Consequently,
    \begin{equation*}
    \begin{cases}
    \displaystyle
    \|b(t^*)\|_{H^s(\mathbb R^3)}
    \geq \dfrac{1}{\varepsilon},\\[2mm]
    \displaystyle
    \|u\|_{L^\infty([0,t^*];H^s(\mathbb R^3))
    	\cap L^2([0,t^*];H^{s+1}(\mathbb R^3))}
    \leq \varepsilon,
    \end{cases}
    \qquad 0<t^*\leq\varepsilon.
    \end{equation*}
    This completes the proof in the viscous and resistive case and
    concludes the proof of Theorem \ref{th-ill-mhd-unified}.\\

    \section*{Acknowledgments}
    W. Ye was supported by the Science and Technology Planning Project of Guangzhou (Grant No. 2025A04J0002) and the National Natural Science Foundation of China (No. 12401277, 12171493, 12271051 and 12371095). Z. Yin was partially supported by the National Natural Science Foundation of China (No.12571261).

    \section*{Declarations}
    \begin{itemize}
    	\item Competing interests: The authors declare no competing interests.
    	\item Data availability: No datasets were generated or analysed during the current study.
    	\item Author contribution: All authors contributed to writing and reviewing the manuscript.
    \end{itemize}

\end{document}